\documentclass[final,11pt]{paper}
\usepackage[utf8]{inputenc}
\newcommand{\dx}{ {~\rm d}x}
\newcommand{\dt}{ {~\rm d}t}
\renewcommand{\d}{{~\rm d}\!}
\usepackage{amsmath}
\usepackage{amssymb,color}
\usepackage[color,notref,notcite]{showkeys}
\usepackage{tikz}
\definecolor{labelkey}{rgb}{0.6,0,1}

\setlength{\textwidth}{16cm}
\setlength{\textheight}{22cm}
\setlength{\topmargin}{-1cm}
\setlength{\oddsidemargin}{0cm}
\setlength{\evensidemargin}{0cm}
\parindent=0pt

\def\epsilon{\varepsilon}

\def\R{{\mathbb R}}
\def\Div{\mathop{\rm div}\nolimits}

\usepackage[normalem]{ulem}
\normalem
\newcounter{corr}
\definecolor{violet}{rgb}{0.580,0.,0.827}
\newcommand{\corr}[3]{\typeout{Warning : a correction remains in page
\thepage}
				\stepcounter{corr}        
				{\color{blue}\ifmmode\text{\,\sout{\ensuremath{#1}}\,}\else\sout{#1}\fi}
       {\color{red}#2}
       {\color{violet} #3}}
\usepackage{graphics}

\newtheorem{theorem}{Theorem}[section]
\newtheorem{lemma}[theorem]{Lemma}

\newtheorem{remark}[theorem]{Remark}
\newtheorem{definition}[theorem]{Definition}

\usepackage{constants}
\newconstantfamily{ctrcst}{symbol=C,reset={chapter}}
\def\ctel#1{\ensuremath{\Cl[ctrcst]{#1}}}
\def\cter#1{\ensuremath{\Cr{#1}}}

\newcommand{\eop}{{\unskip\nobreak\hfil\penalty50
           \hskip2em\hbox{}\nobreak\hfil\mbox{\rule{1ex}{1ex} \qquad}
   \parfillskip=0pt
   \finalhyphendemerits=0\par\medskip}}
\newenvironment{proof}[1][]{\noindent {\bf Proof#1. } }{\eop}

\begin{document}

\title{Weighted $p-$Laplace approximation of linear and quasi-linear elliptic problems with measure data}

\author{Robert Eymard, David Maltese and Alain Prignet\thanks{LAMA, Univ. Gustave Eiffel, Univ. Paris Est Créteil, CNRS, F-77454 Marne-la-Vallée, France, {\tt robert.eymard, david.maltese, alain.prignet@univ-eiffel.fr}}}

\maketitle

\abstract{We approximate the solution to some linear and 
degenerate quasi-linear problem involving a linear elliptic operator
(like the semi-discrete in time implicit Euler approximation of Richards and Stefan
equations)
with measure right-hand
side and heterogeneous anisotropic diffusion matrix.
This approximation is obtained through the addition of a weighted $p-$Laplace term. A well chosen diffeomorphism between $\R$ and $(-1,1)$ is used for the estimates of the approximated solution, and is involved in the above weight. We show that this approximation converges to a weak sense of the problem for general right-hand-side, and to the entropy solution in the case where the right-hand-side is in $L^1$.  }

\section{Introduction}

This paper is focused on the approximation of a solution
of second order linear and quasilinear elliptic equations in divergence form with
coefficients in $L^\infty(\Omega)$
($\Omega \subset \R^N,\ N\in\mathbb{N},\,N\ge 2$ is an
open bounded subset)
and measure data. The obtained result provides the existence in the quasilinear case, and a uniquess result is also given for $L^1$ right-hand side. The linear problem is to find a  measurable
function $u$ defined on $\Omega$ such that, in some senses which will be given
below, the following holds:
\begin{equation}\label{eq:lin:probcont}
- \Div ( \Lambda   \nabla u) = f  ~\text{in}~\Omega,
\end{equation}
together with the homogeneous Dirichlet boundary condition
\begin{equation}\label{eq:probbound}
u= 0~\text{on}~ \partial \Omega.
\end{equation}
The quasilinear problem consists in finding a pair of measurable functions $(b,u)$ such that the following relations hold:
\begin{equation}\label{eq:probcont}
b - \text{div} ( \Lambda   \nabla u) = f  ~\text{in}~\Omega,
\end{equation}
completed by the following relation:
\begin{equation}\label{eq:probcontbetazeta}
\hbox{There exists }v\hbox{ measurable on }\Omega\hbox{ such that }b = \beta(v) \hbox{ and } u = \zeta(  v ) ~\text{ a.e. in}~\Omega,
\end{equation}
where $\beta$ and $\zeta$ are nonstrictly increasing functions (precise assumptions on these functions are given by \eqref{eq:hypbetazeta} in Section \ref{sec:nonlinear}). The quasilinear framework includes a semi-discrete in time version of some degenerate equations such as the Richards or the Stefan equations, as precised in Section  \ref{sec:nonlinear}.
The quasilinear problem is supplemented with the boundary condition \eqref{eq:probbound}.

The following assumptions are made on the data $\Lambda$, $f$.
\begin{subequations}
\begin{align}\bullet~ & \Lambda \in L^\infty(\Omega)^{ N \times N} \mbox{ is symmetric and  there exists $\underline{\lambda},\overline{\lambda}>0$ such that, for a.e. $x\in\Omega$,}\nonumber\\
&\mbox{ and, for all $\xi\in {\R}^N$, $\underline{\lambda} |\xi|^2\le  \Lambda(x)\xi\cdot\xi\le \overline{\lambda} |\xi|^2$,}\label{hyplambda}\\
\bullet~  & f \in M(\Omega). \label{eq:hypfmesure}
\end{align}
\label{eq:hypomlambf}
\end{subequations}
In \eqref{eq:hypfmesure}, $M(\Omega)$ denotes the Radon measures set,
defined as the dual space of the continuous functions on
$\overline{\Omega}$ with its classical norm.
Note that, in the case
$N=1$, there holds $M(\Omega)\subset H^{-1}(\Omega)$; then these problems enter into
the framework of \cite{dro2020high}; we therefore consider here only $N\ge 2$. 
We could consider as well the case where a term $\text{div} F$ with $F\in L^2(\Omega)^N$ is added to $f$, since the same results as those obtained in this paper also hold in this case.

\medskip

Let us recall a few results concerning the linear problem \eqref{eq:lin:probcont}.
\begin{itemize}
 \item The existence of a weak solution in the sense of Definition \ref{def:weaksol} for any $N\ge 2$ is given in \cite{stam1965pro} (details of this result are given in \cite{pri1995rem}).
 \item Its uniqueness is proved for $N=2$ in \cite{gal1994exi} for general diffusion fields: the proof relies on a regularity result \cite{meyers1963est} which holds on domains $\Omega$ with $C^2$ boundary, extended in \cite{gal1999reg} to all domains with Lipschitz boundaries.
 \item In the case $N\ge 3$, this uniqueness result remains true if $\Lambda$ is regular enough to apply the arguments of  Agmon, Douglis and Nirenberg in the duality proof provided by \cite{stam1965pro}, but it is no longer true for general diffusion fields: indeed, in \cite{pri1995rem}, it is shown that, for a particular diffusion field $\Lambda$ inspired by \cite{ser1964path} (see \cite{pri1995rem} for more details), there exist infinitely many non-zero weak solutions $u$ to Problem  \eqref{eq:lin:probcont}-\eqref{eq:probbound} (in the sense of Definition \ref{def:weaksol}) for $N=3$, even with $f\equiv 0$. 
\end{itemize}

As in \cite{bg1989nonlin,boc1992nonlinear}, we consider solutions which are limit of sequences of regularised problems. Such solutions can be characterised by adding conditions in the definition of a weak solution, when the right-hand side is in $L^1(\Omega)$, and a uniqueness result can be proved. This is done by the notion of entropy weak sense \cite{ben1995theory}, explored by several approaches in the literature (among them renormalised solutions by Lions and Murat, see \cite{dal1999ren}).


This sense is provided by Definition \ref{def:lin:entsol} in the linear case, and a straightforward adaptation in the quasilinear case is given by Definition \ref{def:qlentsol}.


\medskip

The proofs of the existence of a solution in a weak sense and in an entropy weak sense are done in this paper in a different way from \cite{bg1989nonlin,boc1992nonlinear,ben1995theory}, where the existence is obtained through the regularization of the right-hand side. Here we keep the right-hand-side measure unchanged, hence remaining in $(W^{1,p}_0(\Omega))'$ for any $p>N$. A natural idea would be to add a vanishing $p-$Laplace regularisation term (we show below in Section \ref{sec:motiv} that we need in fact a weighted one). The existence of a solution to the regularised problem will then be obtained through the use of a fix-point method. 
As in \cite{BGV}, a diffeomorphism between $\mathbb{R}$ and an open bounded interval is used for deriving estimates.
We follow a similar technique to \cite{eym2021ell}, consisting in using the diffeomorphism $\psi~:~\R\to (-1,1)$, defined by
\begin{align} \label{eq:def:psi }
  &\begin{array}{l| l} \displaystyle
    \psi : \quad & \quad \R \longrightarrow (-1,1)
		\\ [1ex] & \displaystyle \quad s \longmapsto\frac {\ln(1+|s|)} {1+\ln(1+|s|)}{\rm sign}(s),
  \end{array}
  \end{align}
  where ${\rm sign}(s)=1$ if $s\ge 0$ and $-1$ if $s<0$. Note that $\psi$ is an odd strictly increasing function such that $\psi'(s)\in(0,1]$ for all $s\in \R$.

 The advantage of this diffeomorphism over the one used in \cite{BGV} is that it does not introduce a supplementary parameter which must vary in order that the weak sense be fully satisfied.
 
 We show in Section \ref{sec:motiv} why it seems necessary to introduce a weighted dependence with respect to this function in the $p-$Laplace stabilisation term. 
 We then study this regularised problem in Section \ref{sec:regpb}, where we show the existence of a solution using Schaefer's theorem (which is a variant of the Leray-Schauder fix-point method). We then show some estimates on a solution to this regularised problem, enabling a convergence proof to the weak and entropy weak solutions of the linear problem (see Section \ref{sec:linear}). Similar proofs are then derived in the case of the quasilinear problem  in Section \ref{sec:nonlinear}, in which the nonlinear dependence between $b$ and $u$ is handled through Minty's trick.

 \pagebreak
 
{\bf Notes applying to the whole paper:}
\begin{itemize}
\item We fix a given value $p\in (N,+\infty)$ for the whole paper, which implies that $W^{1,p}_0(\Omega) \subset C(\overline{\Omega})\subset L^\infty(\Omega)$ and by duality, $M(\Omega)\subset (W^{1,p}_0(\Omega))'$.
 \item We denote for short $\Vert\cdot\Vert_p$ instead of
$\Vert\cdot\Vert_{L^p(\Omega)}$ or $\Vert\cdot\Vert_{L^p(\Omega)^N}$.
\item We use, for a.e. $x\in\Omega$, $\Lambda(x)$ as a linear operator from $\mathbb{R}^N$ to $\mathbb{R}^N$, which applies to 
the element of $\mathbb{R}^N$ which immediately follows.
\item We use a few times Sobolev inequalities \cite{adams1975sob}: we denote $C_{{\rm sob}}^{(r,q)}$, also depending on $N$ and $|\Omega|$, such that
\begin{equation}\label{eq:sobolev}
\| u \|_q \le C_{{\rm sob}}^{(r,q)} \| \nabla u \|_r,\mbox{ for any }u \in W^{1,r}_0(\Omega),
\end{equation}
for any $r\in [1,+\infty)$ and $ q \in  [1,\frac{r N}{N-r} ]$ if $r < N$,  $ q \in [1,+\infty)$ if $r=N$ and $ q \in [1,+\infty]$ if $r>N$.
\end{itemize}

\section{Motivation for the definition of the regularised problem}\label{sec:motiv}

This section aims to motivate the choice of the nonlinear weight $\alpha$ introduced in the vanishing $p-$Laplacian term:
\[
 - \Div ( \Lambda \nabla u_{\epsilon} +  \epsilon\  \alpha(u_{\epsilon}) |\nabla u_{\epsilon}|^{p-2}\nabla u_{\epsilon}) = f.
\]
In the preceding equation, $\epsilon>0$ is meant to tend to zero, and $\alpha$ is a positive function to be chosen such that we can prove the following properties:
\begin{itemize}
\item there exists at least one solution $u_\epsilon$ to the regularised problem;
 \item using the function $\psi (u_\epsilon)$ as a test function where $\psi$ is defined by \eqref{eq:def:psi }, we can derive estimates on  $u_\epsilon$ independent of $\epsilon$, enabling to prove that any limit of $u_\epsilon$ as $\epsilon\to 0$ belong to the functional spaces containing the solutions to the weak or entropy weak sense of the problem;
 \item the vanishing term indeed vanishes as $\epsilon$ tends to $0$.
\end{itemize}
 
Let us consider $\psi (u_\epsilon)$ as a test function in the regularised problem.
Using the positivity of the term $ \epsilon\  \alpha(u_{\epsilon}) |\nabla u_{\epsilon}|^{p-2}\nabla u_{\epsilon}\cdot\nabla\psi (u_\epsilon)$, we get in a similar way to \cite{BGV} the following result: 
\[
 \Vert \nabla u_{\epsilon}\Vert_q \le C,\hbox{ forall }q\in(1,\frac N {N-1}),
\]
and therefore, from Sobolev inequalities, that
\[
 \Vert u_{\epsilon}\Vert_{\hat q} \le C,\hbox{ forall }{\hat q}\in I_N,
\]
where $I_N = (1,\frac N {N-2})$ if $N>2$ and $I_N = (1,+\infty)$ if $N=2$.
The function $\psi$ defined by \eqref{eq:def:psi } enables in particular the following estimate:
\begin{equation}\label{eq:estpsiprimeun}
 \left\Vert \frac {1} {\psi' (u_{\epsilon})}\right\Vert_{\hat q} \le C.
\end{equation}
We also obtain the following inequality
$$
\epsilon \int_\Omega \alpha(u_\epsilon)\psi' (u_\epsilon)|\nabla u_\epsilon|^{p} \dx \le C.
$$
Using the preceding inequalities must be sufficient to prove that, for any function $w\in C^\infty_c(\Omega)$,
$$
\lim_{\epsilon\to 0} \epsilon \int_\Omega \alpha(u_\epsilon)|\nabla u_\epsilon|^{p-2} \nabla u_\epsilon \cdot \nabla w \dx = 0.
$$
Using a H\"older inequality, we can show that
\begin{multline*}
\epsilon \left|\int_\Omega \alpha(u_\epsilon)|\nabla u_\epsilon|^{p-2} \nabla u_\epsilon \cdot \nabla w \dx\right| 
\\
\le \Vert \nabla w\Vert_\infty ~ \epsilon \left(\int_\Omega \alpha(u_\epsilon)\psi' (u_\epsilon) |\nabla u_\epsilon|^{p} \dx\right)^{\frac {p-1} p}\left(\int_\Omega  \frac {\alpha(u_\epsilon)}{\psi' (u_\epsilon)^{p-1}} \dx\right)^{\frac {1} p}
\\ \le  C  ~ \epsilon^{\frac {1} p} \left(\int_\Omega \frac {\alpha(u_\epsilon)}{\psi' (u_\epsilon)^{p-1}}  \dx\right)^{\frac {1} p}.
\end{multline*}
We now need to bound the final integral in the right hand side independently of $\epsilon$. Using \eqref{eq:estpsiprimeun}, it is sufficient to choose $\alpha$ such that
\[
\frac {\alpha(u_\epsilon)}{\psi' (u_\epsilon)^{p-1}}  \le  \frac 1 {\psi' (u_\epsilon)^{\hat q}},
\]
for some ${\hat q} < \frac {N}{N-2}$ (case $N\ge 3$). Since $\psi'(s)\in(0,1]$, taking  $\alpha(u_\epsilon) =  \psi' (u_\epsilon)^{r}$, for some $r>p-1-\frac {N}{N-2}$ satisfies this inequality. 

The question which then arises is the possibility to prove the existence of $u_\epsilon$. The existence proof in Section \ref{lin:sec:probreg} relies on the existence of $\tilde u_{\epsilon}\in W^{1,p}_0(\Omega)$ such that
\[
 \psi' (u_\epsilon)^{r}(u_{\epsilon}) |\nabla u_{\epsilon}|^{p-2}\nabla u_{\epsilon} = |\nabla \tilde u_{\epsilon}|^{p-2}\nabla \tilde u_{\epsilon},
\]
which yields the change of variable $\tilde u_{\epsilon} = \psi_r(u_{\epsilon})$ with $\psi_r(s) =\int_0^s 
\psi'(t)^{\frac{r}{p-1}} \dt$.  We then recover $u_{\epsilon}$ using the reciprocal function $(\psi_r)^{-1}$ of $\psi_r$, which requires that the domain of  $(\psi_r)^{-1}$  be equal to $\mathbb{R}$, and therefore that the image of $\psi_r$ be  $\mathbb{R}$. This does not hold for $r\ge p-1$: indeed, since for any $s\in\mathbb{R}$, $0<\psi'(s)\le 1$, there holds $0 < \psi'(s)^{\frac{r}{p-1}}\le \psi'(s)$ which yields $|\psi_r(s)|\le |\psi(s)|< 1$, for such $r$, the image of $\psi_r$ cannot be equal to $\mathbb{R}$.  Hence we have to choose $r\in (p-1-\frac {N}{N-2},p-1)$. We choose the value $r= p-2$, whose advantage is to be independent of $N$, and to lead to simpler expressions.

In consequence, the weighting function chosen in the remaining part of this paper is defined by 
\begin{equation*}
 \forall s\in \R,\ \alpha(s) =  (\psi'(s))^{p-2},
\end{equation*}
and we have $ \alpha(u_{\epsilon}) |\nabla u_{\epsilon}|^{p-2} =  |\nabla \psi(u_{\epsilon})|^{p-2}$.

In the next sections, we prove the existence of $u_\epsilon$, some
estimates on this function, and the convergence of 
$u_\epsilon$ to a weak or entropy
weak solution of the linear or quasilinear problems as $\epsilon\to 0$.

\section{Study of the regularised problem}\label{sec:regpb}

In the whole section, $\epsilon >0$ is given.

In view of Section \ref{sec:nonlinear}, we introduce a nonstrictly increasing function $\mu_\epsilon\in C(\R)$ such that $\mu_\epsilon(0) = 0$, which covers the case $\mu_\epsilon \equiv 0$ used in the linear case. As a consequence of Section \ref{sec:motiv}, we consider the following problem
\begin{equation}\label{lin:probconteps}
 \mu_\epsilon(u_{\epsilon})- \Div ( \Lambda \nabla u_{\epsilon} +  \epsilon  |\nabla \psi(u_{\epsilon})|^{p-2}\nabla u_{\epsilon}) = f,
\end{equation}
together with homogeneous Dirichlet boundary conditions
\begin{equation}\label{lin:probboundeps}
u_{\epsilon } = 0~\text{on}~\partial \Omega.
\end{equation}

\subsection{Existence of a solution to the regularised problem}\label{lin:sec:probreg}

\begin{lemma}[Existence of a weak solution to Problem \eqref{lin:probconteps}-\eqref{lin:probboundeps}]\label{lem:lin:existolreg}

There exists  a function $u_\epsilon$ such that 
\begin{multline}\label{lin:pbuepsilonreg}
u_\epsilon \in W^{1,p}_0(\Omega)
\hbox{ and for any }w \in W^{1,p}_0(\Omega),\\
\int_\Omega (\mu_\epsilon(u_\epsilon) w + \Lambda \nabla u_\epsilon \cdot \nabla w) \dx
+ \epsilon \int_\Omega |\nabla \psi(u_{\epsilon})|^{p-2} \nabla u_\epsilon \cdot \nabla w \dx 
= \int_\Omega w \d f.
\end{multline}
\end{lemma}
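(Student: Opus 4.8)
The plan is to fix $\epsilon>0$ and combine the change of variable suggested in Section \ref{sec:motiv} with Schaefer's theorem. The pivotal device is $\psi_{p-2}(s)=\int_0^s\psi'(t)^{(p-2)/(p-1)}\dt$: since $0<\psi'\le1$ and the exponent $(p-2)/(p-1)<1$ is $<1$, $\psi_{p-2}$ is an odd increasing $C^1$ diffeomorphism of $\R$ \emph{onto} $\R$ with $0<\psi_{p-2}'\le1$, and it satisfies $|\nabla\tilde u|^{p-2}\nabla\tilde u=|\nabla\psi(u)|^{p-2}\nabla u$ for $\tilde u=\psi_{p-2}(u)$, as noted in Section \ref{sec:motiv}. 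Because $p>N$ gives $W^{1,p}_0(\Omega)\subset C(\overline\Omega)\subset L^\infty(\Omega)$, the inverse $\psi_{p-2}^{-1}$, whose derivative is unbounded, is only ever evaluated at bounded arguments, so $u\in W^{1,p}_0(\Omega)$ if and only if $\tilde u\in W^{1,p}_0(\Omega)$; I would use this freely to pass between $u$ and $\tilde u$.

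To set up the fixed point I would freeze the two ingredients that obstruct direct monotone-operator theory: the zeroth-order term $\mu_\epsilon$ (of uncontrolled growth) and the degenerate weight $\psi'(\cdot)^{p-2}$. Given $\bar u\in C(\overline\Omega)$, set $\omega=\psi'(\bar u)^{p-2}\in L^\infty(\Omega)$; since $\psi'$ is decreasing in $|s|$, one has $\omega\ge\psi'(\|\bar u\|_\infty)^{p-2}>0$. Define $T(\bar u)=u$ as the weak solution in $W^{1,p}_0(\Omega)$ of $-\Div(\Lambda\nabla u+\epsilon\,\omega|\nabla u|^{p-2}\nabla u)=f-\mu_\epsilon(\bar u)$. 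The operator $u\mapsto-\Div(\Lambda\nabla u+\epsilon\,\omega|\nabla u|^{p-2}\nabla u)$ from $W^{1,p}_0(\Omega)$ to its dual is, by \eqref{hyplambda}, the positivity of $\omega$, and the strict monotonicity of the $p$-Laplace vector field, bounded, hemicontinuous, strictly monotone, and coercive (its pairing with $u$ dominates $\epsilon\,\psi'(\|\bar u\|_\infty)^{p-2}\|\nabla u\|_p^p$); the datum $f-\mu_\epsilon(\bar u)$ lies in $(W^{1,p}_0(\Omega))'$ because $f\in M(\Omega)\subset(W^{1,p}_0(\Omega))'$ and $\mu_\epsilon(\bar u)\in L^\infty(\Omega)$. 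Browder--Minty then gives a unique $u=T(\bar u)$, and any fixed point $u=T(u)$ has $\omega=\psi'(u)^{p-2}$, hence solves \eqref{lin:pbuepsilonreg}.

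It then remains to check Schaefer's hypotheses on $X=C(\overline\Omega)$. Continuity and compactness of $T$ rest on the compact embedding $W^{1,p}_0(\Omega)\hookrightarrow\hookrightarrow C(\overline\Omega)$: on a ball $\{\|\bar u\|_\infty\le R\}$ the frozen data are uniformly bounded and $\omega$ is uniformly bounded below by $\psi'(R)^{p-2}$, so coercivity bounds $T(\bar u)$ in $W^{1,p}_0(\Omega)$ uniformly, whence $T$ sends bounded sets to relatively compact ones; continuity follows from the stable dependence of the solution of a strictly monotone problem when $\psi'(\bar u_n)^{p-2}\to\psi'(\bar u)^{p-2}$ and $\mu_\epsilon(\bar u_n)\to\mu_\epsilon(\bar u)$ uniformly. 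For the a priori bound, any $u$ in the Leray--Schauder homotopy set solves $-\Div(\Lambda\nabla u+\epsilon\,\psi'(u)^{p-2}|\nabla u|^{p-2}\nabla u)=\lambda(f-\mu_\epsilon(u))$ for some $\lambda\in[0,1]$; testing with $u$ and discarding the nonnegative terms $\int_\Omega\Lambda\nabla u\cdot\nabla u\dx$ and $\int_\Omega\mu_\epsilon(u)u\dx$ (the latter nonnegative because $\mu_\epsilon$ increases through $\mu_\epsilon(0)=0$), then passing to $\tilde u=\psi_{p-2}(u)$ via $\int_\Omega\psi'(u)^{p-2}|\nabla u|^p\dx=\int_\Omega(\psi_{p-2}^{-1})'(\tilde u)|\nabla\tilde u|^p\dx\ge\|\nabla\tilde u\|_p^p$, yields $\epsilon\|\nabla\tilde u\|_p^p\le|\langle f,u\rangle|\le\|f\|_{M(\Omega)}\|u\|_\infty$.

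The final and most delicate point is to close this inequality, and here the choice $r=p-2$ is decisive. Bounding $\|\tilde u\|_\infty\le C_{\rm sob}^{(p,\infty)}\|\nabla\tilde u\|_p$ (Morrey, $p>N$) and using that, from the explicit form of $\psi$, $\psi_{p-2}^{-1}$ grows at most like $t\mapsto t^{p-1}$ up to logarithmic factors, one gets $\|u\|_\infty=\|\psi_{p-2}^{-1}(\tilde u)\|_\infty\lesssim\|\nabla\tilde u\|_p^{p-1}$ (up to logs), so that $\epsilon\|\nabla\tilde u\|_p^p\lesssim\|f\|_{M(\Omega)}\|\nabla\tilde u\|_p^{p-1}$ forces an $\epsilon$-dependent bound on $\|\nabla\tilde u\|_p$, hence on $\|u\|_\infty$, uniformly in $\lambda$. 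Schaefer's theorem then produces a fixed point $u_\epsilon$, the desired solution. The main obstacle throughout is the degeneracy of the weight $\psi'(u)^{p-2}$, which vanishes where $u$ is large and ruins any uniform ellipticity of the stabilising term in the variable $u$; the change of variable $\tilde u=\psi_{p-2}(u)$ is exactly what restores a genuine coercive $p$-Laplacian, and the matching between the degree-$(p-1)$ growth of $\psi_{p-2}^{-1}$ and the $p$-th power coercivity is what makes the a priori estimate close.
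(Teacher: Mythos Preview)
Your argument is correct and reaches the goal, but it is organised differently from the paper's. The paper performs the change of variable $\tilde u=\psi_{p-2}(u)$ \emph{before} setting up the fixed point: the frozen problem is written entirely in the $\tilde u$ variable, the $p$-Laplace term becomes the plain $p$-Laplacian, and the linear diffusion acquires the bounded weight $(\psi_{p-2}^{-1})'(\tilde v)\ge 1$; the frozen problem is then solved by minimisation of a strictly convex functional (Lemma~\ref{lem:minstcvxe}). The payoff is that the Schaefer a~priori bound becomes a one-liner: for $tF(\tilde u)=\tilde u$ one tests with $\tilde u$, discards the two nonnegative terms, and obtains directly $\epsilon\Vert\nabla\tilde u\Vert_p^{p-1}\le C_{\rm sob}^{(p,\infty)}\Vert f\Vert_{M(\Omega)}$, with no growth analysis of $\psi_{p-2}^{-1}$ required.

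By contrast, you keep the fixed point in the $u$ variable (on $C(\overline\Omega)$) and freeze the degenerate weight $\psi'(\bar u)^{p-2}$. This has the advantage that the frozen problem is a textbook Browder--Minty application, so you avoid the auxiliary minimisation lemma. The cost shows up in the a~priori estimate: testing with $u$ produces $\Vert f\Vert_{M(\Omega)}\Vert u\Vert_\infty$ on the right, and you must then invoke the growth $\psi_{p-2}^{-1}(r)\lesssim r^{p-1}(\log r)^C$ to close, which does work (since $p>p-1$) but is more delicate than the paper's route. Note also that testing the fixed-point equation with $\tilde u=\psi_{p-2}(u)$ instead of $u$ would recover the paper's clean bound even within your setup, since $\psi_{p-2}(u)\,\mu_\epsilon(u)\ge 0$ and $\psi_{p-2}'(u)\Lambda\nabla u\cdot\nabla u\ge 0$.
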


\begin{proof}

{\bf Step 1: change of variable.}

\medskip

We define the odd, strictly increasing diffeomorphism  $\psi_p~:~\mathbb{R}\to \mathbb{R}$ by
$$
\forall s\in\mathbb{R},\ \psi_p(s)=\int_0^s 
\psi'(t)^{\frac{p-2}{p-1}} \dt.
$$
Remarking that, for any $\tau\in(0,2)$, the minimum value of the function $s\mapsto (1+|s|)^{1+\tau}\psi'(s)$ is attained when $1+\ln(1+|s|) = \frac 2 \tau$, we get
\begin{equation}\label{eq:boundpsiprime}
\forall \tau\in (0,2),\  \forall s\in \mathbb{R},\ \frac {\tau^2} {4(1+|s|)^{1+\tau}}\le \psi' (s) =\frac 1 {(1+\ln(1+|s|))^2(1+|s|)} \le\frac 1 { 1+|s|}.
\end{equation}
This leads, for any $t\in (0,\frac 1 {p-1}]$, to
\[
 \forall s\in [0,+\infty),\ \frac {(1-t(p-1))^2}{4t (p-2)^2} ( (1+s)^{t}-1)\le \psi_p(s)\le  (p-1)((1+s)^{\frac 1 {p-1}}-1),
\]
which shows that the image $\psi_p$ is equal to $\mathbb{R}$.

In this proof, we are looking for the existence of $u_\epsilon$ solution to \eqref{lin:pbuepsilonreg}. For this purpose, we introduce the change of variable, which enables to solve by minimisation a $p-$Laplace problem without weight,
\begin{equation}\label{eq:chofvar}\tilde u_\epsilon=
\psi_p(u_\epsilon).
\end{equation}
This means that $u_\epsilon=\psi_p^{-1}(\tilde u_\epsilon)$, which can only be written using that the range of $\psi_p$ is equal to $\mathbb{R}$. It leads to $\nabla u_\epsilon = \nabla\psi_p^{-1}(\tilde u_\epsilon) = (\psi_p^{-1})'({\tilde u}_\epsilon)\nabla\tilde u_\epsilon$. Since $(\psi_p^{-1})'$ is continuous , for any ${\tilde u}_\epsilon\in W^{1,p}_0(\Omega)\subset L^\infty(\Omega)$, we get that $(\psi_p^{-1})'({\tilde u}_\epsilon)$ remains bounded, which implies that $u_\epsilon\in W^{1,p}_0(\Omega)$.

Besides, we can also write
\[
 |\nabla \psi(u_\epsilon)|^{p-2} \nabla u_\epsilon = (\psi'(u_\epsilon))^{p-2} |\nabla u_\epsilon|^{p-2} \nabla u_\epsilon = (\psi_p'(u_\epsilon))^{p-1} |\nabla u_\epsilon|^{p-2} \nabla u_\epsilon= |\nabla\tilde  u_\epsilon|^{p-2} \nabla\tilde  u_\epsilon,
\]
and $\mu_\epsilon(u_\epsilon)=\mu_\epsilon(\psi_p^{-1}({\tilde u}_\epsilon))$ and $\Lambda \nabla u_\epsilon = (\psi_p^{-1})'({\tilde u}_\epsilon)\Lambda \nabla {\tilde u}_\epsilon$.

Hence Problem \eqref{lin:pbuepsilonreg} is equivalent to find  $\tilde  u_\epsilon\in W^{1,p}_0(\Omega)$ such that
\begin{equation}\label{lin:pbepsilonbetap}
\int_\Omega (\mu_\epsilon(\psi_p^{-1}({\tilde u}_\epsilon)) w +
(\psi_p^{-1})'({\tilde u}_\epsilon) \Lambda 
\nabla {\tilde u}_\epsilon \cdot \nabla w) \dx
+ \epsilon \int_\Omega
|\nabla {\tilde u}_\epsilon|^{p-2} \nabla {\tilde u}_\epsilon \cdot \nabla w \dx 
= \int_\Omega w \d f, ~\text{for any}~w \in W^{1,p}_0(\Omega).
\end{equation}

\medskip

{\bf Step 2: existence of ${\tilde u}_\epsilon$.}

\medskip

In order to prove the existence of   $\tilde  u_\epsilon\in W^{1,p}_0(\Omega)$ such that \eqref{lin:pbepsilonbetap} holds, we remark that such a solution satisfies $\tilde  u_\epsilon = F(\tilde  u_\epsilon)$, where the mapping $F :  W^{1,p}_0(\Omega)
\longrightarrow W^{1,p}_0(\Omega)$ is such that, for any ${\tilde v} \in
W^{1,p}_0(\Omega)$, the element ${\tilde u} = F({\tilde v}) $ with  ${\tilde u}\in W^{1,p}_0(\Omega)$ and
\begin{equation}\label{Fpweaklin}
\int_\Omega( \mu_\epsilon(\psi_p^{-1}({\tilde v})) w +
(\psi_p^{-1})'({\tilde v})\Lambda 
\nabla {\tilde u} \cdot \nabla w )\dx
+ \epsilon \int_\Omega
|\nabla {\tilde u}|^{p-2} \nabla {\tilde u} \cdot \nabla w \dx 
= \int_\Omega w \d f, ~\text{for any}~w \in W^{1,p}_0(\Omega).
\end{equation}

We can then apply Lemma \ref{lem:minstcvxe}, letting $\sigma = \mu_\epsilon\circ\psi_p^{-1}$ and $\rho = (\psi_p^{-1})'$, which states that the mapping $F$ is well defined, continuous and compact.

\medskip

Let $t \in [0,1]$, and let ${\tilde u}\in W^{1,p}_0(\Omega)$ such that $t F({\tilde u}) = {\tilde u}$ (the existence of such $\tilde u$ is not yet proved). Let us prove that ${\tilde u}$ remains bounded. This is clear for $t=0$. Let us now assume that $t\in (0,1]$, and let $\tilde u$ satisfy $F({\tilde u}) = {\tilde u}/t$, which means that
\begin{multline*}
\int_\Omega ( t^{p-1} \mu_\epsilon(\psi_p^{-1}({\tilde u})) w+
t^{p-2} (\psi_p^{-1})'({\tilde u})\Lambda
\nabla {\tilde u} \cdot \nabla w )\dx
+ \epsilon \int_\Omega
|\nabla {\tilde u}|^{p-2} \nabla {\tilde u} \cdot \nabla w \dx \\
= t^{p-1} \int_\Omega w \d f, ~\text{for any}~w \in W^{1,p}_0(\Omega).
\end{multline*}
Letting $w={\tilde u}$,  we get, since $\mu_\epsilon(\psi_p^{-1}(\tilde u))\tilde u \ge 0$ and $(\psi_p^{-1})'({\tilde u})\Lambda
\nabla {\tilde u} \cdot \nabla \tilde u\ge 0 $, that
\[
  \epsilon\Vert \nabla {\tilde u}\Vert_p^{p-1} \le C_{\rm sob}^{(p,\infty)} \Vert f\Vert_{M(\Omega)},
\]
which shows that $u$ is bounded independently of $t$.

Hence the function $F$, which is continuous and compact from $W^{1,p}_0(\Omega)$ to $W^{1,p}_0(\Omega)$, is such that there exists $C$ such that, for any $t\in [0,1]$ and for any solution ${\tilde u}$ to $tF({\tilde u}) = {\tilde u}$, then $\Vert \nabla {\tilde u}\Vert_p \le C$; we can then apply Schaefer's fixed point theorem \cite{schaefer1955uber} (which is deduced from Leray-Schauder topological degree theory), which proves that there exists ${\tilde u}\in W^{1,p}_0(\Omega)$ such that $F({\tilde u}) = {\tilde u}$.
\end{proof}
\begin{remark}
 In the case where $\mu_\epsilon =0$, it is possible to get directly from \eqref{Fpweaklin} the existence of a fix-point, by applying Leray-Schauder fix-point theorem (in this case, the norm of $\tilde u$ is bounded independently of $\tilde v$).
\end{remark}

\begin{lemma}[A continuous compact operator]\label{lem:minstcvxe} 

Let $\sigma\in C(\mathbb{R})$ and $\rho\in C(\mathbb{R})$  be given, such that $\rho(s)\ge 0$ for all $s\in \mathbb{R}$. Then for all $\tilde v\in  W^{1,p}_0(\Omega)$, there exists one and only one  function $\tilde u$ such that 
\begin{multline}\label{eq:ministcvxe}
\tilde u \in W^{1,p}_0(\Omega)
\hbox{ and for any }w \in W^{1,p}_0(\Omega),\\
\int_\Omega \sigma(\tilde v) w \dx+\int_\Omega \rho(\tilde v) \Lambda \nabla \tilde u \cdot \nabla w \dx
+ \epsilon \int_\Omega |\nabla \tilde u|^{p-2} \nabla \tilde u \cdot \nabla w \dx 
= \int_\Omega w \d f.
\end{multline}
Moreover, denoting by $F$ the mapping $\tilde v\mapsto \tilde u$, then $F$ is continuous and compact.
\end{lemma}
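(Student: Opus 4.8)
The plan is to treat the genuinely nonlinear $p$-Laplace part by monotonicity and to absorb everything else into linear data. Since $\tilde v$ is fixed and $p>N$ gives $W^{1,p}_0(\Omega)\subset C(\overline\Omega)$, the function $\tilde v$ is bounded, so $\sigma(\tilde v)$ and $\rho(\tilde v)$ are fixed functions in $L^\infty(\Omega)$ with $0\le\rho(\tilde v)$. The term $\int_\Omega\sigma(\tilde v)w\dx$ is then a bounded linear form in $w$, which I merge with $\int_\Omega w\d f$ (itself in $(W^{1,p}_0(\Omega))'$) into a single functional $L\in(W^{1,p}_0(\Omega))'$. Existence and uniqueness of $\tilde u$ solving \eqref{eq:ministcvxe} I would obtain by minimising over $W^{1,p}_0(\Omega)$ the functional $\frac12\int_\Omega\rho(\tilde v)\Lambda\nabla\tilde u\cdot\nabla\tilde u\dx+\frac\epsilon p\int_\Omega|\nabla\tilde u|^p\dx-\langle L,\tilde u\rangle$: it is continuous, coercive (the $p$-Laplace term dominates a linear form), and strictly convex (strict convexity coming from $u\mapsto\|\nabla u\|_p^p$, the $\rho(\tilde v)\Lambda$ term being convex since $\rho\ge 0$ and $\Lambda$ is positive semidefinite). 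The direct method of the calculus of variations yields a minimiser, unique by strict convexity, and its Euler--Lagrange equation is exactly \eqref{eq:ministcvxe}.

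For the a priori bound, testing \eqref{eq:ministcvxe} with $w=\tilde u$, dropping the nonnegative term $\rho(\tilde v)\Lambda\nabla\tilde u\cdot\nabla\tilde u\ge 0$, and bounding $|\int_\Omega\sigma(\tilde v)\tilde u\dx|$ and $|\int_\Omega\tilde u\d f|$ by $C\|\nabla\tilde u\|_p$ through the embedding $W^{1,p}_0(\Omega)\subset C(\overline\Omega)$, yields $\epsilon\|\nabla\tilde u\|_p^{p-1}\le C$, with $C$ depending only on $\|\tilde v\|_\infty$ (through $\|\sigma(\tilde v)\|_\infty$) and $\|f\|_{M(\Omega)}$. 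Hence $F$ maps $\|\cdot\|_\infty$-bounded sets into $W^{1,p}_0(\Omega)$-bounded sets.

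Continuity and compactness both reduce to one passage to the limit: if $\tilde v_n\to\tilde v$ uniformly on $\overline\Omega$, then $\tilde u_n:=F(\tilde v_n)\to F(\tilde v)$ strongly in $W^{1,p}_0(\Omega)$. This uniform convergence holds when $\tilde v_n\to\tilde v$ in $W^{1,p}_0(\Omega)$, and also, by the compact embedding $W^{1,p}_0(\Omega)\hookrightarrow\hookrightarrow C(\overline\Omega)$ valid since $p>N$, along a subsequence of any bounded input sequence. Uniform convergence gives $\sigma(\tilde v_n)\to\sigma(\tilde v)$ and $\rho(\tilde v_n)\to\rho(\tilde v)$ uniformly, so the right-hand functionals $L_n$ converge to $L$ in $(W^{1,p}_0(\Omega))'$. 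The a priori bound lets me extract $\tilde u_n\rightharpoonup\tilde u^\star$ weakly in $W^{1,p}_0(\Omega)$ and $|\nabla\tilde u_n|^{p-2}\nabla\tilde u_n\rightharpoonup\chi$ weakly in $L^{p'}(\Omega)^N$; by Rellich $\tilde u_n\to\tilde u^\star$ uniformly. Passing to the limit in \eqref{eq:ministcvxe}, using strong-times-weak convergence in the linear flux term and uniform convergence in the zeroth-order term, shows that $\tilde u^\star$ solves the equation with flux $\rho(\tilde v)\Lambda\nabla\tilde u^\star+\epsilon\chi$.

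The crux, which I expect to be the main obstacle, is identifying $\chi=|\nabla\tilde u^\star|^{p-2}\nabla\tilde u^\star$ and upgrading to strong convergence. Testing with $w=\tilde u_n$, I would use $\langle L_n,\tilde u_n\rangle\to\langle L,\tilde u^\star\rangle$ together with the weak lower semicontinuity of the convex quadratic form $u\mapsto\int_\Omega\rho(\tilde v)\Lambda\nabla u\cdot\nabla u\dx$ (the coefficient swap $\rho(\tilde v_n)\to\rho(\tilde v)$ being harmless since $p>N\ge 2$ controls $\|\nabla\tilde u_n\|_2$) to obtain $\limsup_n\epsilon\int_\Omega|\nabla\tilde u_n|^{p-2}\nabla\tilde u_n\cdot\nabla\tilde u_n\dx\le\epsilon\int_\Omega\chi\cdot\nabla\tilde u^\star\dx$. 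Minty's monotonicity trick then yields $\int_\Omega(\chi-|\nabla w|^{p-2}\nabla w)\cdot(\nabla\tilde u^\star-\nabla w)\dx\ge 0$ for all $w$, whence $\chi=|\nabla\tilde u^\star|^{p-2}\nabla\tilde u^\star$; by the uniqueness already established, $\tilde u^\star=F(\tilde v)$, so the limit is independent of the extracted subsequence. The same $\limsup$ inequality gives $\limsup_n\int_\Omega|\nabla\tilde u_n|^{p-2}\nabla\tilde u_n\cdot(\nabla\tilde u_n-\nabla\tilde u^\star)\dx\le 0$, so the $(S_+)$ property of the $p$-Laplacian promotes weak to strong convergence of the gradients, giving $\tilde u_n\to F(\tilde v)$ in $W^{1,p}_0(\Omega)$. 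Continuity is the case of a convergent input, and compactness is the subsequence furnished by Rellich from a bounded input.
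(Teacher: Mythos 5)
Your proof is correct, and it diverges from the paper's in an instructive way. The existence step is essentially identical: the paper also minimises the functional $w\mapsto\int_\Omega \sigma(\tilde v)w\dx+\frac{\epsilon}{p}\|\nabla w\|_p^p+\frac12\int_\Omega\rho(\tilde v)\Lambda\nabla w\cdot\nabla w\dx-\int_\Omega w\d f$ by the direct method and derives \eqref{eq:ministcvxe} as its Euler--Lagrange equation. For uniqueness you invoke strict convexity of this functional (legitimate, since every solution of \eqref{eq:ministcvxe} is a critical point of a convex Gâteaux-differentiable functional and hence a global minimiser), whereas the paper subtracts two weak formulations and applies the algebraic inequality $|x-y|^p\le 2^{p-1}(|x|^{p-2}x-|y|^{p-2}y)\cdot(x-y)$; the payoff of the paper's choice is the quantitative comparison estimate \eqref{eq:fcpct}, valid for two \emph{different} inputs $\tilde v_1,\tilde v_2$. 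That estimate is then the whole of the paper's continuity/compactness argument: setting $\tilde v_1=\tilde v_n$, $\tilde v_2=\tilde v$ in \eqref{eq:fcpct} bounds $\epsilon 2^{1-p}\|\nabla(F(\tilde v)-F(\tilde v_n))\|_p^p$ by terms that vanish as soon as $\sigma(\tilde v_n)\to\sigma(\tilde v)$ and $\rho(\tilde v_n)\to\rho(\tilde v)$ in $L^\infty(\Omega)$, with no extraction of weak limits and no identification problem. Your route is the generic monotone-operator one --- a priori bound, weak limits, Minty's trick to identify $\chi=|\nabla\tilde u^\star|^{p-2}\nabla\tilde u^\star$, the $(S_+)$ property to upgrade to strong convergence, and uniqueness to dispose of the subsequence --- and all of its steps (in particular the coefficient swap in the weak lower semicontinuity of the quadratic form, and the convergence $\langle L_n,\tilde u_n\rangle\to\langle L,\tilde u^\star\rangle$ by strong-dual/weak pairing) are sound. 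The trade-off: your argument is longer and re-proves a limit identification that uniqueness plus \eqref{eq:fcpct} makes unnecessary, but it is more robust, needing only enough convergence of the coefficients to pass to the limit in the equation rather than a quantitative contraction-type inequality.
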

\begin{proof}
{\bf Step 1: existence of $\tilde u$ solution to \eqref{eq:ministcvxe}.}

\medskip

Let us define the function $ \mathcal{I}_{\tilde v} : W_0^{1,p}(\Omega) \to \R$ defined for any $w \in W^{1,p}_0(\Omega) \subset L^\infty(\Omega)\cap H^1_0(\Omega)$ by
$$
\mathcal{I}_{\tilde v}(w)
= \int_\Omega \sigma(\tilde v) w \dx + \frac{\epsilon}{p} \int_\Omega | \nabla w |^{p} \dx
+ \frac{1}{2}\int_\Omega \rho(\tilde v) \Lambda 
 \nabla w \cdot \nabla w \dx - \int_\Omega w \d f.
$$
We have, for any $\alpha>0$, that 
\begin{multline*}
 \int_\Omega w \d f - \int_\Omega \sigma(\tilde v) w \dx\le \Vert w\Vert_\infty(\Vert f\Vert_{M(\Omega)}+\Vert \sigma(\tilde v)\Vert_1)\le C_{\rm sob}^{(p,\infty)}\Vert \nabla w\Vert_p(\Vert f\Vert_{M(\Omega)}+\Vert \sigma(\tilde v)\Vert_1)
 \\ \le \alpha \frac {\Vert \nabla w\Vert_p^p}{p} + \frac {1}{\alpha} \frac {(\Vert f\Vert_{M(\Omega)}+\Vert \sigma(\tilde v)\Vert_1)^{p'}}{p'}.
\end{multline*}
Since $\rho(\tilde v)\ge 0$, choosing $\alpha=\frac {\epsilon} 2$ shows that there exists $c_2\ge 0$ such that
\[
 \forall w\in W^{1,p}_0(\Omega),\ \mathcal{I}_{\tilde v}(w)\ge \frac {\epsilon} 2\Vert \nabla w\Vert_p^p - c_2.
\]
 We then get that $\mathcal{I}_{\tilde v}(w)$ is bounded by below independently of $w$, and that $\mathcal{I}_{\tilde v}(w)\to +\infty$ if $\Vert \nabla w\Vert_p\to +\infty$. Therefore there exists a bounded minimizing sequence $(w_n)_{n\in\mathbb{N}}$. Hence there exists a subsequence, again denoted by $(w_n)_{n\in\mathbb{N}}$, which is weakly converging to some $\tilde u\in W_0^{1,p}(\Omega)$ (and therefore also weakly converging  in $H^1_0(\Omega)$). Using that the norm function is weakly lower semicontinuous and the positivity of $\int_\Omega \rho(\tilde v)  \Lambda 
 \nabla (w_n - \tilde u) \cdot \nabla (w_n - \tilde u) \dx$, we get that
\[
 \Vert \nabla \tilde u\Vert_p \le \liminf_{n\to +\infty}\Vert \nabla w_n\Vert_p \hbox{ and }\int_\Omega \rho(\tilde v)  \Lambda 
 \nabla \tilde u \cdot \nabla \tilde u \dx \le \liminf_{n\to +\infty} \int_\Omega \rho(\tilde v) \Lambda 
 \nabla w_n \cdot \nabla w_n \dx.
\]
This implies
\[
 \mathcal{I}_{\tilde v}(\tilde u)\le  \liminf_{n\to +\infty}\mathcal{I}_{\tilde v}(w_n),
\]
which proves that $\tilde u$ is a minimizer of $\mathcal{I}_{\tilde v}(w)$. Then, for any $w \in W^{1,p}_0(\Omega)$, the function  defined for all $t\in\mathbb{R}$ by $\mathcal{I}_{\tilde v}(\tilde u + tw)$, admits a minimum in $t=0$.

Computing the derivatives of the function $g(t) = |x+ty|^p$ for $x,y\in\mathbb{R}^N$ and using that $p>2$, we get that
\[
 \forall t\in [0,1],\ \forall x,y\in\mathbb{R}^N,\ |g''(t)|\le p(p-1) |y|^2(|x| + |y|)^{p-2}.
\]
This proves the right inequality in 
\[
\forall x,y\in\mathbb{R}^N,\ 0 \le  |x+y|^p - |x|^p - p |x|^{p-2} x\cdot y \le \frac {p(p-1)} 2 |y|^2(|x| + |y|)^{p-2},
\]
and therefore, in addition to $|x| + |y|\le 2 \max(|x|,|y|)$ that
\[
\forall t\in [-1,1],\ \forall x,y\in\mathbb{R}^N,\ 0 \le  |x+t y|^p - |x|^p - t p |x|^{p-2} x\cdot y \le t^2 p(p-1) 2^{p-3} \max(|x|^p,|y|^p).
\]
In addition to 
\[
 \forall t\in [-1,1],\ \forall x,y\in\mathbb{R}^N, \ \Lambda (x+ty)\cdot (x+ty) - \Lambda x \cdot x - 2 t \Lambda x \cdot y = t^2 \Lambda y \cdot y,
\]
we get that the expression defined for $t\in [-1,1]\setminus\{0\}$ by
\[
A(t) =  \frac {\mathcal{I}_{\tilde v}(\tilde u + tw) - \mathcal{I}_{\tilde v}(\tilde u )} t -\Big( \int_\Omega \sigma(\tilde v) w \dx+\int_\Omega \rho(\tilde v) \Lambda \nabla \tilde u \cdot \nabla w \dx
+ \epsilon \int_\Omega |\nabla \tilde u|^{p-2} \nabla\tilde  u \cdot \nabla w \dx 
- \int_\Omega w \d f\Big),
\]
satisfies
\[
 |A(t)| \le |t| \ \Big( \frac{\epsilon}{p} p(p-1) 2^{p-3} (\Vert \tilde u\Vert_p^p + \Vert w\Vert_p^p) + \overline{\lambda} \Vert\rho(\tilde v)\Vert_\infty \Vert w\Vert_2^2 \Big),
\]
and therefore $\lim_{t\to 0}A(t) = 0$.
Letting $t\to 0$ with $t>0$ and $t<0$ successively, observing that $\frac {\mathcal{I}_{\tilde v}(\tilde u + tw) - \mathcal{I}_{\tilde v}(\tilde u )}{t}$ has the sign of $t$ since $\mathcal{I}_{\tilde v}(\tilde u )$ minimizes $\mathcal{I}_{\tilde v}$, we obtain that
\[
 0 = \int_\Omega \sigma(\tilde v) w \dx+\int_\Omega \rho(\tilde v) \Lambda \nabla \tilde u \cdot \nabla w \dx
+ \epsilon \int_\Omega |\nabla \tilde u|^{p-2} \nabla\tilde  u \cdot \nabla w \dx 
- \int_\Omega w \d f.
\]
Therefore \eqref{eq:ministcvxe} holds for the minimizer $\tilde u$ of $\mathcal{I}_{v}$, which shows the existence of at least one solution to \eqref{eq:ministcvxe}.

\medskip

{\bf Step 2: uniqueness.}

\medskip

For ${\tilde v}_1,{\tilde v}_2\in  W^{1,p}_0(\Omega)$, let ${\tilde u}_1,{\tilde u}_2\in  W^{1,p}_0(\Omega)$ be respective solutions to \eqref{eq:ministcvxe}.
We get, for any  $w \in W^{1,p}_0(\Omega)$,
\begin{equation}\label{eq:tildeuun}
\int_\Omega \Lambda
\rho({\tilde v}_1)
 \nabla  {\tilde u}_1  \cdot \nabla w \dx
+ \epsilon \int_\Omega | \nabla  {\tilde u}_1 |^{p-2} \nabla  {\tilde u}_1  \cdot \nabla w \dx
= \int_\Omega w \d f -  \int_\Omega \sigma( {\tilde v}_1) w \dx,
\end{equation}

and 

\begin{multline*}
\int_\Omega \Lambda
\rho({\tilde v}_1)
 \nabla {\tilde u}_2  \cdot \nabla w \dx
+ \epsilon \int_\Omega | \nabla {\tilde u}_2 |^{p-2} \nabla {\tilde u}_2  \cdot \nabla w \dx
= \int_\Omega w \d f -  \int_\Omega \sigma({\tilde v}_2) w \dx
\\
+ \int_\Omega \Lambda (
\rho({\tilde v}_1)
-
\rho({\tilde v}_2)
) \nabla {\tilde u}_2  \cdot \nabla w \dx.
\end{multline*}
Letting $w = {\tilde u}_1 - {\tilde u}_2$ in the first one and $w = {\tilde u}_2 - {\tilde u}_1$ in the second one, adding both equations and using the inequality 
\cite[Lemma 2.40]{gdm}, which holds since $p\ge 2$,
\[
 \forall x,y\in\R^N,\,|x - y|^p \le 2^{p-1}(|x|^{p-2}x -|y|^{p-2}y)(x - y),
\]
we obtain

\begin{multline}\label{eq:fcpct}
\int_\Omega \Lambda
\rho({\tilde v}_1)
 \nabla ({\tilde u}_2- {\tilde u}_1)  \cdot \nabla ({\tilde u}_2- {\tilde u}_1) \dx
+ \epsilon 2^{1-p}\int_\Omega | \nabla ({\tilde u}_2- {\tilde u}_1) |^{p} \dx
\le   \int_\Omega (\sigma({\tilde v}_1) - \sigma({\tilde v}_2))({\tilde u}_2 - {\tilde u}_1 )\dx
\\
+ \int_\Omega \Lambda (
\rho({\tilde v}_1)
-
\rho({\tilde v}_2)
) \nabla  {\tilde u}_2  \cdot \nabla ({\tilde u}_2 - {\tilde u}_1) \dx.
\end{multline}
The above inequality shows that, for ${\tilde v}_1 = {\tilde v}_2$, then ${\tilde u}_1 = {\tilde u}_2$ (and therefore that \eqref{eq:tildeuun} characterises the minimizer of $\mathcal{I}_{{\tilde v}_1}$). Therefore the mapping $F~:~\tilde v\mapsto\tilde u$ unique solution of \eqref{eq:ministcvxe} is well defined.

\medskip

{\bf Step 3: continuity and compactness of $F$.}

\medskip

Letting $w={\tilde u}$ in \eqref{eq:ministcvxe}, we get that
\begin{equation}\label{eq:fcpctbound}
 \epsilon\Vert \nabla {\tilde u}\Vert_p^p \le C_{\rm sob}^{(p,\infty)} (\Vert f\Vert_{M(\Omega)}+\Vert \sigma({\tilde v})\Vert_1).
\end{equation}
We then obtain $\Vert \tilde u\Vert_2$ and $\Vert \nabla \tilde u\Vert_2$ are increasingly depending of
$ (\Vert f\Vert_{M(\Omega)}+\Vert \sigma({\tilde v})\Vert_1)$.

Let $({\tilde v}_n)_{n\in\mathbb{N}}$ be  a bounded sequence of $W^{1,p}_0(\Omega)$. We extract a subsequence, again denoted $({\tilde v}_n)_{n\in\mathbb{N}}$ such that  $\tilde v_n$ weakly converges to some ${\tilde v}\in W^{1,p}_0(\Omega)$ and strongly in $L^\infty(\Omega)$. We then have the convergence in $L^\infty(\Omega)$ of $\sigma(\tilde v_n)$ and  $\rho(\tilde v_n)$ respectively to $\sigma(\tilde v)$ and $\rho(\tilde v)$.

\medskip

Inequality \eqref{eq:fcpct} in which we let $\tilde v_1 = \tilde v_n$, $\tilde v_2 = \tilde v$, $\tilde u_1 = F( \tilde v_n)$, $\tilde u_2 = F(\tilde v)$, we get
\begin{multline*}
\epsilon 2^{1-p}\int_\Omega | \nabla (F({\tilde v})- F({\tilde v}_n)) |^{p} \dx
\le   \int_\Omega (\sigma({\tilde v}_n) - \sigma({\tilde v}))(F(\tilde v) - F( \tilde v_n) )\dx \\
+ \int_\Omega \Lambda (
\rho({\tilde v}_n)
-
\rho({\tilde v})
) \nabla  F(\tilde v) \cdot \nabla (F(\tilde v) - F( \tilde v_n)) \dx.
\end{multline*}
Notice that \eqref{eq:fcpctbound} implies that $F(\tilde v) - F( \tilde v_n)$ remains bounded in $L^1(\Omega)$ as well as $\nabla  F(\tilde v) \cdot \nabla (F(\tilde v) - F( \tilde v_n))$. Therefore, using the above convergences in $L^\infty(\Omega)$, we get that the right hand side of the above inequality tends to 0 and therefore $F({\tilde v}_n)$ tends to $F({\tilde v})$ in $W^{1,p}_0(\Omega)$. This shows that $F$ is compact and at the same time  that it is continuous.
\end{proof}

\subsection{Estimates on the solution of the regularised problem}\label{lin:sec:estimates}
We define the following function, which is an odd, strictly increasing diffeomorphism from $\mathbb{R}$ to $\mathbb{R}$:
\begin{equation} \label{lin:eq:deftildechi}
    \tilde\chi : 
		s \longmapsto  \int_0^s \sqrt{ \psi'(t)} \d\,t.
  \end{equation}

\begin{lemma}\label{lin:lem:estimep}

Let $u_\epsilon$ be
given such that \eqref{lin:pbuepsilonreg} holds.
Then there exists $\ctel{lin:cste:l2}$ only depending on $\underline{\lambda}$ and $\Vert f\Vert_{M(\Omega)}$ such that 
 \begin{equation}\label{eq:lin:majchitilde}
 \int_\Omega 
\psi'(u_\epsilon)
|\nabla u_\epsilon|^2 
\dx =  {\Vert\nabla \tilde\chi(u_\epsilon)\Vert_2}^2
\le \cter{lin:cste:l2}
 \end{equation}
and there holds 
\begin{equation}\label{eq:lin:weakbound}
\epsilon \int_\Omega (\psi'(u_\epsilon))^{p-1}
| \nabla u_\epsilon|^{p} \dx 
 \le \Vert f\Vert_{M(\Omega)}.
\end{equation}
\end{lemma}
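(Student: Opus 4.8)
The plan is to test the weak formulation \eqref{lin:pbuepsilonreg} with the admissible choice $w = \psi(u_\epsilon)$, where $\psi$ is the diffeomorphism \eqref{eq:def:psi }. First I would check admissibility: since $\psi$ is Lipschitz (because $\psi'\in(0,1]$) and $\psi(0)=0$, while $u_\epsilon\in W^{1,p}_0(\Omega)$, the composition $\psi(u_\epsilon)$ again lies in $W^{1,p}_0(\Omega)$, with $\nabla \psi(u_\epsilon) = \psi'(u_\epsilon)\nabla u_\epsilon$ by the chain rule for Sobolev functions. Moreover, since the range of $\psi$ is $(-1,1)$, one has $\Vert \psi(u_\epsilon)\Vert_\infty < 1$, which is exactly what will control the right-hand side.

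Next I would estimate the three terms produced on the left. The zeroth-order term $\int_\Omega \mu_\epsilon(u_\epsilon)\psi(u_\epsilon)\dx$ is nonnegative, because $\mu_\epsilon$ is nondecreasing with $\mu_\epsilon(0)=0$ and $\psi$ is odd and increasing, so $\mu_\epsilon(u_\epsilon)$ and $\psi(u_\epsilon)$ carry the same sign; this term can simply be discarded. The diffusion term becomes $\int_\Omega \psi'(u_\epsilon)\,\Lambda\nabla u_\epsilon\cdot\nabla u_\epsilon\dx$, which by the coercivity assumption \eqref{hyplambda} and $\psi'>0$ is bounded below by $\underline\lambda\int_\Omega \psi'(u_\epsilon)|\nabla u_\epsilon|^2\dx$. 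The $\epsilon$-term becomes exactly $\epsilon\int_\Omega (\psi'(u_\epsilon))^{p-1}|\nabla u_\epsilon|^p\dx$, upon using $|\nabla\psi(u_\epsilon)|^{p-2}=(\psi'(u_\epsilon))^{p-2}|\nabla u_\epsilon|^{p-2}$ together with $\nabla u_\epsilon\cdot\nabla\psi(u_\epsilon)=\psi'(u_\epsilon)|\nabla u_\epsilon|^2$; it is nonnegative.

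Combining these observations yields the single inequality
\[
\underline\lambda\int_\Omega \psi'(u_\epsilon)|\nabla u_\epsilon|^2\dx + \epsilon\int_\Omega (\psi'(u_\epsilon))^{p-1}|\nabla u_\epsilon|^p\dx \le \int_\Omega \psi(u_\epsilon)\d f \le \Vert f\Vert_{M(\Omega)}.
\]
Since both terms on the left are nonnegative, I would read off the two claimed bounds by discarding one term at a time: dropping the $\epsilon$-term gives \eqref{eq:lin:majchitilde} with $\cter{lin:cste:l2}=\Vert f\Vert_{M(\Omega)}/\underline\lambda$, while dropping the diffusion term gives \eqref{eq:lin:weakbound}. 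The equality in \eqref{eq:lin:majchitilde} is immediate from the definition \eqref{lin:eq:deftildechi}: since $\tilde\chi'=\sqrt{\psi'}$ we have $\nabla\tilde\chi(u_\epsilon)=\sqrt{\psi'(u_\epsilon)}\,\nabla u_\epsilon$, hence $|\nabla\tilde\chi(u_\epsilon)|^2=\psi'(u_\epsilon)|\nabla u_\epsilon|^2$.

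This argument is essentially mechanical; the only genuinely delicate point is the admissibility of the test function $\psi(u_\epsilon)$ together with the uniform bound $\Vert\psi(u_\epsilon)\Vert_\infty<1$ coming from the range of $\psi$. This is precisely the feature of the diffeomorphism \eqref{eq:def:psi } that makes the right-hand side $\int_\Omega \psi(u_\epsilon)\d f$ controllable by $\Vert f\Vert_{M(\Omega)}$ alone, uniformly in $\epsilon$, even though $f$ is only a measure rather than an $L^1$ or $H^{-1}$ datum.
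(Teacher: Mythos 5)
Your proposal is correct and follows exactly the paper's argument: test with $w=\psi(u_\epsilon)$, discard the nonnegative term $\int_\Omega\mu_\epsilon(u_\epsilon)\psi(u_\epsilon)\dx$, use coercivity of $\Lambda$ and the identity $|\nabla\psi(u_\epsilon)|^{p-2}\nabla u_\epsilon\cdot\nabla\psi(u_\epsilon)=(\psi'(u_\epsilon))^{p-1}|\nabla u_\epsilon|^p$, and bound the right-hand side by $\Vert f\Vert_{M(\Omega)}$ via $\Vert\psi(u_\epsilon)\Vert_\infty\le 1$. The only difference is that you spell out the admissibility of the test function and the sign argument in more detail than the paper does, which is harmless.
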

\begin{proof}
Taking $w = \psi (u_\epsilon) $ in ($\ref{lin:pbuepsilonreg}$), we obtain
\[
\int_\Omega (\mu_\epsilon(u_\epsilon)\psi (u_\epsilon) + \Lambda \nabla u_\epsilon \cdot \nabla \psi (u_\epsilon)) \dx 
+ \epsilon \int_\Omega (\psi'(u_\epsilon))^{p-2}
| \nabla u_\epsilon|^{p-2}
\nabla u_\epsilon \cdot \nabla \psi (u_\epsilon) \dx 
= \int_\Omega \psi (u_\epsilon) \d f,
\]
which provides, since $\mu_\epsilon(s)\psi (s)\ge 0$,
$$
\int_\Omega \psi'(u_\epsilon)\Lambda \nabla u_\epsilon \cdot \nabla u_\epsilon \dx
+ \epsilon \int_\Omega (\psi'(u_\epsilon))^{p-1}
| \nabla u_\epsilon|^{p}
\dx 
\le \int_\Omega \psi (u_\epsilon) \d f.
$$
Using the fact that $ \| \psi (u_\epsilon)  \|_{\infty} \le 1$ we then obtain \eqref{eq:lin:majchitilde} and \eqref{eq:lin:weakbound}.
\end{proof}

\begin{lemma}\label{lin:lem:estlrep}
Under the assumptions of Lemma \ref{lin:lem:estimep}, for any $ 1 < q < \frac{N}{N-1}$, there exists $\ctel{lin:cste:lr}$ only depending on $q$, $N$, $\underline{\lambda}$ and $\Vert f\Vert_{M(\Omega)}$ such that
 \begin{equation}\label{lin:eq:estimmupsi}
 \Vert\nabla u_\epsilon\Vert_q \dx
\le \cter{lin:cste:lr},
\end{equation}
and, letting ${\hat q} = q/(2-q)$ (then ${\hat q}\in (1,+\infty)$ if $N=2$ and  ${\hat q}\in (1, N/(N-2))$ if $N\ge 3$),
 \begin{equation}\label{lin:eq:estimpsi}
 \Vert u_\epsilon\Vert_{{\hat q}} \le \cter{lin:cste:lr} \hbox{ and }\Vert 1/\psi' ( u_\epsilon)\Vert_{{\hat q}} \le  \cter{lin:cste:lr}.
\end{equation}
\end{lemma}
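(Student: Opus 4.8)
The plan is to derive the $L^q$ bound on $\nabla u_\epsilon$ from the already-established weighted estimate \eqref{eq:lin:majchitilde}, namely $\int_\Omega \psi'(u_\epsilon)|\nabla u_\epsilon|^2\dx\le\cter{lin:cste:l2}$, by splitting the factor $|\nabla u_\epsilon|^q$ into a part weighted by $\sqrt{\psi'(u_\epsilon)}$ and a compensating negative power of $\psi'(u_\epsilon)$. Concretely, for $1<q<\frac N{N-1}$ I would write
\[
|\nabla u_\epsilon|^q = \Big(\psi'(u_\epsilon)|\nabla u_\epsilon|^2\Big)^{q/2}\,\big(\psi'(u_\epsilon)\big)^{-q/2},
\]
and apply H\"older's inequality with exponents $2/q$ and $2/(2-q)$ (both $>1$ since $q<2$). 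This yields
\[
\Vert\nabla u_\epsilon\Vert_q^q\le\Big(\int_\Omega\psi'(u_\epsilon)|\nabla u_\epsilon|^2\dx\Big)^{q/2}\Big(\int_\Omega\big(\psi'(u_\epsilon)\big)^{-q/(2-q)}\dx\Big)^{(2-q)/2}.
\]
The first factor is controlled by $\cter{lin:cste:l2}^{q/2}$; the exponent appearing in the second factor is exactly $q/(2-q)=\hat q$, which is the motivation for the definition of $\hat q$ in the statement.

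The second step is therefore to bound $\int_\Omega(\psi'(u_\epsilon))^{-\hat q}\dx=\Vert 1/\psi'(u_\epsilon)\Vert_{\hat q}^{\hat q}$. This is where the structure of $\psi$ must be used. From \eqref{eq:boundpsiprime} one has $1/\psi'(s)=(1+\ln(1+|s|))^2(1+|s|)$, so for any $\tau\in(0,2)$ the lower bound $\psi'(s)\ge\frac{\tau^2}{4(1+|s|)^{1+\tau}}$ gives $1/\psi'(s)\le\frac{4}{\tau^2}(1+|s|)^{1+\tau}$. Hence, up to a constant, $(\psi'(u_\epsilon))^{-\hat q}$ is dominated by $(1+|u_\epsilon|)^{(1+\tau)\hat q}$, and integrating reduces the estimate on $1/\psi'(u_\epsilon)$ to an $L^{(1+\tau)\hat q}$ bound on $u_\epsilon$ itself. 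Thus the two assertions of \eqref{lin:eq:estimpsi} are linked, and it suffices to establish the $L^{\hat q}$-type bound on $u_\epsilon$ together with a slightly higher integrability to absorb the factor $(1+\tau)$.

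The $L^{\hat q}$ bound on $u_\epsilon$ then follows from the gradient bound \eqref{lin:eq:estimmupsi} through the Sobolev inequality \eqref{eq:sobolev}: since $q<\frac N{N-1}<N$, the Sobolev embedding gives $\Vert u_\epsilon\Vert_{q^*}\le C_{\rm sob}^{(q,q^*)}\Vert\nabla u_\epsilon\Vert_q$ with $q^*=\frac{qN}{N-q}$, and a direct computation shows $q^*=\hat q$ precisely when $\hat q=q/(2-q)$, so the ranges match (for $N=2$ one uses that any finite exponent is admissible, and for $N\ge3$ that $\hat q<N/(N-2)$ corresponds to $q<N/(N-1)$). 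The remaining care is to choose $q$ slightly below $\frac N{N-1}$ and $\tau$ small enough that $(1+\tau)\hat q$ stays below the target Sobolev exponent associated with $q$, which is possible because the inequalities defining the admissible ranges are strict; this accounts for why all constants are allowed to depend on $q$ and $N$.

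The main obstacle I anticipate is the interlocking of exponents: one must verify that the single choice of $q$ simultaneously (i) lets H\"older produce exponent $\hat q$ on the $1/\psi'$ factor, (ii) makes the Sobolev conjugate $q^*$ equal to that same $\hat q$, and (iii) leaves enough room so that the auxiliary power $(1+\tau)\hat q$ needed to control $1/\psi'$ is still reached by the Sobolev bound after shrinking $q$ or $\tau$. Each of these is elementary arithmetic, but the estimate is genuinely circular-looking — the gradient bound feeds the $L^{\hat q}$ bound, which feeds the $1/\psi'$ bound, which in turn was used inside the H\"older step for the gradient bound — so the order must be: first gradient via \eqref{eq:lin:majchitilde} and a preliminary crude control of $1/\psi'$, then Sobolev, then the refined $1/\psi'$ estimate, being careful that the constant $\cter{lin:cste:l2}$ and $\Vert f\Vert_{M(\Omega)}$ alone propagate through.
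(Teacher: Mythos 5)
Your opening step coincides with the paper's: the same splitting $|\nabla u_\epsilon|^q=(\psi'(u_\epsilon)|\nabla u_\epsilon|^2)^{q/2}(\psi'(u_\epsilon))^{-q/2}$ and H\"older with exponents $2/q$ and $2/(2-q)$, reducing everything to a bound on $\int_\Omega(\psi'(u_\epsilon))^{-\hat q}\dx$. But at that point your argument has a genuine gap, which you flag yourself without closing: you propose to dominate $(\psi'(u_\epsilon))^{-\hat q}$ by $(1+|u_\epsilon|)^{(1+\tau)\hat q}$ and then invoke an $L^{(1+\tau)\hat q}$ bound on $u_\epsilon$ --- but in your scheme that bound is only available through the Sobolev embedding of the very $W^{1,q}_0$ estimate you are in the middle of proving. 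Your proposed escape, ``first gradient via \eqref{eq:lin:majchitilde} and a preliminary crude control of $1/\psi'$'', is never instantiated: no such crude control is exhibited, and none is obvious, since the only $\epsilon$-uniform information available at this stage is \eqref{eq:lin:majchitilde} and \eqref{eq:lin:weakbound}, neither of which gives any integrability of $u_\epsilon$ itself. The circle is therefore not broken.

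The paper's resolution --- and the reason the auxiliary function $\tilde\chi(s)=\int_0^s\sqrt{\psi'(t)}\,{\rm d}t$ of \eqref{lin:eq:deftildechi} is introduced --- is to route the needed integrability through $\tilde\chi(u_\epsilon)$ rather than through $u_\epsilon$. Indeed \eqref{eq:lin:majchitilde} says precisely that $\Vert\nabla\tilde\chi(u_\epsilon)\Vert_2\le(\cter{lin:cste:l2})^{1/2}$, so the standard $H^1_0$ Sobolev embedding bounds $\Vert\tilde\chi(u_\epsilon)\Vert_{2N/(N-2)}$ with no reference to $\nabla u_\epsilon$ in any $L^q$. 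The two growth comparisons \eqref{eq:bounduspsiprime} and \eqref{eq:boundchitilde} are then eliminated against each other to give \eqref{lin:eq:psiprimechi}, i.e. $1/\psi'(s)\lesssim(1+|\tilde\chi(s)|)^{2(1+\tau)/(1-\tau)}$, and $\tau$ is chosen via \eqref{lin:eq:defeps} so that $\hat q\cdot 2(1+\tau)/(1-\tau)=2N/(N-2)$. This bounds $\int_\Omega(\psi'(u_\epsilon))^{-\hat q}\dx$ first, hence gives \eqref{lin:eq:estimmupsi}, and only then the $L^{\hat q}$ bound on $u_\epsilon$ by Sobolev. (A minor further slip in your write-up: $q^*=qN/(N-q)$ equals $\hat q=q/(2-q)$ only at the excluded endpoint $q=N/(N-1)$; for $q<N/(N-1)$ one has $q^*>\hat q$, which still suffices on the bounded domain $\Omega$.) To repair your proof you must supply this missing device or an equivalent one, such as the level-set iteration of Boccardo--Gallou\"et; as written, the key integral is never actually bounded.
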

\begin{remark}
It suffices to apply \cite[Lemma 2.2]{BGV} for getting the proof of \eqref{lin:eq:estimmupsi} and the left part of \eqref{lin:eq:estimpsi}, remarking that \eqref{eq:boundpsiprime} provides  \cite[(2.16)]{BGV} for any $m\in(0,1)$.
In the next proof, we are essentially using the ideas issued from  \cite{BG}-\cite{BGV} for proving \eqref{lin:eq:estimmupsi} and the left part of \eqref{lin:eq:estimpsi}, with a slightly different way for applying the Sobolev inequalities. Another small difference is the use of the function $\psi$ instead of the function $s\mapsto (1-(1+|s|)^{-m}){\rm sign}(s)$.
\end{remark}

\begin{proof}
Using Hölder's inequality with conjugate exponents $ \frac{2}{q}>1 $ and $\frac{2}{2-q} $ and owing to \eqref{eq:lin:majchitilde} in Lemma \ref{lin:lem:estimep}, we obtain
\begin{multline*}
\int_\Omega |\nabla u_\epsilon|^q \dx
=
\int_\Omega |\nabla u_\epsilon|^q 
\left(
\frac
{\psi'(u_\epsilon)}
{\psi'(u_\epsilon)}
\right)^{q/2} \dx
\\
\leq
\left(\int_\Omega\psi'(u_\epsilon)|\nabla u_\epsilon|^2 \dx\right)^{q/2}
\left(\int_\Omega\frac 1 {(\psi'(u_\epsilon))^{q/(2-q)}}\dx\right)^{(2-q)/2} \\
\le \left(\cter{lin:cste:l2}\right)^{q/2}
\left(\int_\Omega \frac 1 {(\psi'(u_\epsilon))^{q/(2-q)}}\dx\right)^{(2-q)/2}.
\end{multline*}
Our aim is now to bound the $L^{q/(2-q)}$ norm of  $1/\psi' ( u_\epsilon)$, using Sobolev inequalities and the $L^2$ bound \eqref{eq:lin:majchitilde} on  $\nabla \tilde\chi(u_\epsilon) $. For this purpose, we compare, for any $s\in \mathbb{R}$, the expression  $1/\psi' ( s)$ with powers of  $\tilde\chi(s) $.
Let us recall that \eqref{eq:boundpsiprime} states that the main part of $1/\psi' (s)$ is $ 1+|s|$, up to an arbitrary small exponent $\tau$. In particular, the left inequality of \eqref{eq:boundpsiprime} provides, for any $\tau\in (0,2)$ and $s\ge 0$,
\begin{equation}\label{eq:bounduspsiprime}
\frac 1 {\psi'(s)} \le \frac{4(1+s)^{1+\tau}} {\tau^2}.
\end{equation}
Recall that $\tilde\chi(s)$ is a primitive of $\sqrt{\psi'(s)}$, and is therefore expected to behave, up to an arbitrary small exponent, as $\sqrt{1+|s|}$. Indeed, for any $\tau\in (0,2)$ and $s>0$, taking the square root of  the left part of \eqref{eq:boundpsiprime} gives
\[
 \frac {\tau} {2(1+s)^{(1+\tau)/2}}\le \sqrt{\psi' (s)}.
\]
Considering only $\tau\in (0,1)$ and integrating the preceding relation between $0$ and $s\ge 0$  provides
\begin{equation}\label{eq:boundchitilde}
\tau ((1+s)^{\frac {1 - \tau} 2}-1)\le  \tilde\chi(s).
\end{equation}
Eliminating $s$ between \eqref{eq:bounduspsiprime} and \eqref{eq:boundchitilde} yields, for any $\tau\in (0,1)$,
\begin{equation}\label{lin:eq:psiprimechi}
\forall s\in\mathbb{R},\ \frac 1 {\psi' (s)}\le \frac {4} {\tau^2} \big( \frac 1 \tau |\tilde\chi(s)|+1\big)^{2\frac{1+\tau}{1-\tau}}.
\end{equation}
We thus obtain that, defining $\rho(\tau) = 2 \frac {(1+\tau)q} {(1-\tau)(2-q)}$, there exist $\ctel{lin:cte:alphataur}^{(q,\tau)}$ and $\ctel{lin:cte:betataur}^{(q,\tau)}$ such that
$$
\int_\Omega
\frac 1 {(\psi'(u_\epsilon))^{q/(2-q)}}\dx
\leq
\cter{lin:cte:alphataur}^{(q,\tau)}
\int_\Omega
|\tilde\chi(u_\epsilon)|^{\rho(\tau)}
\dx
+
\cter{lin:cte:betataur}^{(q,\tau)}
|\Omega|
$$
\begin{itemize}
\item In the case $N=2$, let us define $\tau = \frac 1 2$. Then the Sobolev inequality \eqref{eq:sobolev} provides
\[
 \Vert \tilde\chi(u_\epsilon)\Vert_{\rho(\tau)} \le C_{\rm sob}^{(2,\rho(\tau))} \| \nabla \tilde\chi(u_\epsilon)\|_{2}.
\]

\item In the case $N>2$, let us select $\tau\in (0,1)$ such that
\begin{equation}
\frac {1+\tau} {1-\tau}  = \frac {N} {N-2} \frac {2-q} q.\label{lin:eq:defeps}
\end{equation}
Indeed, since $q\in  [1,N/(N-1))$ implies $(2-q)/q \in  ((N-2)/N,1]$,
the quantity $a_q$ such that $a_q =\frac {N} {N-2}
\frac {2-q} q$ is such that $1<a_q$. Since \eqref{lin:eq:defeps} leads to $\tau = \frac {a_q - 1}{a_q + 1}$, we get that $\tau\in (0, 1)$ and that $\rho(\tau) = \frac{2N}{N-2} $, and
\eqref{eq:sobolev} holds for any such $\rho(\tau)$.
It leads to 
\[
 \Vert \tilde\chi(u_\epsilon)\Vert_{\rho(\tau)} \le C_{\rm sob}^{(2,\rho(\tau))} \| \nabla \tilde\chi(u_\epsilon)\|_{2}.
\]
\end{itemize}
Gathering the preceding inequalities and applying again \eqref{eq:lin:majchitilde} in Lemma \ref{lin:lem:estimep} provide
\[
\int_\Omega |\nabla u_\epsilon|^q \dx
\leq
\left(\cter{lin:cste:l2}\right)^{q/2}
\left(\cter{lin:cte:alphataur}^{(q,\tau)}
\big( C_{\rm sob}^{(2,\rho(\tau))} (\cter{lin:cste:l2})^{\frac 1 2} \big)^{ \rho(\tau)}
+
\cter{lin:cte:betataur}^{(q,\tau)}
|\Omega|
\right)^{(2-q)/2},
\]
which provides \eqref{lin:eq:estimmupsi}.
A Sobolev inequality then yields the left inequality of \eqref{lin:eq:estimpsi}. The right one is then a consequence of \eqref{eq:bounduspsiprime}
and of the choice of $\tau$ such that ${\hat q}(1+\tau) < N/(N-2)$ if $N>2$.
\end{proof}
\begin{lemma}
Under the assumptions of Lemma \ref{lin:lem:estimep}, there exists $\ctel{lin:cste:tk}$ only depending on $\underline{\lambda}$, $k$ and $ \| f\|_{M(\Omega)}$ such that
 \[
 \| \nabla T_k( u_\epsilon)\|_{2} \le \cter{lin:cste:tk},
\]
where  $T_k$ is the truncation function defined by $T_k(s) = \min(|s|,k){\rm sign}(s)$ for all $s\in\R$ (where ${\rm sign}(s)=1$ if $s\ge 0$ and $-1$ if $s < 0$).
\end{lemma}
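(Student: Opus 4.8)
The plan is to test the weak formulation \eqref{lin:pbuepsilonreg} with $w = T_k(u_\epsilon)$. Since $T_k$ is Lipschitz continuous and $u_\epsilon \in W^{1,p}_0(\Omega)$, the composition $T_k(u_\epsilon)$ also belongs to $W^{1,p}_0(\Omega)$, so it is an admissible test function. The key pointwise identity is $\nabla T_k(u_\epsilon) = \mathbf{1}_{\{|u_\epsilon|<k\}}\,\nabla u_\epsilon$, which I will use repeatedly to rewrite each term.

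With this choice of $w$, I would examine the three terms on the left-hand side separately. The zeroth-order term $\int_\Omega \mu_\epsilon(u_\epsilon)\,T_k(u_\epsilon)\dx$ is nonnegative, because $\mu_\epsilon$ and $T_k$ are both nondecreasing and vanish at $0$, so $\mu_\epsilon(u_\epsilon)$ and $T_k(u_\epsilon)$ share the same sign. For the regularising term, the identity above gives $\nabla u_\epsilon\cdot\nabla T_k(u_\epsilon) = \mathbf{1}_{\{|u_\epsilon|<k\}}|\nabla u_\epsilon|^2 \ge 0$, and since $|\nabla\psi(u_\epsilon)|^{p-2}\ge 0$, the whole term $\epsilon\int_\Omega |\nabla\psi(u_\epsilon)|^{p-2}\nabla u_\epsilon\cdot\nabla T_k(u_\epsilon)\dx$ is nonnegative as well. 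Both of these terms can therefore be discarded. Finally, the diffusion term satisfies $\Lambda\nabla u_\epsilon\cdot\nabla T_k(u_\epsilon) = \Lambda\nabla T_k(u_\epsilon)\cdot\nabla T_k(u_\epsilon)$, again by the same identity, so by the coercivity assumption \eqref{hyplambda} it is bounded below by $\underline{\lambda}\,|\nabla T_k(u_\epsilon)|^2$.

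It remains to bound the right-hand side: since $\Vert T_k(u_\epsilon)\Vert_\infty \le k$, one has $\left|\int_\Omega T_k(u_\epsilon)\d f\right| \le k\,\Vert f\Vert_{M(\Omega)}$. Combining these estimates yields
\[
\underline{\lambda}\,\Vert \nabla T_k(u_\epsilon)\Vert_2^2 \le \int_\Omega \Lambda\nabla T_k(u_\epsilon)\cdot\nabla T_k(u_\epsilon)\dx \le k\,\Vert f\Vert_{M(\Omega)},
\]
from which the claim follows with $\cter{lin:cste:tk} = \sqrt{k\,\Vert f\Vert_{M(\Omega)}/\underline{\lambda}}$, a constant depending only on $\underline{\lambda}$, $k$ and $\Vert f\Vert_{M(\Omega)}$ as required. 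There is no genuine obstacle here: this is the standard truncation estimate, and the only point requiring care is verifying that the regularising $p$-Laplace term and the $\mu_\epsilon$ term have a favourable sign against the test function $T_k(u_\epsilon)$, so that both may simply be dropped rather than estimated.
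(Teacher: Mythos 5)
Your proof is correct, but it follows a genuinely different route from the paper's. The paper never returns to the weak formulation \eqref{lin:pbuepsilonreg} at this point: it deduces the truncation bound directly from the weighted energy estimate \eqref{eq:lin:majchitilde} already established in Lemma \ref{lin:lem:estimep}, by writing $|\nabla u_\epsilon|^2=\psi'(u_\epsilon)^{-1}|\nabla\tilde\chi(u_\epsilon)|^2$ and using that $\psi'$ is decreasing in $|s|$, so that on $\{|u_\epsilon|\le k\}$ one has $\psi'(u_\epsilon)\ge\psi'(k)$ and hence $\int_{|u_\epsilon|\le k}|\nabla u_\epsilon|^2\dx\le\psi'(k)^{-1}\int_\Omega\psi'(u_\epsilon)|\nabla u_\epsilon|^2\dx\le \cter{lin:cste:l2}/\psi'(k)$. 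Your argument is instead the classical truncation estimate: test with $T_k(u_\epsilon)$, discard the monotone zeroth-order term and the positive $p$-Laplace term, and use coercivity \eqref{hyplambda} together with the duality bound $|\int_\Omega T_k(u_\epsilon)\d f|\le k\Vert f\Vert_{M(\Omega)}$; all the points needing care (admissibility of $T_k(u_\epsilon)\in W^{1,p}_0(\Omega)$ as a test function, the sign of $\mu_\epsilon(u_\epsilon)T_k(u_\epsilon)$, the identity $\nabla T_k(u_\epsilon)=\mathbf{1}_{\{|u_\epsilon|<k\}}\nabla u_\epsilon$) are correctly handled. Each approach has its merits: the paper's is a two-line corollary of an estimate already in hand and needs no further use of the equation, while yours yields the sharper and more explicit constant $\sqrt{k\Vert f\Vert_{M(\Omega)}/\underline{\lambda}}$, linear in $k$, whereas $\cter{lin:cste:l2}/\psi'(k)$ grows like $k(\ln k)^2$.
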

\begin{proof}
Using that $T_k'(s)=1$ for $|s|\le k$ and  $T_k'(s)=0$ for $|s|>k$, as well as $|\nabla u_{\epsilon} |^2 =\frac {1} {\psi' (u_{\epsilon})} |\nabla\tilde\chi( u_{\epsilon}) |^2 $,
we have that
\[
 \int_\Omega |\nabla T_k  u_{\epsilon} |^2 \dx =  \int_{|u_\epsilon|\le k}   |\nabla u_{\epsilon} |^2 \dx =  \int_{|u_\epsilon|\le k}  \frac {1} {\psi' (u_{\epsilon})} |\tilde\chi( u_{\epsilon}) |^2 \le \frac {1} {\psi' (k)}  \int_\Omega |\nabla\tilde\chi( u_{\epsilon}) |^2 \dx.
\]
We conclude, using  \eqref{eq:lin:majchitilde} in Lemma \ref{lin:lem:estimep}.
\end{proof}

\section{Convergence of the regularized problem to the linear problem}\label{sec:linear}

In this section, we consider the case $\mu_\epsilon\equiv 0$ in Section \ref{sec:regpb}, and we study how the resulting Problem \eqref{lin:probconteps}-\eqref{lin:probboundeps} is an approximation of Problem \eqref{eq:lin:probcont}-\eqref{eq:probbound}.

\subsection{Convergence to a weak solution}\label{lin:sec:convergenceweak}

Let us provide a weak sense for a solution of Problem \eqref{eq:lin:probcont}-\eqref{eq:probbound}.
\begin{definition}\label{def:weaksol}
We define the space $\mathcal{S}_N(\Omega)$ containing any solution and the space $\mathcal{T}_N(\Omega)$ containing the test functions by
\begin{equation}
\mathcal{S}_N(\Omega) = \bigcap_{q\in(1,\frac N {N-1})} W^{1,q}_0(\Omega)\hbox{ and }\mathcal{T}_N(\Omega)  = \bigcup_{r\in(N,+\infty)} W^{1,r}_0(\Omega) \subset C(\overline{\Omega}),\label{u:eq:defS7}
\end{equation}
We say that  a measurable function $u $ is a weak solution to Problem \eqref{eq:lin:probcont}-\eqref{eq:probbound}  if
\begin{equation}
u  \in S_N(\Omega)\hbox{ and }\int_\Omega \Lambda\nabla u\cdot\nabla w \dx = \int_\Omega w \d f,~\text{for any}~w \in \mathcal{T}_N(\Omega).
\label{eq:lin:pbcons}\end{equation}
\end{definition}
Let us observe that $u \in S_N(\Omega)$ implies that $u \in L^{\hat q}(\Omega)$ for any $\hat q\in(1,\frac N {N-2})$ if $N>2$ and for  any $\hat q\in(1,+\infty)$ if $N=2$.  Note that all $w\in \mathcal{T}_N(\Omega)$ is an element of $C(\overline{\Omega})$.

We can now state a result of existence, obtained by convergence of a solution to the regularised problem to a solution of the continuous problem, which holds owing to the choice done in Section \ref{sec:motiv} for the weight in the vanishing $p-$Laplace term.
\begin{lemma} \label{lin:u:lem:weakcv} Let  $(\epsilon_n)_{n \in \mathbb{N}} $ be a sequence of  positive numbers which converges to zero, and let  $u_{n}\in W^{1,p}_0(\Omega)$ be such that \eqref{lin:pbuepsilonreg} holds with $\epsilon = \epsilon_n$.

Then there exist a subsequence of  $(\epsilon_n,u_n)_{n\in \mathbb{N}} $, again denoted $(\epsilon_n,u_n)_{n\in \mathbb{N}} $, and $u\in \mathcal{S}_N(\Omega)$, such that the sequence $(u_n)_{ n \in \mathbb{N}}$ converges to $u \in S_N(\Omega)$ weakly in $W^{1,q}_0(\Omega)$ for any $q\in (1, N/(N-1))$, strongly  in $L^{\hat q}(\Omega)$  for all $\hat q\in [1,+\infty)$ if $N=2$ and for all $\hat q\in [1, N/(N-2))$ if $N\ge 3$ and almost everywhere in $\Omega$.

Moreover, $u$ is a weak solution in the sense of Definition \ref{def:weaksol}, and, for any $ 1 < q < \frac{N}{N-1}$, there exists $\ctel{lin:cste:lru}$ only depending on $q$, $N$, $\underline{\lambda}$ and $\Vert f\Vert_{M(\Omega)}$ such that
 \[
 \| \nabla u\|_{q} \le \cter{lin:cste:lru} .
\]
\end{lemma}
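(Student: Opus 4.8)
The plan is to combine the uniform estimates already established in Lemmas \ref{lin:lem:estimep} and \ref{lin:lem:estlrep} (applied with $\mu_\epsilon \equiv 0$) with a standard compactness-and-passage-to-the-limit argument. First I would extract the convergent subsequence. Estimate \eqref{lin:eq:estimmupsi} gives a uniform bound $\Vert \nabla u_n\Vert_q \le \cter{lin:cste:lr}$ for each fixed $q \in (1, N/(N-1))$, so $(u_n)$ is bounded in $W^{1,q}_0(\Omega)$ for every such $q$. Since $W^{1,q}_0(\Omega)$ is reflexive for $q>1$, after extracting a subsequence I obtain weak convergence $u_n \rightharpoonup u$ in $W^{1,q}_0(\Omega)$; by a diagonal argument over a countable increasing sequence $q_k \uparrow N/(N-1)$ the same limit $u$ works for all $q$, so $u \in \mathcal{S}_N(\Omega)$. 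Rellich--Kondrachov then upgrades this to strong convergence in $L^{\hat q}(\Omega)$ for $\hat q$ in the stated range (each such $\hat q$ is below the Sobolev critical exponent $qN/(N-q)$ for $q$ chosen close enough to $N/(N-1)$), and, after a further extraction, to a.e.\ convergence. The quantitative bound $\Vert \nabla u\Vert_q \le \cter{lin:cste:lru}$ on the limit follows from weak lower semicontinuity of the norm applied to \eqref{lin:eq:estimmupsi}.

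It then remains to pass to the limit in the weak formulation \eqref{lin:pbuepsilonreg}. Fix a test function $w \in \mathcal{T}_N(\Omega)$, so $w \in W^{1,r}_0(\Omega)$ for some $r > N$; I may also take $w \in C^\infty_c(\Omega)$ first and use a density argument, since $\Vert \nabla u_n\Vert_q$ is bounded and $q' > N$. In \eqref{lin:pbuepsilonreg} the zeroth-order term vanishes because $\mu_\epsilon \equiv 0$. For the diffusion term $\int_\Omega \Lambda \nabla u_n \cdot \nabla w \dx$, the weak convergence $\nabla u_n \rightharpoonup \nabla u$ in $L^q(\Omega)^N$ pairs against $\Lambda^{\mathsf{T}} \nabla w \in L^{q'}(\Omega)^N$ (valid since $\Lambda \in L^\infty$ and $w \in W^{1,r}_0 \subset W^{1,q'}_0$), giving convergence to $\int_\Omega \Lambda \nabla u \cdot \nabla w \dx$. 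The right-hand side $\int_\Omega w \d f$ is independent of $n$.

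The main obstacle is showing the vanishing of the regularising term
\[
\epsilon_n \int_\Omega |\nabla \psi(u_n)|^{p-2} \nabla u_n \cdot \nabla w \dx.
\]
This is precisely the computation sketched in Section \ref{sec:motiv}, now to be made rigorous. Writing $|\nabla\psi(u_n)|^{p-2}\nabla u_n = (\psi'(u_n))^{p-2}|\nabla u_n|^{p-2}\nabla u_n$ and inserting a factor $\psi'(u_n)^{(p-1)/p}\cdot\psi'(u_n)^{-(p-1)/p}$, a H\"older inequality with exponents $p/(p-1)$ and $p$ bounds the integral by
\[
\Vert \nabla w\Vert_\infty \, \Big(\epsilon_n \int_\Omega (\psi'(u_n))^{p-1}|\nabla u_n|^p \dx\Big)^{\frac{p-1}{p}} \, \epsilon_n^{1/p} \Big(\int_\Omega \frac{(\psi'(u_n))^{p-2}}{(\psi'(u_n))^{p-1}} \dx\Big)^{1/p}.
\]
The first parenthesis is bounded by $\Vert f\Vert_{M(\Omega)}$ via \eqref{eq:lin:weakbound}, and the last integral is $\int_\Omega 1/\psi'(u_n)\dx \le \Vert 1/\psi'(u_n)\Vert_{\hat q} |\Omega|^{1-1/\hat q}$, uniformly bounded by the right part of \eqref{lin:eq:estimpsi}. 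Hence the whole expression is $O(\epsilon_n^{1/p}) \to 0$. The subtlety to handle with care is that $w$ need only be in $W^{1,r}_0(\Omega)$, not $C^1$, so $\Vert\nabla w\Vert_\infty$ may be infinite; I would therefore carry out the limit first for $w \in C^\infty_c(\Omega)$ and then extend to all $w \in \mathcal{T}_N(\Omega)$ by density, using that the bilinear diffusion form is continuous on $\mathcal{S}_N(\Omega)\times\mathcal{T}_N(\Omega)$ and that $f \in (W^{1,r}_0(\Omega))'$. This yields \eqref{eq:lin:pbcons}, so $u$ is a weak solution in the sense of Definition \ref{def:weaksol}.
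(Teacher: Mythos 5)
Your proposal is correct and follows essentially the same route as the paper: extract the subsequence from the uniform bounds of Lemma \ref{lin:lem:estlrep}, pass to the limit in the diffusion term by weak convergence, and kill the regularising term via the same H\"older splitting $(\psi'(u_n))^{p-2}|\nabla u_n|^{p-1}=\bigl[(\psi'(u_n))^{p-1}|\nabla u_n|^p\bigr]^{(p-1)/p}\bigl[1/\psi'(u_n)\bigr]^{1/p}$ combined with \eqref{eq:lin:weakbound} and \eqref{lin:eq:estimpsi}, yielding the $O(\epsilon_n^{1/p})$ decay, before concluding by density. The only differences are that you make explicit some steps the paper leaves implicit (diagonal extraction, Rellich--Kondrachov, weak lower semicontinuity for the final bound), which is harmless.
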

\begin{proof}
Using Lemma \ref{lin:lem:estlrep}, there exists a subsequence, again denoted $(\epsilon_n,u_n)_{n\in \mathbb{N}} $, such that the sequence $(u_n)_{ n \in \mathbb{N}}$ converges to a function $u \in S_N(\Omega)$ weakly in $W^{1,q}_0(\Omega)$ for any $q\in (1, N/(N-1))$, strongly  in $L^{\hat q}(\Omega)$  for all $\hat q\in [1,+\infty)$ if $N=2$ and for all $\hat q\in [1, N/(N-2))$ if $N\ge 3$ and almost everywhere in $\Omega$.

Let $ \phi \in  C^\infty_c(\Omega)$ and $n\in \mathbb{N}$. We have
\begin{equation}\label{lin:wsstep1}
\int_\Omega \Lambda \nabla u_n \cdot \nabla \phi \dx  + \epsilon_{n} \int_\Omega | \nabla  \psi  (u_n) |^{p-2} \nabla  u_n \cdot \nabla \phi \dx  =\int_\Omega \phi \d f.
\end{equation}
The weak convergence in $W^{1,q}_0(\Omega)$ for $q\in (1, N/(N-1))$ of the sequence $(u_n)_{n \in \mathbb{N}}$ to $u$ gives
$$
\lim_{n \to \infty} \int_\Omega \Lambda \nabla u_n\cdot \nabla \phi \dx = \int_\Omega \Lambda \nabla u \cdot \nabla \phi \dx.
$$ 
Using Hölder's inequality, we have
\begin{multline*}
\Big| \epsilon_{n} \int_\Omega| \nabla  \psi  (u_n) |^{p-2} \nabla  u_n \cdot \nabla \phi \dx  \Big| \le \epsilon_{n} \| \nabla \phi \|_\infty \int_\Omega(\psi' (u_n))^{p-2}| \nabla  u_n |^{p-1}  \dx
 \\
\le \epsilon_{n} \| \nabla \phi \|_\infty \Big( \int_\Omega \frac {1}{\psi'  (u_n)} \dx \Big)^{ \frac{1}{p}} \Big( \int_\Omega (\psi' (u_n))^{p-1}| \nabla  u_n |^{p}  \dx \Big)^{\frac{p-1}{p}}.
\end{multline*}
Applying  Lemma \ref{lin:lem:estimep} and Lemma \ref{lin:lem:estlrep}, we therefore get
\[
\Big| \epsilon_{n} \int_\Omega| \nabla  \psi  (u_n) |^{p-2} \nabla  u_n \cdot \nabla \phi \dx  \Big| 
\le (\epsilon_{n})^{1/p} \| \nabla \phi \|_\infty \Big( \cter{lin:cste:lr} \Big)^{\frac{1}{p}} \Big( \Vert f\Vert_{M(\Omega)}  \Big)^{\frac{p-1}{p}}.
\]
This shows that, letting $n\to\infty$ in \eqref{lin:wsstep1} with $n\in \mathbb{N}$, we obtain that $u$ satisfies \eqref{eq:lin:pbcons} for any $ \phi \in  C^\infty_c(\Omega)$. We then conclude the proof of the lemma by a density argument.
\end{proof}

\subsection{Convergence to the entropy weak solution}\label{lin:sec:convergenceent}

As announced by the introduction, we now use the definition of the entropy solution given in \cite{ben1995theory} (which is shown to be unique). In the whole section, we consider $f\in L^1(\Omega)$.

\begin{definition}\label{def:lin:entsol}
We define the entropy solution to Problem \eqref{eq:lin:probcont}-\eqref{eq:probbound} as the  measurable function $u$ such that
\begin{enumerate}
 \item $u \in S_N(\Omega)$ and, for all $k>0$, $T_k(u)\in H^1_0(\Omega)$, where we recall that  $T_k$ is the truncation function defined by $T_k(s) = \min(|s|,k){\rm sign}(s)$ for all $s\in\R$ (where ${\rm sign}(s)=1$ if $s\ge 0$ and $-1$ if $s < 0$),
 \item the following holds
 \begin{equation}\label{eq:lin:entropyineq}
\int_{ \Omega} \Lambda \nabla u \cdot \nabla T_k( u-\phi)  \dx  \le 
\int_{\Omega} f\,  T_k(u -\phi) \dx,
\end{equation}
for any $ \phi \in C_c^\infty(\Omega)$ and for any $k >0$.
\end{enumerate}
\end{definition}
\begin{remark}\label{rem:sensok}
Let $ k >0$ and $ \phi \in C_c^\infty(\Omega)$. Using the fact $ \{ | u - \phi | < k \} $ is a subset of $\{ |u | < h := k + \| \phi \|_\infty  \} $ we obtain 
$$
| \nabla T_k( u - \phi) | \le ( | \nabla u | + | \nabla \phi | ) 1_{ | u |< h} \le | \nabla T_h( u) |+ | \nabla \phi |.
$$
 The assumption that $T_h(u) \in H_0^1(\Omega) $ for any $h>0$ implies that $T_k(u - \phi ) \in H_0^1(\Omega) $ for any $k > 0$ and for any $ \phi \in C_c^\infty(\Omega)$. 
\end{remark}

Let us now turn to the convergence to the entropy solution.
\begin{lemma}\label{lin:lem:uepstendtoentsol}
Let $u$ be given by Lemma \ref{lin:u:lem:weakcv}. Then, for all $k>0$, there exists $\ctel{lin:cste:tku}$ only depending on $\underline{\lambda}$, $k$, and $ \| f\|_{1}$ such that
\[
 \| \nabla T_k( u)\|_{2} \le \cter{lin:cste:tku}.
\]
Moreover, $u$ is the entropy weak solution in the sense of Definition \ref{def:lin:entsol}.
\end{lemma}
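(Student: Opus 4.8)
The plan is to pass to the limit $n\to\infty$ in the regularised formulation \eqref{lin:pbuepsilonreg} (with $\mu_{\epsilon_n}\equiv 0$), tested against the truncated function $w=T_k(u_n-\phi)$, which belongs to $W^{1,p}_0(\Omega)$ for every $\phi\in C_c^\infty(\Omega)$ and $k>0$ since $u_n\in W^{1,p}_0(\Omega)$ and $T_k$ is Lipschitz. I would first settle the quantitative claim and the structural requirement of Definition \ref{def:lin:entsol} simultaneously: the estimate $\|\nabla T_k(u_n)\|_2\le\cter{lin:cste:tk}$ proved in Section \ref{lin:sec:estimates} is uniform in $n$ (with $\|f\|_{M(\Omega)}=\|f\|_1$ here, since $f\in L^1(\Omega)$), so $(T_k(u_n))_n$ is bounded in $H^1_0(\Omega)$. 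Combined with the a.e. convergence $u_n\to u$ from Lemma \ref{lin:u:lem:weakcv} and the continuity of $T_k$, this forces $T_k(u_n)\rightharpoonup T_k(u)$ weakly in $H^1_0(\Omega)$, and weak lower semicontinuity of the $H^1_0$ seminorm yields both $T_k(u)\in H^1_0(\Omega)$ and the announced bound $\|\nabla T_k(u)\|_2\le\cter{lin:cste:tku}$. Together with $u\in\mathcal S_N(\Omega)$ from Lemma \ref{lin:u:lem:weakcv} and Remark \ref{rem:sensok} (making the integrals in \eqref{eq:lin:entropyineq} meaningful), this is exactly item~1 of Definition \ref{def:lin:entsol}.

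For the entropy inequality I would examine the three terms of the tested identity separately. Writing $w_n=u_n-\phi$ and using $\nabla T_k(w_n)=(\nabla u_n-\nabla\phi)1_{\{|w_n|<k\}}$, the $p$-Laplace contribution $\epsilon_n\int_\Omega|\nabla\psi(u_n)|^{p-2}\nabla u_n\cdot\nabla T_k(w_n)\dx$ splits as
\[
\epsilon_n\int_{\{|w_n|<k\}}(\psi'(u_n))^{p-2}|\nabla u_n|^{p}\dx-\epsilon_n\int_{\{|w_n|<k\}}(\psi'(u_n))^{p-2}|\nabla u_n|^{p-2}\nabla u_n\cdot\nabla\phi\dx.
\]
The first integral is nonnegative and is simply discarded to produce the inequality sign; the second is controlled, by the same Hölder splitting as in the proof of Lemma \ref{lin:u:lem:weakcv}, by $C\,\epsilon_n^{1/p}$, using \eqref{eq:lin:weakbound} together with the bound on $\|1/\psi'(u_n)\|_{\hat q}$ from \eqref{lin:eq:estimpsi}, hence it tends to $0$. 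Meanwhile the right-hand side $\int_\Omega f\,T_k(w_n)\dx$ converges to $\int_\Omega f\,T_k(u-\phi)\dx$ by dominated convergence, since $f\in L^1(\Omega)$, $|T_k(w_n)|\le k$, and $T_k(w_n)\to T_k(u-\phi)$ a.e.

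The delicate term is the diffusion contribution $\int_\Omega\Lambda\nabla u_n\cdot\nabla T_k(w_n)\dx$, for which only a $\liminf$ inequality is needed, and indeed only such an inequality is available: $\nabla u_n$ converges merely weakly in $L^q(\Omega)^N$ with $q<\frac N{N-1}<2$, while the integrand is quadratic in the gradient. The key is that $\{|w_n|<k\}\subset\{|u_n|<h\}$ with $h=k+\|\phi\|_\infty$, so that $\nabla u_n=\nabla T_h(u_n)$ there, and to decompose
\[
\Lambda\nabla u_n\cdot\nabla T_k(w_n)=\Lambda\nabla T_k(w_n)\cdot\nabla T_k(w_n)+\Lambda\nabla\phi\cdot\nabla T_k(w_n).
\]
Since the functional $v\mapsto\int_\Omega\Lambda\nabla v\cdot\nabla v\dx$ is convex and continuous on $H^1_0(\Omega)$, hence weakly lower semicontinuous, and $T_k(w_n)\rightharpoonup T_k(u-\phi)$ weakly in $H^1_0(\Omega)$, the $\liminf$ of the first integral dominates $\int_\Omega\Lambda\nabla T_k(u-\phi)\cdot\nabla T_k(u-\phi)\dx$; the second integral converges because $\Lambda\nabla\phi\in L^2(\Omega)^N$. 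Recombining through the identity $\Lambda\nabla T_k(u-\phi)\cdot\nabla T_k(u-\phi)=\Lambda\nabla(u-\phi)\cdot\nabla T_k(u-\phi)$ gives
\[
\liminf_{n\to\infty}\int_\Omega\Lambda\nabla u_n\cdot\nabla T_k(w_n)\dx\ge\int_\Omega\Lambda\nabla u\cdot\nabla T_k(u-\phi)\dx,
\]
and passing to the $\liminf$ in the tested identity while collecting the three limits yields \eqref{eq:lin:entropyineq}. The main obstacle is precisely this last term: the weak-$L^q$ convergence of $\nabla u_n$ rules out a direct passage to the limit, and the whole purpose of the truncation-plus-convexity argument is to convert the non-passable quadratic quantity into a weakly lower semicontinuous one, which explains why only the inequality — and not an equality — can be, and has to be, obtained.
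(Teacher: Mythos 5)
Your proposal is correct and follows essentially the same route as the paper: same test function $T_k(u_n-\phi)$, same splitting and H\"older control of the vanishing $p$-Laplace term, and the same decomposition $\Lambda\nabla u_n\cdot\nabla T_k(u_n-\phi)=\Lambda\nabla T_k(u_n-\phi)\cdot\nabla T_k(u_n-\phi)+\Lambda\nabla\phi\cdot\nabla T_k(u_n-\phi)$ combined with weak lower semicontinuity, which the paper isolates as Lemma \ref{lin:lem:liminfzv}. The treatment of the uniform $H^1_0$ bound on the truncates and of the right-hand side is likewise identical in substance.
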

\begin{proof}   The sequence $(T_k(u_n))_{n \ge 0}$ weakly converges to $T_k( u)$ in $H^1_0(\Omega)$ for any $k >0$.
Let $ \phi \in C_c^\infty(\Omega)$; we have $T_k(u_n - \phi) \in W^{1,p}_0(\Omega) $, and we replace $\phi$ with $T_k(u_n - \phi)$ in \eqref{lin:wsstep1}. We obtain
\begin{equation}\label{lin:eq:uepsvarphi}
\int_\Omega \Lambda  \nabla u_n \cdot \nabla T_k( u_n- \phi) \dx  + \epsilon_{n} \int_\Omega | \nabla  \psi  (u_n) |^{p-2} \nabla  u_n \cdot \nabla  T_k( u_n - \phi) \dx  = \int_{\Omega} f\,   T_k( u_n - \phi) \dx.
\end{equation}
Using the fact that the sequence $(u_n)_{n \ge 0}$ converges almost everywhere to $u$ and the fact that $T_k \in L^\infty(\R)$ we obtain
$$
\lim_{ n \to \infty}  \int_\Omega f(x) T_k( u_n - \phi) \dx =  \int_\Omega f(x) T_k( u - \phi) \dx.
$$
Applying Lemma \ref{lin:lem:liminfzv} below, we obtain
$$
\liminf_{n \to \infty} \int_\Omega \Lambda \nabla u_n \cdot \nabla T_k( u_n- \phi) \dx \ge \int_{\Omega} \Lambda\nabla u\cdot\nabla T_k(u- \phi) \dx.
$$
For the second term of the left member, we have
\begin{multline*}
 \epsilon_{n} \int_\Omega | \nabla  \psi  (u_n) |^{p-2} \nabla  u_n \cdot \nabla  T_k( u_n - \phi) \dx  = \epsilon_{n} \int_{\{|u_n - \phi| < k\}} | \nabla  \psi  (u_n) |^{p-2} \nabla  u_n \cdot \nabla  u_n  \dx\\
 - \epsilon_{n} \int_{\{|u_n - \phi| < k\}} | \nabla  \psi  (u_n) |^{p-2} \nabla  u_n \cdot \nabla  \phi\dx. 
\end{multline*}
Observe that the first term of the right-hand side of the above equation is nonnegative.
Applying the same estimates and H\"older's inequality as in the proof of Lemma \ref{lin:u:lem:weakcv}, we get that
$$
\lim_{n \to \infty} \epsilon_{n} \int_{\{|u_n - \phi| < k\}}| \nabla  \psi  (u_n) |^{p-2}  \nabla  u_n \cdot \nabla \phi \dx = 0,
$$
and therefore, that there holds
$$
\liminf_{n \to \infty}\epsilon_{n} \int_\Omega  | \nabla  \psi  (u_n) |^{p-2} \nabla  u_n \cdot \nabla  T_k( u_n - \phi)\dx\ge 0.
$$
This proves, letting $n\to+\infty$ in \eqref{lin:eq:uepsvarphi} with $n\in \mathbb{N}$, that  $u$ satisfies \eqref{eq:lin:entropyineq}, and is therefore the entropy weak solution in the sense of Definition \ref{def:lin:entsol}.
\end{proof}

\begin{lemma}\label{lin:lem:liminfzv} For the sequence provided by Lemma \ref{lin:u:lem:weakcv}, there holds, for all $k>0$,
\begin{equation}\label{lin:eq:liminfzv}
\int_{\Omega} \Lambda\nabla u\cdot\nabla T_k(u- \phi)  \dx \le \liminf_{n \to \infty} \int_\Omega \Lambda \nabla u_n \cdot \nabla T_k( u_n- \phi) \dx,\hbox{ for any }\phi \in C_c^\infty(\Omega).
\end{equation}
\end{lemma}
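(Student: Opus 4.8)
The plan is to reduce \eqref{lin:eq:liminfzv} to the weak lower semicontinuity of the nonnegative quadratic form $w\mapsto \int_\Omega \Lambda\nabla w\cdot\nabla w\dx$ on $H^1_0(\Omega)$. Write $z_n = T_k(u_n-\phi)$ and $z = T_k(u-\phi)$, and set $h = k+\|\phi\|_\infty$. First I would establish that $z_n$ converges to $z$ weakly in $H^1_0(\Omega)$. By Remark \ref{rem:sensok} one has $|\nabla z_n|\le |\nabla T_h(u_n)| + |\nabla\phi|$, so the uniform bound $\|\nabla T_h(u_n)\|_2\le \cter{lin:cste:tk}$ (which holds since each $u_n$ satisfies \eqref{lin:pbuepsilonreg}) shows that $(z_n)$ is bounded in $H^1_0(\Omega)$; since $u_n\to u$ almost everywhere and $s\mapsto T_k(s-\phi)$ is continuous and bounded by $k$, $z_n\to z$ a.e. and in $L^2(\Omega)$ by dominated convergence. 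Hence the only possible weak $H^1_0(\Omega)$-limit of the bounded sequence $(z_n)$ is $z$, which yields the full weak convergence $z_n\rightharpoonup z$.

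Next I would perform the standard algebraic splitting. Using $\nabla u_n = \nabla(u_n-\phi)+\nabla\phi$ together with the identity $\Lambda\nabla(u_n-\phi)\cdot\nabla z_n = \Lambda\nabla z_n\cdot\nabla z_n$, valid because $\nabla z_n = \mathbf{1}_{\{|u_n-\phi|<k\}}\nabla(u_n-\phi)$, one obtains
\[
\int_\Omega \Lambda\nabla u_n\cdot\nabla T_k(u_n-\phi)\dx = \int_\Omega \Lambda\nabla z_n\cdot\nabla z_n\dx + \int_\Omega \Lambda\nabla\phi\cdot\nabla z_n\dx.
\]
Since $\Lambda\nabla\phi\in L^2(\Omega)^N$ and $z_n\rightharpoonup z$ in $H^1_0(\Omega)$, the last integral converges to $\int_\Omega \Lambda\nabla\phi\cdot\nabla z\dx$.

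For the quadratic term I would use the positivity of $\Lambda$ from \eqref{hyplambda}: expanding the nonnegative quantity $\int_\Omega \Lambda\nabla(z_n-z)\cdot\nabla(z_n-z)\dx\ge 0$ and using $\int_\Omega \Lambda\nabla z_n\cdot\nabla z\dx\to \int_\Omega\Lambda\nabla z\cdot\nabla z\dx$ (again by weak convergence) gives $\liminf_n \int_\Omega \Lambda\nabla z_n\cdot\nabla z_n\dx\ge \int_\Omega\Lambda\nabla z\cdot\nabla z\dx$. Combining the two passages to the limit,
\[
\liminf_{n\to\infty}\int_\Omega \Lambda\nabla u_n\cdot\nabla T_k(u_n-\phi)\dx \ge \int_\Omega\Lambda\nabla z\cdot\nabla z\dx + \int_\Omega\Lambda\nabla\phi\cdot\nabla z\dx.
\]
Finally, applying the same splitting to $u$ shows that the right-hand side equals $\int_\Omega\Lambda\nabla u\cdot\nabla T_k(u-\phi)\dx$, which is precisely \eqref{lin:eq:liminfzv}; this last step is legitimate because $\nabla T_k(u-\phi)$ is supported in $\{|u|<h\}$, where $\nabla u=\nabla T_h(u)\in L^2(\Omega)^N$, so all the limiting integrals are well defined even though $\nabla u$ is only in $L^q(\Omega)^N$ for $q<N/(N-1)$.

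I expect the main subtlety to be precisely this well-definedness of the limiting integral together with the weak $H^1_0(\Omega)$-convergence of the truncations $T_k(u_n-\phi)$; once the splitting isolates the convex quadratic part, the liminf inequality follows from the positivity of $\Lambda$ alone and requires no monotonicity or Minty-type argument.
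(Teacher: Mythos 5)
Your proof is correct and follows essentially the same route as the paper: split $\Lambda\nabla u_n\cdot\nabla T_k(u_n-\phi)$ into the nonnegative quadratic part $\Lambda\nabla T_k(u_n-\phi)\cdot\nabla T_k(u_n-\phi)$ plus the linear term in $\nabla\phi$, pass to the limit using weak $L^2$ convergence of $\nabla T_k(u_n-\phi)$ and weak lower semicontinuity of the $\Lambda$-quadratic form, then recombine. You merely supply more detail than the paper on why $T_k(u_n-\phi)\rightharpoonup T_k(u-\phi)$ in $H^1_0(\Omega)$ (the paper asserts this at the start of the proof of Lemma \ref{lin:lem:uepstendtoentsol}), which is a welcome addition rather than a deviation.
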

\begin{proof}
 We have
$$
\int_\Omega \Lambda  \nabla u_n \cdot \nabla T_k( u_n- \phi) \dx = 
\int_\Omega \Lambda  \nabla (u_n - \phi) \cdot \nabla T_k( u_n- \phi) \dx +\int_\Omega \Lambda  \nabla \phi \cdot \nabla T_k( u_n- \phi) \dx.
$$
Using the fact that $ \nabla T_k( u_n - \phi) = \nabla (u_n - \phi) 1_{\{|u_n - \phi| < k\}} $, and that $T_k'( s) = (T_k'( s))^2$, we have
\[
\int_\Omega \Lambda  \nabla (u_n-\phi) \cdot \nabla T_k( u_n- \phi) \dx =  \int_\Omega \Lambda   \nabla T_k (u_n-\phi) \cdot \nabla T_k( u_n- \phi) \dx.
\]
Using the weak convergence of the sequence $( \nabla T_k (u_n-\phi))_{n \ge 0} $ in $L^2(\Omega)^N$ to $T_k (u-\phi)$, we obtain
$$
\int_\Omega \Lambda \nabla T_k (u-\phi) \cdot \nabla T_k( u- \phi) \dx \le 
\liminf_{n \to \infty} \int_\Omega \Lambda  \nabla T_k (u_n-\phi) \cdot \nabla T_k( u_n- \phi) \dx.
$$
We also obtain
$$
\lim_{n \to \infty}\int_\Omega \Lambda \nabla \phi \cdot \nabla T_k( u_n- \phi) \dx =\int_\Omega \Lambda \nabla \phi \cdot \nabla T_k( u- \phi) \dx.
$$
We remark that
$$
\int_\Omega \Lambda \nabla T_k (u-\phi) \cdot \nabla T_k( u- \phi) \dx + \int_\Omega \Lambda \nabla \phi \cdot \nabla T_k( u- \phi) \dx = \int_{\Omega} \nabla T_k  (u- \phi) \cdot \nabla u \dx,
$$
which gives \eqref{lin:eq:liminfzv}.
\end{proof}

\section{The quasilinear problem}\label{sec:nonlinear}

\subsection{Origine and formulations}

Two quasilinear problems, which are extensions of Problem \eqref{eq:lin:probcont}, are classically involving measure data. One is the Richards problem, whose unknown is the pressure $w$ of the water phase within a porous medium containing air and water. It reads, in a simplified version, assuming that $\Lambda$ is the absolute permeability field,
\[
\partial_t\beta(w) - \text{div} ( \Lambda   \nabla w) = g  ~\text{in}~\Omega,
\]
where $\beta~:~\mathbb{R}\to [-1,0]$ is a nonstrictly increasing function (the quantity $\beta(w)+1\in [0,1]$ is called the ``water contents''), satisfying that $\beta(w) = 0$ for all $w\ge 0$. This problem is therefore a parabolic problem which degenerates into an elliptic one in the region where $w\ge 0$. The
right-hand-side represents injection or production terms, accurately modelled using measures along lines in 3D, points in 2D \cite{fab2000mod}.

A second example is the Stefan problems, whose unknown is the internal energy $w$ of a static material which is changing of state. Then the temperature is expressed as a function $\zeta(w)$, which is a nonstrictly increasing function, which remains constant in the range where $0\le w \le L$, where $L$ is the latent heat of change of state. Assuming that heat is provided by electric conductors, once again, a simplified model is

\[
\partial_t w - \text{div} ( \nabla \zeta(w)) = g  ~\text{in}~\Omega,
\]
in which the  right-hand-side is again accurately modelled using measures along lines in 3D, points in 2D.
So both problems can be cast into the common following problem: find a function $w$ such that

\[
\partial_t \beta(w) - \text{div} ( \Lambda   \nabla \zeta(w)) = g ~\text{in}~\Omega,
\]
where $\beta$ and $\zeta$ are two nonstrictly increasing functions, $\Lambda$ is a diffusion field and $f$ is a measure. Assuming that an implicit Euler scheme is used in time, which is done in most cases, wich consists in replacing $\partial_t \beta(w)$ by $(\beta(w)-\beta(w_{\rm prev}))/\delta\!t$ the semi-discrete problem to be solved, with respect to the  at each time step is then under the form

\[
\beta(w) - \text{div} ( \Lambda   \nabla \delta\!t\zeta(w)) = g +  \delta\!t \beta(w_{\rm prev})~\text{in}~\Omega,
\]

Applying the change of variable $v = (\beta+\zeta)(w)$, the problem becomes

\[
\beta\circ (\beta+\zeta)^{-1}(v) - \text{div} ( \Lambda   \nabla \delta\!t\zeta\circ (\beta+\zeta)^{-1}(v)) = g +  \delta\!t \beta(w_{\rm prev})~\text{in}~\Omega.
\]
Then we notice that the functions $ \beta\circ (\beta+\zeta)^{-1}$ and 
$\zeta\circ (\beta+\zeta)^{-1}$ are 1-Lipschitz continuous, the sum of which is equal to the identity function (see \cite{droniou2016conv}). We again denote $\beta,\zeta $ instead of  $ \beta\circ (\beta+\zeta)^{-1}$  and $\zeta\circ (\beta+\zeta)^{-1}$, and we have $\beta = {\rm Id} - \zeta$. Letting $\delta\!t = 1$, denoting by $f =  g +  \delta\!t \beta(w_{\rm prev})$, $b = \beta(v)$ and $u = \zeta(v)$, the problem is now to solve 
\[
 b - \text{div} ( \Lambda   \nabla u) = f,
\]
which is Problem \eqref{eq:probcont}-\eqref{eq:probbound}-\eqref{eq:probcontbetazeta}.
We therefore make the following assumption on the functions $\beta$ and $\zeta$:
\begin{subequations}
\begin{align}
\bullet~  & \zeta : \mathbb{R} \to \mathbb{R} \mbox{ is continuous and non-decreasing and 1-Lipschitz with }\zeta(0) = 0\mbox{ and }\nonumber\\
&\mbox{ there exist $Z_0>0$ and $Z_1>0$ such that $|\zeta(s)| \ge Z_1|s| - Z_0$ for any $s \in \mathbb{R}$.}\label{hypzeta}\\
\bullet~ & \beta = {\rm Id} - \zeta  \mbox{ is therefore continuous, non-decreasing and 1-Lipschitz with }\beta(0) = 0.\label{hypbeta}
\end{align}
\label{eq:hypbetazeta}
\end{subequations}

It is shown in \cite{droniou2016conv} that one can also plug Problem \eqref{eq:probcont}-\eqref{eq:probbound}-\eqref{eq:probcontbetazeta} into the maximal monotone graphs framework.
We define the graph $\mathcal{G}$ and the multivalued operator  $\mathcal{T}:\mathbb{R}\to \mathcal P(\mathbb{R})$ by
\begin{equation}\label{eq:defgraph}
\mathcal{G} = \{(\zeta(s),\beta(s)),s\in\mathbb{R}\}\hbox{ and } \mathcal{T}(s) = \{ t\in\mathbb{R}, (s,t)\in \mathcal{G} \},\mbox{ for all }s\in \mathbb{R}.
\end{equation}
We have the following properties (see \cite{droniou2016conv}): 
\begin{subequations}
\begin{align}
\bullet~  & \mathcal{T}\hbox{  is a maximal monotone operator with domain }\mathbb{R}\hbox{ such that }0\in \mathcal{T}(0),\label{eq:graphun}\\
\bullet~  & \hbox{ there exist }T_1, T_2, T_3, T_4\geq 0\hbox{ such that, for all  }x\in \mathbb{R}\hbox{  and all }y\in \mathcal{T}(x),\nonumber\\&T_{3}|x| - T_{4}\leq |y|\leq T_1|x|+T_2.\label{eq:graphdeux}
\end{align}
\label{eq:hypgraph}
\end{subequations}
It is then shown in \cite{droniou2016conv} that the function $\zeta$ can be identified as the resolvent of $\mathcal{T}$ defined by $(\rm Id + \mathcal{T})^{-1}$ and that \eqref{eq:probcontbetazeta} is equivalent to
\begin{equation}\label{eq:probcontgraph}
b(x)\in \mathcal{T}(u(x))\hbox{ for a.e. }x\in\Omega.
\end{equation}
Note that the maximal monotone graph setting \eqref{eq:probcontgraph} is used in \cite{bp05} for the study of renormalised solutions to the transient version of the problem studied in this paper. In \cite{bp05}, the additional assumption that the reciprocal graph $\mathcal{T}^{-1}$ is a continuous function is used in the existence theorem for identifying the pointwise limit of solutions to regularised problems using compactness arguments (this corresponds in our setting to assume that $\beta$, or equivalently $\rho$, is strictly increasing). 

\subsection{The regularised problem in the quasilinear case}\label{sec:regular}

Instead of writing 
\[
 b - \text{div} ( \Lambda   \nabla u) = f,
\]
with
\[
 b = \beta(v) \mbox{ and }u = \zeta(v),
\]
we use the technique provided in \cite{alt_luckhaus}: in order to express $v$ as a function of $u$, we introduce a given $\epsilon > 0$, and we modify the problem into
\[
 b = \beta(v) \mbox{ and }u = (\epsilon{\rm Id}+ \zeta)(v) = \epsilon v + \zeta(v).
\]
Since the function $\epsilon{\rm Id}+ \zeta$ is continuous, strictly increasing with image $\mathbb{R}$, we can then deduce that
\[
 v =  (\epsilon{\rm Id}+ \zeta)^{-1}(u) \mbox{ and }b=  \beta( (\epsilon {\rm Id}+\zeta)^{-1}(u)).
\]
We therefore consider the  following problem: defining the function $\mu_\epsilon$  by
\begin{equation}\label{eq:defbetaeps}
\forall s\in \mathbb{R}, \ \mu_\epsilon(s) = \beta( (\epsilon {\rm Id}+\zeta)^{-1}(s)),
\end{equation}
find a function $u$ defined on $\Omega$ such that, there holds in a weak sense,
\[
 \mu_\epsilon(u) - \text{div} ( \Lambda   \nabla u) = f.
\]
We now consider the techniques introduced in the preceding sections and we consider the problem
\[
 \mu_\epsilon(u_{\epsilon})- \Div ( \Lambda \nabla u_{\epsilon} +  \epsilon  |\nabla \psi(u_{\epsilon})|^{p-2}\nabla u_{\epsilon}) = f,
 \]
that is  Problem \eqref{lin:probconteps}-\eqref{lin:probboundeps}, in which $\mu_\epsilon$, which is defined by \eqref{eq:defbetaeps}, is continuous and nonstrictly increasing with $\mu_\epsilon(0) = 0$ (this property is the only one used on $\mu_\epsilon$ in the whole Section \ref{sec:regpb}).
Observe that $\epsilon$ plays a double role: it is used at the same time for regularising the dependence between $v$ and $u$ and for regularising the equation by addition of a weighted $p-$Laplace term.

\medskip

We can then directly apply Lemma \ref{lem:lin:existolreg}, which provides the existence of a solution to Problem \eqref{lin:probconteps}-\eqref{lin:probboundeps} in the sense of \eqref{lin:pbuepsilonreg}. The estimates provided by Lemmas \ref{lin:lem:estimep}, \ref{lin:lem:estlrep} and \ref{lin:lem:uepstendtoentsol} hold as well. In addition, accounting from the surlinearity property \eqref{hypzeta} of function $\zeta$, we obtain that the following lemma holds.

\begin{lemma}\label{lem:estimep}
Let  $\epsilon \in (0,Z_1/2)$. Let $u_\epsilon$ be given such that \eqref{lin:pbuepsilonreg} holds with $\mu_\epsilon$ defined by \eqref{eq:defbetaeps}. Then the function defined by
\begin{equation}\label{eq:defveps}
 v_\epsilon = (\epsilon {\rm Id}+\zeta)^{-1}(u_\epsilon)
\end{equation}
satisfies that there exists $\ctel{cte:veps}$ such that, for any ${\hat q}\in (1,+\infty)$ if $N=2$ and  ${\hat q}\in (1, N/(N-2))$ if $N\ge 3$
\begin{equation}\label{eq:esimveps}
\Vert v_\epsilon\Vert_{{\hat q}} \le \cter{cte:veps}.
\end{equation}
\end{lemma}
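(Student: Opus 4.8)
The plan is to reduce the estimate on $v_\epsilon$ to the already-established $\epsilon$-uniform bound on $u_\epsilon$, via a pointwise comparison coming from the surlinearity of $\zeta$. First I would unwind the definition \eqref{eq:defveps}, which reads $u_\epsilon = \epsilon v_\epsilon + \zeta(v_\epsilon)$ almost everywhere in $\Omega$. Since $\zeta$ is non-decreasing with $\zeta(0)=0$ by \eqref{hypzeta} and $\epsilon>0$, the two summands $\epsilon v_\epsilon$ and $\zeta(v_\epsilon)$ both carry the sign of $v_\epsilon$, so their moduli add and $|u_\epsilon| = \epsilon |v_\epsilon| + |\zeta(v_\epsilon)|$ pointwise.

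Next I would apply the surlinearity half of \eqref{hypzeta}, namely $|\zeta(s)| \ge Z_1 |s| - Z_0$, at $s = v_\epsilon$. Combined with the identity above this gives $|u_\epsilon| \ge (\epsilon + Z_1)|v_\epsilon| - Z_0 \ge Z_1 |v_\epsilon| - Z_0$ almost everywhere, hence the pointwise bound $|v_\epsilon| \le (|u_\epsilon| + Z_0)/Z_1$. Note that the restriction $\epsilon \in (0, Z_1/2)$ in the statement is not actually needed for this step: simply discarding the favourable term $\epsilon |v_\epsilon|$ already yields an $\epsilon$-free lower bound. Taking the $L^{\hat q}(\Omega)$ norm and using the triangle inequality then produces $\Vert v_\epsilon\Vert_{\hat q} \le Z_1^{-1}\big(\Vert u_\epsilon\Vert_{\hat q} + Z_0 |\Omega|^{1/\hat q}\big)$.

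It then remains to control $\Vert u_\epsilon\Vert_{\hat q}$ uniformly in $\epsilon$, which is exactly the left part of \eqref{lin:eq:estimpsi} in Lemma \ref{lin:lem:estlrep}. As emphasised just before the statement, that estimate carries over verbatim to the quasilinear setting, because the only feature of $\mu_\epsilon$ exploited throughout Section \ref{sec:regpb} is that it is continuous, non-decreasing with $\mu_\epsilon(0)=0$; for $\mu_\epsilon$ defined by \eqref{eq:defbetaeps} this holds, ensuring $\mu_\epsilon(u_\epsilon)\psi(u_\epsilon)\ge 0$ and thus validating the test-function argument underlying the a priori bounds. For each admissible $\hat q$ we therefore have $\Vert u_\epsilon\Vert_{\hat q}\le \cter{lin:cste:lr}$ with $\cter{lin:cste:lr}$ independent of $\epsilon$, and setting $\cter{cte:veps} = Z_1^{-1}(\cter{lin:cste:lr} + Z_0|\Omega|^{1/\hat q})$ yields \eqref{eq:esimveps}.

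Since every step is a one-line pointwise comparison followed by an application of a prior estimate, I expect no genuine analytic obstacle. The only points deserving care are the sign bookkeeping that upgrades $u_\epsilon = \epsilon v_\epsilon + \zeta(v_\epsilon)$ into the additive relation $|u_\epsilon| = \epsilon|v_\epsilon| + |\zeta(v_\epsilon)|$, and the verification that the $\epsilon$-uniform bound of Lemma \ref{lin:lem:estlrep} indeed transfers to the quasilinear $\mu_\epsilon$ rather than only to the linear case $\mu_\epsilon\equiv 0$.
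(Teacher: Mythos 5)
Your proof is correct and follows essentially the same route as the paper: a pointwise lower bound $|u_\epsilon|\ge Z_1|v_\epsilon|/C - Z_0$ derived from the surlinearity of $\zeta$, followed by the $\epsilon$-uniform $L^{\hat q}$ bound on $u_\epsilon$ from Lemma \ref{lin:lem:estlrep}. The only (valid) refinement is your sign observation giving $|u_\epsilon|=\epsilon|v_\epsilon|+|\zeta(v_\epsilon)|$, which makes the restriction $\epsilon<Z_1/2$ superfluous, whereas the paper uses the reverse triangle inequality $|u_\epsilon|\ge|\zeta(v_\epsilon)|-\epsilon|v_\epsilon|$ and needs that restriction to absorb the $-\epsilon|v_\epsilon|$ term into $\tfrac{Z_1}{2}|v_\epsilon|$.
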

\begin{proof}
Since 
\[
 u_\epsilon =\epsilon v_\epsilon + \zeta(v_\epsilon),
\]
we get, applying \eqref{hypzeta}, 
\[
 |u_\epsilon| \ge |\zeta(v_\epsilon)| -\epsilon |v_\epsilon|  \ge -\epsilon |v_\epsilon| + Z_1|v_\epsilon|-Z_0 \ge \frac {Z_1} 2|v_\epsilon|-Z_0.
\]
Applying \eqref{lin:eq:estimpsi}, that is a bound on $\Vert u_\epsilon\Vert_{{\hat q}} $, this concludes the proof.

\end{proof}

\subsection{Convergence to a weak or entropy weak solution}\label{sec:qlconvergence}

Let us first state the weak sense for a solution $(b,u)$ to \eqref{eq:probcontbetazeta}-\eqref{eq:probbound}.

\begin{definition}[Weak solution to the quasilinear elliptic problem]\label{def:qlweaksol}
We say that  pair of measurable functions $(b,u) $ is a weak solution to Problem \eqref{eq:probcont}-\eqref{eq:probbound}-\eqref{eq:probcontbetazeta} if there exists a function  $v$ measurable on $\Omega$ such that $b = \beta(v)$ and $u = \zeta(  v )$  a.e. in $\Omega$  and
\begin{equation}
u  \in S_N(\Omega)\hbox{ and }\int_\Omega (b\, w +\Lambda\nabla u\cdot\nabla w) \dx = \int_\Omega w \d f,~\text{for any}~w \in \mathcal{T}_N(\Omega).
\label{u:eq:pbcons}\end{equation}
\end{definition}
Let us observe that $u \in S_N(\Omega)$ implies that $u \in L^r(\Omega)$ for any $r\in(1,\frac N {N-1})$. Using Assumption \eqref{hypzeta}-\eqref{hypbeta}, we deduce that $b \in L^r(\Omega)$ for any $r\in(1,\frac N {N-1})$. Note that all $w\in \mathcal{T}_N(\Omega)$ is an element of $C(\overline{\Omega})$.

We can now state a result of existence of a weak solution of the continuous problem, proved by passing to the limit on the regularised problem.

\begin{lemma} \label{u:lem:weakcv} Let  $(\epsilon_n)_{n \in \mathbb{N}} $ be a sequence of positive numbers  which converges to zero, and let  $u_{n}\in W^{1,p}_0(\Omega)$ be such that \eqref{lin:pbuepsilonreg} holds with $\epsilon = \epsilon_n$ and $\mu_{\epsilon_n}$ given by \eqref{eq:defbetaeps}.

Then there exist a subsequence of  $(\epsilon_n,u_n)_{n\in \mathbb{N}} $, again denoted $(\epsilon_n,u_n)_{n\in \mathbb{N}} $, $u\in \mathcal{S}_N(\Omega)$ and $v\in L^{\hat q}(\Omega)$, such that the sequence $(u_n)_{ n \in \mathbb{N}}$ converges to $u \in S_N(\Omega)$ weakly in $W^{1,q}_0(\Omega)$ for any $q\in (1, N/(N-1))$, strongly  in $L^{\hat q}(\Omega)$  and almost everywhere in $\Omega$ and $ v_n = (\epsilon_n {\rm Id}+\zeta)^{-1}(u_n)$ weakly converges to $v$ in $L^{\hat q}(\Omega)$ for all $\hat q\in [1,+\infty)$ if $N=2$ and for all $\hat q\in [1, N/(N-2))$ if $N\ge 3$.

Moreover, we have $u = \zeta(v)$ and, letting $b = \beta(v)$, the pair $(b,u)$ is a weak solution in the sense of Definition \ref{def:qlweaksol}, and, for any $ 1 < q < \frac{N}{N-1}$, there exists $\ctel{ql:cste:lru}$ only depending on $q$, $N$, $\underline{\lambda}$ and $\Vert f\Vert_{M(\Omega)}$ such that
 \[
 \| \nabla u\|_{q} \le \cter{ql:cste:lru} .
\]
\end{lemma}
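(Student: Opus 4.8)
The plan is to obtain the weak solution of the quasilinear problem as a limit of the solutions $u_n$ to the regularised problem \eqref{lin:pbuepsilonreg} with $\mu_{\epsilon_n}$ given by \eqref{eq:defbetaeps}. Since this $\mu_{\epsilon_n}$ is continuous, nonstrictly increasing with $\mu_{\epsilon_n}(0)=0$, the estimates of Lemmas \ref{lin:lem:estimep} and \ref{lin:lem:estlrep} apply uniformly in $n$, so exactly as in the linear case (Lemma \ref{lin:u:lem:weakcv}) I would extract a subsequence along which $u_n\to u$ weakly in $W^{1,q}_0(\Omega)$ for every $q\in(1,N/(N-1))$, strongly in $L^{\hat q}(\Omega)$, and almost everywhere, with $u\in S_N(\Omega)$ and the gradient bound $\Vert\nabla u\Vert_q\le \cter{ql:cste:lru}$ inherited from \eqref{lin:eq:estimmupsi}. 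The new ingredient compared to the linear case is the treatment of the term $\int_\Omega b_n w\dx$ with $b_n=\mu_{\epsilon_n}(u_n)$, which is handled through $v_n=(\epsilon_n\,{\rm Id}+\zeta)^{-1}(u_n)$.

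First I would establish weak compactness of $(v_n)$: Lemma \ref{lem:estimep} gives the uniform bound $\Vert v_n\Vert_{\hat q}\le\cter{cte:veps}$, so up to a further subsequence $v_n\rightharpoonup v$ weakly in $L^{\hat q}(\Omega)$ for the admissible range of $\hat q$. Writing $u_n=\epsilon_n v_n+\zeta(v_n)$ and using that $\epsilon_n v_n\to 0$ in $L^{\hat q}(\Omega)$ (because $\epsilon_n\to0$ and $\Vert v_n\Vert_{\hat q}$ is bounded), I get that $\zeta(v_n)$ converges to $u$ in the relevant topology. Since $b_n=\beta(v_n)=v_n-\zeta(v_n)$ (using $\beta={\rm Id}-\zeta$ from \eqref{hypbeta}), the uniform bound on $v_n$ together with the $1$-Lipschitz continuity of $\zeta$ and $\beta$ gives a uniform $L^{\hat q}$ bound on $b_n$, so $b_n\rightharpoonup b$ weakly in $L^{\hat q}(\Omega)$ for some $b$; passing to the limit in the identity $b_n=v_n-\zeta(v_n)$ one checks $b=v-u$, i.e. $b=\beta(v)$ once the identification $u=\zeta(v)$ is made.

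The main obstacle is precisely the nonlinear identification $u=\zeta(v)$ (equivalently $b=\beta(v)$), since $\zeta$ is only nonstrictly increasing and Lipschitz, and we only have weak convergence of $v_n$ but strong convergence of $u_n$. This is where Minty's trick enters, as announced in the introduction: I would exploit the monotonicity of the maximal monotone graph $\mathcal{G}=\{(\zeta(s),\beta(s))\}$ associated with $\mathcal{T}$ in \eqref{eq:defgraph}. Concretely, for any $s\in\mathbb{R}$ the monotonicity of the pair $(\zeta,\beta)$ yields $(u_n-\zeta(s))\,(b_n-\beta(s))\ge 0$ pointwise (this encodes $b_n\in(\epsilon_n{\rm Id}+\mathcal{T})(u_n)$ up to the $\epsilon_n$ perturbation); integrating against a nonnegative test weight, using the strong convergence of $u_n$ and the weak convergence of $b_n$, and then applying Minty's monotonicity argument with $s$ replaced by a suitable variation, I would conclude that $(u,b)$ lies in the graph almost everywhere, that is $b\in\mathcal{T}(u)$ a.e., which by \eqref{eq:probcontgraph} is equivalent to the existence of $v$ with $u=\zeta(v)$ and $b=\beta(v)$. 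Care is needed because of the extra $\epsilon_n v_n$ term, but since it tends to $0$ strongly it does not affect the limiting inequality.

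Finally I would pass to the limit in the weak formulation itself. Taking $w\in C^\infty_c(\Omega)$ in \eqref{lin:pbuepsilonreg}, the diffusion term $\int_\Omega\Lambda\nabla u_n\cdot\nabla w\dx$ converges to $\int_\Omega\Lambda\nabla u\cdot\nabla w\dx$ by the weak $W^{1,q}_0$ convergence, the zeroth-order term $\int_\Omega b_n w\dx$ converges to $\int_\Omega b\,w\dx$ by the weak $L^{\hat q}$ convergence of $b_n$ (here $w\in L^{\infty}$ suffices), and the vanishing weighted $p$-Laplace term is controlled exactly as in Lemma \ref{lin:u:lem:weakcv}, namely by the H\"older estimate bounding it by $C\,(\epsilon_n)^{1/p}$ which tends to $0$ owing to \eqref{eq:lin:weakbound} and \eqref{lin:eq:estimpsi}. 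A density argument extends the identity from $C^\infty_c(\Omega)$ to all $w\in\mathcal{T}_N(\Omega)$, giving \eqref{u:eq:pbcons} and completing the proof that $(b,u)$ is a weak solution in the sense of Definition \ref{def:qlweaksol}.
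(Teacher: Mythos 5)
Your overall architecture is the paper's: uniform estimates from Lemmas \ref{lin:lem:estimep}, \ref{lin:lem:estlrep} and \ref{lem:estimep} give the compactness of $(u_n)$ and $(v_n)$; the identity $u_n=\epsilon_n v_n+\zeta(v_n)$ with $\epsilon_n v_n\to 0$ gives $\zeta(v_n)\to u$; a Minty-type argument identifies $u=\zeta(v)$; and $b_n=(1+\epsilon_n)v_n-u_n$ then converges weakly to $v-u=\beta(v)$, after which the passage to the limit in \eqref{lin:pbuepsilonreg} is as in Lemma \ref{lin:u:lem:weakcv}. The passage to the limit in the weak formulation and the treatment of the vanishing term are fine.

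The gap is in your version of the Minty step. You propose to pass to the limit in $\int_\Omega(u_n-\zeta(s))(b_n-\beta(s))\varphi\dx$ ``using the strong convergence of $u_n$ and the weak convergence of $b_n$''. The obstruction is the product term $\int_\Omega u_n b_n\varphi\dx$: both $u_n$ and $b_n$ are controlled only in $L^{\hat q}(\Omega)$ with $\hat q<N/(N-2)$, and a strongly convergent sequence paired with a weakly convergent one passes to the limit only if the exponents are conjugate. This requires some $\hat q<N/(N-2)$ with $\hat q/(\hat q-1)<N/(N-2)$ as well, i.e. $N/(N-2)>2$, which fails for every $N\ge 4$. (There is no pointwise rescue either: since $\zeta$ has flat parts, the a.e. convergence of $u_n$ gives no a.e. convergence of $v_n$ or $b_n$.) This is exactly the difficulty that the paper's Lemma \ref{lem:minty} is built to circumvent: instead of pairing $u_n$ against $b_n$, it pairs the weakly convergent $v_n-w$ against $P_\theta(\zeta_n(v_n))-P_\theta(\zeta_n(w))$, where the power $\theta$ is chosen so that $\theta\,q_2/(q_2-1)\le\min(q_1,q_2)$; the composition with $P_\theta$ lowers the growth just enough that this quantity converges \emph{strongly} in the conjugate space $L^{q_2/(q_2-1)}(\Omega)$, and the conclusion $u=\zeta(v)$ follows. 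A secondary, repairable slip: the pointwise inequality you invoke should read $(u_n-\zeta_n(s))(b_n-\beta(s))\ge 0$ with $\zeta_n=\epsilon_n{\rm Id}+\zeta$; as written, with $\zeta(s)$, the factor $\epsilon_n v_n$ can make it false, although for fixed $s$ this is harmless in the limit. The exponent issue, however, is a genuine missing idea for $N\ge 4$, and you should route the identification through Lemma \ref{lem:minty} (or an equivalent device) rather than the raw graph pairing.
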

\begin{proof}
Applying Lemmas \ref{lin:lem:estimep} and  \ref{lin:lem:estlrep}, we construct a subsequence of $(\epsilon_n,u_n)_{n\in \mathbb{N}} $ of the initial sequence, that we again denote identically, and we select  $u\in \mathcal{S}_N(\Omega)$ such that the chosen sequence $(u_n)_{ n \in \mathbb{N}}$ converges to $u \in S_N(\Omega)$ weakly in $W^{1,q}_0(\Omega)$ for any $q\in (1, N/(N-1))$, strongly  in $L^{\hat q}(\Omega)$ for all $\hat q\in [1,+\infty)$ if $N=2$ and for all $\hat q\in [1, N/(N-2))$ if $N\ge 3$,  and almost everywhere in $\Omega$.
 
\medskip 

Using Lemma \ref{lem:estimep}, since for $n$ large enough, we have $\epsilon_n\in (0,Z_1/2)$, we can extract from this sequence another subsequence, again denoted $(\epsilon_{n},u_n)_{n\in\mathbb{N}}$, such that $v_{n} = (\epsilon_n {\rm Id}+\zeta)^{-1}(u_n)$ (therefore  $ u_{n} = \zeta_{n}(v_{n})$ with $\zeta_n =\epsilon_n {\rm Id}+\zeta$)  converges to some $v\in {\mathcal S}_N(\Omega)$ for the weak topology of $L_{\hat q}(\Omega)$ with ${\hat q}\in (1,+\infty)$ if $N=2$ and  ${\hat q}\in (1, N/(N-2))$ if $N\ge 3$.

We have now to check that $u = \zeta(v)$ and that $\mu_{\epsilon_{n}}(u_{n})$ converges to $\beta(v)$ for the weak topology of $W^{1,p}_0(\Omega)$.

\medskip 

Applying Lemma \ref{lem:minty}, which states a consequence of Minty's trick, we indeed prove that $u = \zeta(v)$, since $\zeta_{n} = \zeta + \epsilon_{n}{\rm Id}$ satisfies the hypotheses of the lemma.

Turning to $\mu_{\epsilon_{n}}(u_{n})$, the relation 
\[
\forall s\in \mathbb{R}, \ \mu_\epsilon(s) = \beta( (\epsilon {\rm Id}+\zeta)^{-1}(s)) =  (\epsilon {\rm Id}+\zeta)^{-1}(s) - \zeta(\epsilon {\rm Id}+\zeta)^{-1}(s)) = (1+\epsilon)(\epsilon {\rm Id}+\zeta)^{-1}(s) - s.
\]
issued from \eqref{eq:defbetaeps}, leads to
\[
 \mu_{\epsilon_{n}}(u_{n}) = \beta(v_{n})= (1+\epsilon_{n})v_{n}- u_{n},
\]
which proves the convergence of $\mu_{\epsilon_{n}}(u_{n})$ to $v - \zeta(v) = \beta(v) =b$ for the weak topology of $W^{1,p}_0(\Omega)$.
\end{proof}

\begin{lemma}[Minty in $L^p$ space] \label{lem:minty}
Let $(\zeta_n)_{n\in\mathbb{N}}\subset C(\R)$ be a sequence of (non strictly) increasing Lipschitz-continuous function with the same Lipschitz constant $L_\zeta$, with $\zeta_n(0) = 0$ for all $n\ge 0$, which simply converges to a function $\zeta$ (which is therefore Lipschitz-continuous function with Lipschitz constant $L_\zeta$). Let ${q_1},{q_2}\in(1,2)$ be given, and let $(v_n)_{n\in\mathbb{N}}$ be a sequence of elements of $L^{q_2}(\Omega)$ such that
\begin{itemize}
 \item the sequence $(v_n)_{n\in\mathbb{N}}$ weakly converges to $v$ in $L^{q_2}(\Omega)$,
 \item the sequence $(\zeta_n(v_n))_{n\in\mathbb{N}}$ converges (strongly) to $u$ in $L^{q_1}(\Omega)$.
\end{itemize}
Then $u = \zeta(v)$ a.e. in $\Omega$.
\end{lemma}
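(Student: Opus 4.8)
The plan is to avoid ever passing to the limit in an integral involving the product $v_n\,\zeta_n(v_n)$ — this is the source of all the difficulty, since $v_n$ is only bounded in $L^{q_2}(\Omega)$ with $q_2<2$, so that $\int_\Omega v_n\zeta_n(v_n)\dx$ need not even be finite — and to establish instead the pointwise Minty inequality
\[
(u(x)-\zeta(t))(v(x)-t)\ge 0 \quad\text{for a.e. } x\in\Omega \text{ and every } t\in\mathbb{R}.
\]
Once this is available, the conclusion $u=\zeta(v)$ a.e.\ follows from the maximality of the monotone graph of the continuous nondecreasing function $\zeta$: fixing $x$ and letting $t\to v(x)^+$ and $t\to v(x)^-$ forces $u(x)\le\zeta(v(x))$ and $u(x)\ge\zeta(v(x))$ respectively.

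First I would exploit that, since each $\zeta_n$ is nondecreasing, $\zeta_n(v_n)-\zeta_n(w)$ and $T_k(v_n-w)$ have the same sign for every $w$ and every $k>0$, so that for all $\theta\in L^\infty(\Omega)$ with $\theta\ge 0$ the integrand below is nonnegative, and moreover bounded by $k\,|\zeta_n(v_n)-\zeta_n(w)|$:
\[
\int_\Omega \theta\,(\zeta_n(v_n)-\zeta_n(w))\,T_k(v_n-w)\dx \ge 0.
\]
The role of the truncation $T_k$ is precisely that the second factor stays bounded in $L^\infty(\Omega)$, which lets me pass to the limit $n\to\infty$: the sequence $\zeta_n(v_n)$ converges to $u$ strongly in $L^{q_1}(\Omega)\hookrightarrow L^1(\Omega)$; for a bounded $w$ the pointwise convergence of the monotone $\zeta_n$ to the continuous limit $\zeta$ is uniform on the compact range of $w$, whence $\zeta_n(w)\to\zeta(w)$ in $L^\infty(\Omega)$; and $T_k(v_n-w)$ converges, along a subsequence, weakly-$*$ in $L^\infty(\Omega)$ to some $g_k^{(w)}$ with $\|g_k^{(w)}\|_\infty\le k$. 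Pairing the strong $L^1$ limit against the weak-$*$ limit gives $\int_\Omega \theta\,(u-\zeta(w))\,g_k^{(w)}\dx\ge 0$ for every admissible $\theta$, hence $(u-\zeta(w))\,g_k^{(w)}\ge 0$ a.e.

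It then remains to remove the cutoff. Since a sequence bounded in $L^{q_2}(\Omega)$ with $q_2>1$ is uniformly integrable, the tails $(v_n-w)-T_k(v_n-w)$ tend to $0$ in $L^{q_2}(\Omega)$ as $k\to\infty$, uniformly in $n$; combined with $v_n-w\rightharpoonup v-w$ in $L^{q_2}(\Omega)$ and the weak lower semicontinuity of the norm, this yields $g_k^{(w)}\to v-w$ in $L^{q_2}(\Omega)$, hence a.e.\ along a subsequence. Letting $k\to\infty$ in $(u-\zeta(w))\,g_k^{(w)}\ge 0$ produces $(u-\zeta(w))(v-w)\ge 0$ a.e.; specialising $w$ to the constants $w\equiv t$ with $t\in\mathbb{Q}$, and using the continuity of $\zeta$ to fill in all real $t$ on a common full-measure set, delivers the pointwise Minty inequality above.

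The main obstacle, and the reason a textbook application of Minty's trick fails here, is exactly the uncontrolled term $\int_\Omega v_n\zeta_n(v_n)\dx$: with only weak convergence of $v_n$ in $L^{q_2}(\Omega)$ and strong convergence of $\zeta_n(v_n)$ in $L^{q_1}(\Omega)$, the exponents are not conjugate ($\tfrac1{q_1}+\tfrac1{q_2}>1$), so this global product has no limit and may be infinite. The truncation device is what resolves this, replacing it by the bounded quantities $(\zeta_n(v_n)-\zeta_n(w))\,T_k(v_n-w)$ and removing the cutoff only after every inequality has been reduced to an almost-everywhere statement, where no integrability of $u\,v$ is required. The one technical fact to record along the way is the uniform-on-compacts convergence $\zeta_n\to\zeta$, which is automatic for monotone functions converging pointwise to a continuous limit.
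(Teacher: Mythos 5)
Your argument is correct in substance, but it follows a genuinely different route from the paper. The paper handles the non-conjugate exponents by damping the \emph{first} factor: it composes $\zeta_n$ with the odd power $P_\theta(s)=\mathrm{sign}(s)|s|^\theta$ for $\theta$ small enough that $P_\theta(\zeta_n(v_n))$ lands in $L^{q_2/(q_2-1)}(\Omega)$, the dual of the space where $v_n$ converges weakly; since $P_\theta\circ\zeta_n$ is still nondecreasing, the global monotonicity inequality $\int_\Omega(P_\theta(\zeta_n(v_n))-P_\theta(\zeta_n(w)))(v_n-w)\dx\ge 0$ passes to the limit by strong/weak pairing, and the classical Minty perturbation $w=v+t\varphi$, $t\to 0$, yields $P_\theta(u)=P_\theta(\zeta(v))$, hence $u=\zeta(v)$. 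You instead truncate the \emph{second} factor via $T_k(v_n-w)$, localize with a bounded weight, reduce everything to the pointwise inequality $(u-\zeta(t))(v-t)\ge 0$ for all $t$, and conclude by maximality of the monotone graph of $\zeta$. Your route avoids the choice of the exponent $\theta$ and works with constant comparison functions only, at the price of a diagonal extraction of the weak-$*$ limits $g_k^{(w)}$ and of identifying their limit as $k\to\infty$; the paper's route avoids all pointwise bookkeeping but requires checking that $\theta$ can be chosen with $1<\theta q_2/(q_2-1)\le\min(q_1,q_2)$ and that the relevant dominations hold.

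One step of yours is misstated, though harmlessly: boundedness of $(v_n)$ in $L^{q_2}(\Omega)$ does \emph{not} give that the tails $(v_n-w)-T_k(v_n-w)$ tend to $0$ in $L^{q_2}(\Omega)$ uniformly in $n$ (take $v_n=n\mathbf{1}_{E_n}$ with $|E_n|=n^{-q_2}$: the truncated tail has $L^{q_2}$-norm $1$ for every $n>k$). What Chebyshev plus H\"older actually give is uniform smallness of the tails in $L^r(\Omega)$ for any $r<q_2$ (in particular in $L^1(\Omega)$), and since weak convergence in $L^{q_2}(\Omega)$ implies weak convergence in $L^1(\Omega)$ on the bounded set $\Omega$, weak lower semicontinuity of the $L^1$-norm already yields $g_k^{(w)}\to v-w$ in $L^1(\Omega)$ as $k\to\infty$, hence a.e.\ along a subsequence, which is all your argument uses. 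With that correction the proof goes through.
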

\begin{proof} We first extract a subsequence of $(v_n,\zeta_n(v_n))_{n\in\mathbb{N}}$ a subsequence, again denoted $(v_n,\zeta_n(v_n))_{n\in\mathbb{N}}$, such that $\zeta_n(v_n)$ converges to $u$ a.e. and such that $|\zeta_n(v_n)|^{q_1}$ is dominated in $L^1(\Omega)$.
Let $\theta$ be such that $1<\theta \frac {{q_2}}{{q_2}-1} \le \min({q_1},{q_2})$, for any $s\in\R$, and let us denote by $P_\theta(s) = s^\theta$ if $s\ge 0$ and $P_\theta(s) = -(-s)^\theta$ if $s<0$.
This choice of $\theta$ ensures that:
 \begin{enumerate}
  \item $ | P_\theta(\zeta_n(v_n))|^{{q_2}/({q_2}-1)} \le \max(1,|\zeta_n(v_n)|)^{q_1}$ a.e., which implies that by dominated convergence $(P_\theta(\zeta_n(v_n)))_{n\in\mathbb{N}}$ converges in $L^{{q_2}/({q_2}-1)}(\Omega)$ to $P_\theta(u)$,
  \item for any $w\in L^{q_2}(\Omega)$, using $|\zeta_n(w)|\le L_\zeta|w|$ a.e., we have  $| P_\theta(\zeta_n(w))|^{{q_2}/({q_2}-1)} \le \max(1, (L_\zeta|w|)^{q_2})$ a.e., and, using the simple convergence of $\zeta_n$ to $\zeta$, we then get by dominated convergence that $(P_\theta(\zeta_n(w)))_{n\in\mathbb{N}}$ converges in $L^{{q_2}/({q_2}-1)}(\Omega)$ to $P_\theta(\zeta(w))$.
 \end{enumerate}
Using the fact that, for any $n \ge 0$, the function $P_\theta\circ\zeta_n $ is (nonstrictly) increasing, we obtain for any $w \in L^{q_2}(\Omega)$,
$$
\int_\Omega ( P_\theta(\zeta_n( v_n)) - P_\theta(\zeta_n(w))) ( v_n  -w) \dx \ge 0.
$$
Notice that, in the above expression, $P_\theta(\zeta_n( v_n)) - P_\theta(\zeta_n(w))\in L^{{q_2}/({q_2}-1)}(\Omega)$. It is then possible to let $n\to\infty$ in the previous inequality, which leads, by strong/weak convergence, to
$$
 \int_\Omega (P_\theta(u)  - P_\theta(\zeta(w))) (v-w) \dx \ge 0.
$$
We let $w = v + t \varphi$ where $ \varphi \in C_c^\infty(\Omega) $ and $t\in(0,1)$, and we obtain
$$
t  \int_\Omega (P_\theta(u)  - P_\theta(\zeta( v+t\varphi))) \varphi \dx \ge 0.
$$
Dividing by $t$ and using that $ |P_\theta(\zeta( v+t\varphi))|$ is dominated in $L^{1}(\Omega)$ by $ P_\theta(L_\zeta(|v|+|\varphi|))$, we obtain by letting $t\to 0$ and using dominated convergence
$$
 \int_\Omega  (P_\theta(u)  - P_\theta(\zeta( v))) \varphi \dx \ge 0.
$$
Since the above inequality also holds changing $\varphi$ in $-\varphi$, it is therefore an equality, which leads, since $\varphi$ is arbitrary, to 
$$
P_\theta(u)  - P_\theta(\zeta( v)) = 0~\text{a.e in}~\Omega.
$$
The previous identity thus gives
$$
u =\zeta(v)~\text{a.e in}~\Omega.
$$
\end{proof}

Let us now give the sense for the entropy solution  $(b,u)$ to \eqref{eq:probcontbetazeta}-\eqref{eq:probbound} in the case $f\in L^1(\Omega)$. The proof of uniqueness of this solution is done in Section \ref{sec:uniqsol}.

\begin{definition}[Entropy solution to the quasilinear elliptic problem with particular right-hand sides]\label{def:qlentsol}

We assume that $f\in L^1(\Omega)$.
We define an entropy solution of Problem \eqref{eq:probcont}-\eqref{eq:probbound}-\eqref{eq:probcontbetazeta} as a pair of measurable functions $(b,u) $ if there exists a function  $v$ measurable on $\Omega$ such that $b = \beta(v)$ and $u = \zeta(  v )$  a.e. in $\Omega$  and
\begin{enumerate}
 \item $u \in S_N(\Omega)$ and, for all $k>0$, $T_k(u)\in H^1_0(\Omega)$, where  $T_k$ is the truncation function defined by $T_k(s) = \min(|s|,k){\rm sign}(s)$ for all $s\in\R$ (where ${\rm sign}(s)=1$ if $s\ge 0$ and $-1$ if $s < 0$),
 \item the following holds
 \begin{equation}\label{entropyineq}
\int_{\Omega}b \, T_k(u -\phi)  \dx + \int_{ \Omega} \Lambda \nabla u \cdot \nabla T_k( u-\phi)  \dx  \le 
\int_{\Omega} f\,  T_k(u -\phi) \dx,
\end{equation}
for any $ \phi \in C_c^\infty(\Omega)$ and for any $k >0$.
\end{enumerate}
\end{definition}

We now turn to the existence result, the proof of which is again using the limit of the regularised problem.

\begin{lemma}\label{lem:uepstendtoentsol}
 Let $(b,u)$ be given by Lemma \ref{u:lem:weakcv}. Then, for all $k>0$, there exists $\ctel{cste:tku}$ only depending on $\underline{\lambda}$, $k$ and $ \| f\|_{1}$ such that
\[
 \| \nabla T_k (u)\|_{2} \le \cter{cste:tku}.
\]
Moreover, the pair $(b,u)$ is the entropy weak solution in the sense of Definition \ref{def:qlentsol}.
\end{lemma}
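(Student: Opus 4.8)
The plan is to mimic the proof of Lemma \ref{lin:lem:uepstendtoentsol}, the only genuinely new ingredient being the treatment of the zero-order term $\mu_{\epsilon_n}(u_n)=b_n$.

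First I would establish the gradient bound. The estimates of Section \ref{lin:sec:estimates} are valid for an arbitrary nonstrictly increasing $\mu_\epsilon$ with $\mu_\epsilon(0)=0$ (this is the only property used there), so the uniform bound $\Vert\nabla T_k(u_n)\Vert_2\le C$ holds with a constant depending only on $\underline{\lambda}$, $k$ and $\Vert f\Vert_{M(\Omega)}$; since here $f\in L^1(\Omega)$ we have $\Vert f\Vert_{M(\Omega)}=\Vert f\Vert_1$. Consequently $(T_k(u_n))_n$ is bounded in $H^1_0(\Omega)$ and, using the a.e.\ convergence $u_n\to u$, it converges weakly to $T_k(u)$ in $H^1_0(\Omega)$; weak lower semicontinuity of the $L^2$-norm of the gradient then yields $\Vert\nabla T_k(u)\Vert_2\le\cter{cste:tku}$.

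Next I would derive the entropy inequality. Since $u_n-\phi\in W^{1,p}_0(\Omega)$ and $T_k$ is Lipschitz with $T_k(0)=0$, the function $T_k(u_n-\phi)$ is admissible in \eqref{lin:pbuepsilonreg}, giving
\begin{multline*}
\int_\Omega \mu_{\epsilon_n}(u_n)\,T_k(u_n-\phi)\dx
+\int_\Omega \Lambda\nabla u_n\cdot\nabla T_k(u_n-\phi)\dx\\
+\epsilon_n\int_\Omega|\nabla\psi(u_n)|^{p-2}\nabla u_n\cdot\nabla T_k(u_n-\phi)\dx
=\int_\Omega f\,T_k(u_n-\phi)\dx.
\end{multline*}
The diffusion and vanishing $p$-Laplace terms are handled exactly as in Lemma \ref{lin:lem:uepstendtoentsol}: Lemma \ref{lin:lem:liminfzv} (whose proof only uses the weak convergences of Lemma \ref{u:lem:weakcv}) gives $\liminf_n\int_\Omega\Lambda\nabla u_n\cdot\nabla T_k(u_n-\phi)\dx\ge\int_\Omega\Lambda\nabla u\cdot\nabla T_k(u-\phi)\dx$, while splitting the $p$-Laplace term over $\{|u_n-\phi|<k\}$ isolates a nonnegative part and a remainder tending to $0$ by Hölder and the bounds of Lemmas \ref{lin:lem:estimep}--\ref{lin:lem:estlrep}, so its liminf is $\ge 0$. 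The right-hand side converges to $\int_\Omega f\,T_k(u-\phi)\dx$ by dominated convergence, since $u_n\to u$ a.e., $|T_k|\le k$ and $f\in L^1(\Omega)$.

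The new term is $\int_\Omega b_n\,T_k(u_n-\phi)\dx$ with $b_n=\mu_{\epsilon_n}(u_n)$. By Lemma \ref{u:lem:weakcv}, $b_n$ converges weakly to $b$ in $L^{\hat q}(\Omega)$ for some $\hat q>1$, while $T_k(u_n-\phi)$ is bounded by $k$ and converges a.e.\ to $T_k(u-\phi)$, hence strongly in $L^{\hat q/(\hat q-1)}(\Omega)$ by dominated convergence; the weak--strong pairing then gives $\int_\Omega b_n\,T_k(u_n-\phi)\dx\to\int_\Omega b\,T_k(u-\phi)\dx$. Writing the tested identity as (diffusion)$+$($p$-Laplace)$=\int_\Omega f\,T_k(u_n-\phi)\dx-\int_\Omega b_n\,T_k(u_n-\phi)\dx$, whose right-hand side converges to $\int_\Omega f\,T_k(u-\phi)\dx-\int_\Omega b\,T_k(u-\phi)\dx$, and invoking the superadditivity of $\liminf$ together with the two liminf bounds above, I obtain
\[
\int_\Omega\Lambda\nabla u\cdot\nabla T_k(u-\phi)\dx
\le\int_\Omega f\,T_k(u-\phi)\dx-\int_\Omega b\,T_k(u-\phi)\dx,
\]
which is precisely \eqref{entropyineq}. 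The main obstacle is the passage to the limit in the zero-order term: everything else transcribes the linear case verbatim, but here one must use that $b_n$ converges weakly in a space whose dual exponent is \emph{finite}, which is exactly what the integrability $\hat q>1$ of Lemma \ref{u:lem:weakcv} guarantees, so that the weak--strong pairing applies and the limit of the $b$-term can be identified.
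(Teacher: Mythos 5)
Your proposal is correct and follows essentially the same route as the paper: reuse the linear-case argument verbatim for the gradient bound, the diffusion term (via Lemma \ref{lin:lem:liminfzv}), the vanishing $p$-Laplace term and the right-hand side, and treat the zero-order term by pairing the weak convergence of $\mu_{\epsilon_n}(u_n)=b_n$ to $b$ (from Lemma \ref{u:lem:weakcv}) with the strong convergence of the bounded sequence $T_k(u_n-\phi)$. Your use of the $L^{\hat q}$--$L^{\hat q/(\hat q-1)}$ duality is in fact slightly more careful than the paper, which invokes weak $L^2$ convergence even though $\hat q<2$ when $N\ge 4$.
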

\begin{proof}
We follow the proof of Lemma \ref{lin:lem:uepstendtoentsol}. The only difference is the convergence of the first term.
This convergence  is a consequence of the convergence of $\int_\Omega \mu_\epsilon(u_\epsilon) T_k(u_\epsilon -  \phi)$ to $\int_\Omega b T_k(u -  \phi)$, owing to Lemma \ref{lem:estimep}, and to the fact that $ \mu_\epsilon(u_\epsilon)$ weakly converges to $b$ in $L^2(\Omega)$.
\end{proof}

\section{Uniqueness of the entropy solution for the quasilinear problem}\label{sec:uniqsol}

The following lemma enables the use of a larger test function space in the entropy weak sense.
\begin{lemma}[Test functions in $H^1_0(\Omega)\cap L^\infty(\Omega)$]\label{lem:hunlinf}
We assume that  Assumptions \eqref{eq:hypomlambf} and \eqref{eq:hypbetazeta} hold.
Let us assume that $f \in L^1(\Omega)$. Let $u$ be an entropy solution in the sense of Definition \ref{def:qlentsol}. Then \eqref{entropyineq} holds 
for any $ \phi \in H^1_0(\Omega)\cap L^\infty(\Omega)$ and for any $k >0$.
\end{lemma}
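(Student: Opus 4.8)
The plan is to argue by density: I would approximate an arbitrary $\phi \in H^1_0(\Omega)\cap L^\infty(\Omega)$ by smooth compactly supported functions, write the entropy inequality \eqref{entropyineq} for each of them (legitimate by Definition \ref{def:qlentsol}), and pass to the limit in the three terms. First I would fix $k>0$, set $M = \Vert\phi\Vert_\infty$ and $h = k + M + 1$, and construct a sequence $(\phi_m)_{m\in\mathbb{N}}\subset C_c^\infty(\Omega)$ with $\phi_m \to \phi$ in $H^1_0(\Omega)$ and almost everywhere (along a subsequence), and $\Vert\phi_m\Vert_\infty \le M+1$ for all $m$. Such a sequence is obtained by composing a standard $C_c^\infty$ approximation of $\phi$ with a fixed smooth function $\Theta$ that is the identity on $[-M,M]$, bounded by $M+1$, with $0\le\Theta'\le 1$ and $\Theta(0)=0$; the uniform $L^\infty$ bound is the only feature beyond the usual density of $C_c^\infty$ in $H^1_0$.

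The key observation, which localises everything to genuine $H^1_0$ functions and circumvents the fact that $u\notin H^1(\Omega)$, is the pointwise identity
$$
T_k(u-\phi_m) = T_k(T_h(u)-\phi_m)\quad\text{and}\quad T_k(u-\phi) = T_k(T_h(u)-\phi),
$$
valid because on $\{|u|\ge h\}$ both sides saturate to $\pm k$ (since $|\phi_m|,|\phi|\le M+1$ and $h-(M+1)=k$), while on $\{|u|<h\}$ one has $u = T_h(u)$. Since $T_h(u)\in H^1_0(\Omega)$ by hypothesis and $T_k$ is $1$-Lipschitz with $T_k(0)=0$, this shows in particular that $T_k(u-\phi)\in H^1_0(\Omega)$, so the left-hand side of \eqref{entropyineq} is well defined for the limit function $\phi$ (as in Remark \ref{rem:sensok}). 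Moreover $\phi_m\to\phi$ in $H^1_0$ forces $T_h(u)-\phi_m \to T_h(u)-\phi$ in $H^1_0$, whence $T_k(u-\phi_m)\rightharpoonup T_k(u-\phi)$ weakly in $H^1_0(\Omega)$: the gradients $T_k'(T_h(u)-\phi_m)\nabla(T_h(u)-\phi_m)$ are bounded in $L^2$, and the weak limit is identified using the strong $L^2$ convergence of $T_k(u-\phi_m)$.

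I would then pass to the limit term by term. For the right-hand side $\int_\Omega f\,T_k(u-\phi_m)\dx$ and the zero-order term $\int_\Omega b\,T_k(u-\phi_m)\dx$ I would invoke dominated convergence: $T_k(u-\phi_m)\to T_k(u-\phi)$ a.e. with $|T_k(\cdot)|\le k$, while $f\in L^1(\Omega)$ and $b\in L^1(\Omega)$ (the latter from $u\in\mathcal{S}_N(\Omega)$ together with \eqref{hypzeta}--\eqref{hypbeta}, by the reasoning following Definition \ref{def:qlweaksol} and used in Lemma \ref{lem:estimep}). The main obstacle is the diffusion term, which I would treat by exploiting that $\nabla T_k(u-\phi_m)$ is supported in $\{|u-\phi_m|<k\}\subset\{|u|<h\}$, where $\nabla u = \nabla T_h(u)$; hence
$$
\int_\Omega \Lambda\nabla u\cdot\nabla T_k(u-\phi_m)\dx = \int_\Omega \Lambda\nabla T_h(u)\cdot\nabla T_k(u-\phi_m)\dx,
$$
where $\Lambda\nabla T_h(u)\in L^2(\Omega)^N$ is a fixed function. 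The weak $L^2$ convergence $\nabla T_k(u-\phi_m)\rightharpoonup\nabla T_k(u-\phi)$ then allows me to pass to the limit by weak-strong duality, recovering $\int_\Omega \Lambda\nabla u\cdot\nabla T_k(u-\phi)\dx$. Combining the three limits preserves \eqref{entropyineq}, which is thereby established for every $\phi\in H^1_0(\Omega)\cap L^\infty(\Omega)$ and every $k>0$.

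I expect the delicate point to be the justification of the weak $H^1$ convergence of $T_k(u-\phi_m)$ and the identification of its limit, together with the measure-theoretic care in the truncation identity; these are precisely where the non-membership of $u$ in $H^1(\Omega)$ is absorbed by systematically replacing $u$ by $T_h(u)$.
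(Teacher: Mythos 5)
Your proposal is correct and follows essentially the same route as the paper's own proof: approximate $\phi$ by uniformly $L^\infty$-bounded functions in $C^\infty_c(\Omega)$ converging in $H^1_0(\Omega)$ and a.e., rewrite the diffusion term with $\nabla T_h(u)$ (with $h=k+\Vert\phi\Vert_\infty$ up to an irrelevant constant) since $\nabla T_k(u-\phi_m)$ vanishes where $|u|\ge h$, and pass to the limit using the weak $L^2$ convergence of $\nabla T_k(u-\phi_m)$ and dominated convergence for the zero-order and source terms. The only difference is that you spell out the truncation identity $T_k(u-\phi_m)=T_k(T_h(u)-\phi_m)$ and the identification of the weak limit, which the paper leaves implicit by reference to \cite[Lemma 3.3]{ben1995theory}.
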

\begin{proof}
The proof follows the technique of \cite[Lemma 3.3]{ben1995theory}. From a sequence  $(\phi_n)_{n \ge 0} \in C^\infty_c(\Omega)$ converging to $\phi$ in $H^1_0(\Omega)$, one constructs a sequence, again denoted by $(\phi_n)_{n \ge 0} \in C^\infty_c(\Omega)$ such that  $(\phi_n)_{n \ge 0} $ is uniformly bounded by $M$, converges almost everywhere in $\Omega$ to $\phi$ and $ | \nabla \phi_n |$ is dominated in $L^2(\Omega)$. Then $T_k( u  -\phi_n)$ converges in $L^s(\Omega)$ to  $T_k( u  -\phi)$ for all $s\in[1,+\infty)$, $\nabla T_k( u  -\phi_n)$ weakly converges in $L^2(\Omega)^N$ to $\nabla T_k( u  -\phi)$.
We then remark that \eqref{entropyineq} yields
\begin{equation*}
\int_{\Omega}\beta(  u )  T_k( u  -\phi_n)  \dx + \int_{\Omega} \Lambda \nabla T_h( u ) \cdot \nabla T_k(  u -\phi_n)  \dx  \le 
\int_{\Omega} (f\,  T_k( u  -\phi_n)+ F\cdot\nabla T_k( u  -\phi_n)) \dx,
\end{equation*}
with $h = k + M$ (recall that $\nabla T_k( u  -\phi_n) = 0$ on the set  $ u >h$). We then let $n\to\infty$ in the above inequality, which gives \eqref{entropyineq} for any $ \phi \in H^1_0(\Omega)\cap L^\infty(\Omega)$.
\end{proof}
\begin{lemma}[An entropy weak solution is a weak solution]\label{lem:entisweak}
We assume that  Assumptions \eqref{eq:hypomlambf} and \eqref{eq:hypbetazeta} hold.
Let us assume that $f \in L^1(\Omega)$. Let $(b,u)$ be an entropy solution in the sense of Definition \ref{def:qlentsol}. Then $(b,u)$ is a weak solution is the sense of Definition \ref{def:weaksol}.
\end{lemma}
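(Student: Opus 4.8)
The plan is to derive the weak equality \eqref{u:eq:pbcons} from the entropy inequality \eqref{entropyineq} by inserting, for a fixed test function $w\in\mathcal{T}_N(\Omega)$, a sequence of admissible test functions of the form $\phi_h = T_h(u) - w$ and letting $h\to+\infty$. First I would invoke Lemma \ref{lem:hunlinf}, so that \eqref{entropyineq} is available for every $\phi\in H^1_0(\Omega)\cap L^\infty(\Omega)$; since $w\in\mathcal{T}_N(\Omega)\subset W^{1,r}_0(\Omega)\cap C(\overline\Omega)$ for some $r>N$ and $T_h(u)\in H^1_0(\Omega)\cap L^\infty(\Omega)$, each $\phi_h$ is indeed admissible. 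Fixing $k>\|w\|_\infty$, I would write $u-\phi_h = (u - T_h(u)) + w$ and use the identity $\nabla(u-T_h(u)) = \nabla u\,\mathbf 1_{\{|u|>h\}}$ to split
\[
\int_\Omega \Lambda\nabla u\cdot\nabla T_k(u-\phi_h)\dx = \int_{B_h}\Lambda\nabla u\cdot\nabla u\,\mathbf 1_{\{|u|>h\}}\dx + \int_{B_h}\Lambda\nabla u\cdot\nabla w\dx,
\]
where $B_h = \{|u-T_h(u)+w|<k\}$.

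The first integral on the right is nonnegative and, because $\nabla T_k(u-\phi_h)$ is supported in $\{|u|<k+\|\phi_h\|_\infty\}$, it is finite thanks to $\nabla T_m(u)\in L^2(\Omega)$ (the $H^1_0$-regularity of the truncations). Dropping this nonnegative term from the left-hand side preserves the inequality, so \eqref{entropyineq} gives
\[
\int_\Omega b\,T_k(u-\phi_h)\dx + \int_{B_h}\Lambda\nabla u\cdot\nabla w\dx \le \int_\Omega f\,T_k(u-\phi_h)\dx.
\]
Next I would let $h\to+\infty$. Since $u$ is finite a.e., $u-T_h(u)\to 0$ a.e., hence $\mathbf 1_{B_h}\to 1$ a.e. and $T_k(u-\phi_h)\to T_k(w)=w$ a.e. (recall $k>\|w\|_\infty$). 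The terms carrying $b$ and $f$ converge by dominated convergence, using $b\in L^1(\Omega)$ (from $u\in S_N(\Omega)$ together with \eqref{eq:hypbetazeta}) and $f\in L^1(\Omega)$, both multiplied by the bounded factor $T_k(\cdot)$. For the diffusion term I would choose $q<\frac N{N-1}$ close enough to $\frac N{N-1}$ that its conjugate exponent $q'$ satisfies $q'\le r$ (possible because the conjugate of $r$ lies strictly below $\frac N{N-1}$); then Hölder's inequality with $\nabla u\in L^q(\Omega)$ and $\nabla w\in L^r(\Omega)\subset L^{q'}(\Omega)$ gives $\Lambda\nabla u\cdot\nabla w\in L^1(\Omega)$, and dominated convergence yields $\int_{B_h}\Lambda\nabla u\cdot\nabla w\to\int_\Omega\Lambda\nabla u\cdot\nabla w$.

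Passing to the limit produces $\int_\Omega(bw+\Lambda\nabla u\cdot\nabla w)\dx\le\int_\Omega fw\dx = \int_\Omega w\,\d f$. Applying the same argument to $-w\in\mathcal{T}_N(\Omega)$ reverses the inequality, so equality holds for every $w\in\mathcal{T}_N(\Omega)$, which is exactly \eqref{u:eq:pbcons}; together with $u\in S_N(\Omega)$ and the relations $b=\beta(v)$, $u=\zeta(v)$ inherited from the entropy solution, this shows that $(b,u)$ is a weak solution in the sense of Definition \ref{def:qlweaksol}. The main obstacle I anticipate is the careful treatment of the diffusion term: one must justify that dropping the positive truncated-energy piece $\int\Lambda\nabla u\cdot\nabla u\,\mathbf 1_{\{|u|>h\}}$ is legitimate (finiteness for each fixed $h$, via $\nabla T_m(u)\in L^2(\Omega)$) and that the surviving crossed term $\int_{B_h}\Lambda\nabla u\cdot\nabla w$ is dominated uniformly in $h$, which is precisely where the matching between the Lebesgue exponents of $\nabla u$ and $\nabla w$, guaranteed by the regularity $r>N$ of the functions in $\mathcal{T}_N(\Omega)$, becomes essential.
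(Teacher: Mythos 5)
Your proof is correct and follows essentially the same route as the paper: test with $\phi=T_h(u)-w$ (admissible by Lemma \ref{lem:hunlinf}), drop the nonnegative truncated-energy term, pass to the limit $h\to+\infty$ by dominated convergence, and recover equality by replacing $w$ with $-w$. The only cosmetic difference is that the paper works with $\psi\in C^\infty_c(\Omega)$ and concludes by density in $W^{1,r}_0(\Omega)$, whereas you test directly with $w\in\mathcal{T}_N(\Omega)$ and handle the crossed diffusion term by the H\"older pairing $\nabla u\in L^q$, $\nabla w\in L^{q'}$ — which is the same integrability matching the density step requires anyway.
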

\begin{proof}
The proof follows the technique of \cite[Corollary 4.3]{ben1995theory}.
Owing to Lemma \ref{lem:hunlinf}, for given $\psi \in C^\infty_c(\Omega)$, $k>\Vert \psi\Vert_{L^\infty(\Omega)}$ and $h>0$, we can let $\phi = T_h( u )-\psi\in H^1_0(\Omega)\cap L^\infty(\Omega)$ in  \eqref{entropyineq}. This gives $A_1(h) + A_2(h) \le A_3(h)$ with
\begin{multline*}
A_1(h) = \int_{\Omega} b   T_k( u  -T_h( u )+\psi)  \dx,\\
A_2(h) =  \int_{ \Omega} \Lambda \nabla  u  \cdot \nabla T_k(  u -T_h( u )+ \psi)  \dx,\\
A_3(h) = \int_{\Omega}  T_k( u  -T_h( u )+\psi) f(x) \dx.
\end{multline*}
We observe that, defining $$
\chi_h(x) = 1 \mbox{ if }| u  -T_h( u )+\psi|<k\mbox{ and }0\mbox{ otherwise},
$$
we have
$$
A_2(h) = \int_{\Omega } \chi_h\Lambda \nabla  u  \cdot \nabla (  u -T_h( u )+ \psi)  \dx =A_{21}(h)+A_{21}(h),
$$
with
\[
A_{21}(h) =\int_{\Omega } \chi_h\Lambda \nabla  u  \cdot \nabla (  u -T_h( u ))  \dx\mbox{ and }
A_{22}(h) = \int_{\Omega } \chi_h \Lambda \nabla  u  \cdot \nabla \psi  \dx.
\]
Using the fact that the function $s\mapsto s-T_h(s)$ is (nonstrictly) increasing, we get that $ \nabla  u  \cdot \nabla (  u -T_h( u ))\ge 0$ a.e., and therefore $A_{21}(h)\ge 0$. 
We therefore obtain
\begin{equation}\label{eq:entisweak}
 A_1(h) + A_{22}(h) \le A_3(h)+A_4(h)\hbox{ for all }h>0.
\end{equation}
We now study the limit of \eqref{eq:entisweak}, letting $h\to +\infty$.
Since $\chi_h(x)$
converges to $1$ for a.e. $x\in \Omega$ as $h\to+\infty$ (recall that $k>\Vert \psi\Vert_{L^\infty(\Omega)}$), by dominated convergence, we get
\[
 \lim_{h\to +\infty}A_{22}(h) = \int_{\Omega } \Lambda \nabla  u  \cdot \nabla \psi  \dx.
\]
Using the fact that the sequence $ (T_k(  u  - T_h( u ) + \psi))_{h \ge 0} $ is bounded in $H_0^1(\Omega)$ and that $ T_h( u ) $ converges to $ u $ almost everywhere in $\Omega$, we obtain the weak convergence in $H_0^1(\Omega)$ of the sequence  $ (T_k(  u  - T_h( u ) + \psi))_{h \ge 0} $ to  $\psi $. This leads to
\[
\lim_{h\to +\infty}A_1(h) = \int_{\Omega} b  \psi  \dx\mbox{ and }
\lim_{h\to +\infty}A_3(h) = \int_{\Omega} \psi f(x) \dx,
\]
which enables to conclude that
\begin{equation*}
\int_\Omega ( b  \psi +\Lambda\nabla  u \cdot\nabla \psi) \dx \le \int_\Omega f\psi\dx.
\end{equation*}
Replacing $\psi$ by $-\psi$, we get that the above inequality is in fact an equality, which provides \eqref{u:eq:pbcons} for $w = \psi$. We then get  \eqref{u:eq:pbcons} for any $w \in \mathcal{T}_N(\Omega)$ by the density of $C^\infty_c(\Omega)$ in any $W^{1,r}_0(\Omega)$ for $r\in[1,+\infty)$.
\end{proof}

\begin{lemma} \label{lem:estimuniq}
We assume that  Assumptions \eqref{eq:hypomlambf} and \eqref{eq:hypbetazeta} hold.
Let  $(b,u)$ be an entropy solution in the sense of Definition \ref{def:qlentsol}. Then, for all $k>0$, there holds
\begin{equation}\label{eq:estimuniq}
 \lim_{h\rightarrow +\infty}\int_{h-k<| u |\le h+k} |\nabla u|^2\dx  = 0.
\end{equation}
\end{lemma}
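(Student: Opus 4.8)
The plan is to test the entropy inequality \eqref{entropyineq} with a truncation of $u$ itself, exploiting the fact that Lemma~\ref{lem:hunlinf} allows test functions in $H^1_0(\Omega)\cap L^\infty(\Omega)$. Concretely, for $h>k$ I would take $\phi = T_{h-k}(u)$, which belongs to $H^1_0(\Omega)\cap L^\infty(\Omega)$ by the first item of Definition~\ref{def:qlentsol}, and use the truncation level $2k$ in \eqref{entropyineq}. The point of this choice is the elementary identity $\nabla T_{2k}(u-T_{h-k}(u)) = \nabla u\,1_{\{h-k<|u|<h+k\}}$, which follows from $\nabla(u-T_{h-k}(u)) = \nabla u\,1_{\{|u|>h-k\}}$ together with the fact that, on $\{|u|>h-k\}$, there holds $|u-T_{h-k}(u)|<2k$ if and only if $|u|<h+k$. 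Since $\nabla u=0$ a.e. on any level set of $u$ (so that the boundary set $\{|u|=h+k\}$ is irrelevant), the diffusion term then equals exactly $\int_{\{h-k<|u|\le h+k\}}\Lambda\nabla u\cdot\nabla u\dx$, which by the ellipticity bound \eqref{hyplambda} is at least $\underline{\lambda}\int_{\{h-k<|u|\le h+k\}}|\nabla u|^2\dx$, the quantity we want to control.

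It remains to handle the two other terms. For the reaction term I would argue by signs: on its support $\{|u|>h-k\}$ the function $T_{2k}(u-T_{h-k}(u))$ has the same sign as $u$, while $b=\beta(v)$ and $u=\zeta(v)$ have the same sign because $\beta,\zeta$ are nondecreasing and vanish at $0$ by Assumption~\eqref{eq:hypbetazeta}; hence $\int_\Omega b\,T_{2k}(u-T_{h-k}(u))\dx\ge 0$, and this nonnegative term may simply be discarded from the left-hand side. Combining with the previous step yields
\[
\underline{\lambda}\int_{\{h-k<|u|\le h+k\}}|\nabla u|^2\dx \le \int_\Omega f\,T_{2k}(u-T_{h-k}(u))\dx.
\]

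Finally, I would pass to the limit in the right-hand side. Since $|T_{2k}(u-T_{h-k}(u))|\le 2k$ and this function is supported in $\{|u|>h-k\}$, the right-hand side is bounded by $2k\int_{\{|u|>h-k\}}|f|\dx$. As $u\in S_N(\Omega)$ is finite a.e., the indicator $1_{\{|u|>h-k\}}$ tends to $0$ a.e. as $h\to+\infty$ and is dominated by $1$; since $f\in L^1(\Omega)$, dominated convergence gives $\int_{\{|u|>h-k\}}|f|\dx\to 0$. Dividing by $\underline{\lambda}$ then yields \eqref{eq:estimuniq}. I expect the only delicate point to be the bookkeeping in the gradient computation, namely checking that the pair (test function $T_{h-k}(u)$, truncation level $2k$) cuts out precisely the annulus $\{h-k<|u|\le h+k\}$; the sign of the reaction term and the $L^1$ dominated-convergence argument are then routine.
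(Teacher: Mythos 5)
Your proof is correct and follows essentially the same route as the paper: the paper tests \eqref{entropyineq} with $\phi=T_h(u)$ at truncation level $k$ (legitimised by Lemma \ref{lem:hunlinf}), obtains $\underline{\lambda}\int_{\{h<|u|\le h+k\}}|\nabla u|^2\dx\le k\int_{\{|u|>h\}}(|f|+|b|)\dx$, and recovers the annulus $\{h-k<|u|\le h+k\}$ by applying this estimate at the two levels $h$ and $h-k$. Your two variants --- taking $\phi=T_{h-k}(u)$ with truncation level $2k$ so as to carve out the annulus in one shot, and discarding the $b$-term via the sign argument $b\,T_{2k}(u-T_{h-k}(u))\ge 0$ (valid since $\beta,\zeta$ are nondecreasing and vanish at $0$) instead of bounding it by $k|b|$ with $b\in L^1(\Omega)$ and invoking dominated convergence --- are both sound and change nothing essential.
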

\begin{proof}
Letting, for given $k,h>0$, $\phi =T_h( u )$ in \eqref{entropyineq} (this is possible thanks to Lemma \ref{lem:hunlinf}), we get
\[
\int_\Omega  b\, T_k( u -T_h( u )) \dx + \int_\Omega \Lambda \nabla  u \cdot \nabla T_k( u -T_h( u )) \dx
\leq
\int_\Omega f T_k( u -T_h( u )) \dx.
\]
Using $\nabla  u  = \nabla T_k( u -T_h( u ))$ for a.e. $x$ such that $\nabla T_k( u -T_h( u ))(x)\neq 0$, we get, denoting by $E_h = \{x\in \Omega, h< |u(x)|\le h+k\}$, 
\[
 \underline{\lambda} \Vert \nabla u\Vert_{L^2(E_h)}^2
\leq
\int_\Omega (f-  b ) T_k( u -T_h( u ))\dx,
\]
which gives
\[
 \underline{\lambda} \Vert \nabla u\Vert_{L^2(E_h)}^2
\leq
\int_{E_h} k (|f| +|b| )\dx.
\]
By dominated convergence, since $\chi_{E_h}(x)$ tends to $0$ a.e. as $h\to\infty$, we get
\[
 \lim_{h\to +\infty}\int_{E_h} k (|f| +|b| )\dx = 0,
\]
and therefore we obtain
\begin{equation}\label{eq:estimhk}
\lim_{h\to +\infty}\underline{\lambda} \Vert \nabla u\Vert_{L^2(E_h)}^2 = 0.
\end{equation}
Note that \eqref{eq:estimhk} implies that
$$
\lim_{h\rightarrow +\infty}\int_{h<| u |\le h+k} |\nabla  u |^2\dx
=
\lim_{h\rightarrow +\infty}\int_{h-k<| u |\le h} |\nabla  u |^2\dx
= 0,
$$
hence providing \eqref{eq:estimuniq}.
\end{proof}

\begin{lemma}[\cite{dallagio1996}]\label{lem:ineqiseq} 
We assume that  Assumptions \eqref{eq:hypomlambf} and \eqref{eq:hypbetazeta} hold.
Let  $(b,u)$ be an entropy solution in the sense of Definition \ref{def:qlentsol}.
Then, for  for any $k >0$ and for any $ \phi \in C_c^\infty(\Omega)$, there holds
 \begin{equation}\label{d:entropyeq}
\int_{\Omega}\Big( b T_k( u  -\phi) + \Lambda(x) \nabla  u  \cdot \nabla T_k(  u -\phi) \Big) \dx = \int_{\Omega}  T_k( u  -\phi) f(x) \dx.
\end{equation}
\end{lemma}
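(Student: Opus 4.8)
The inequality ``$\le$'' in \eqref{d:entropyeq} is nothing but the entropy inequality \eqref{entropyineq}, so the whole task is to prove the reverse inequality ``$\ge$''. The idea is to insert into the entropy formulation a test function which, on the region where the truncation is active, is the reflection of $\phi$ across $u$, so that the three resulting integrals reproduce the left- and right-hand sides of \eqref{d:entropyeq} with the opposite sign. Concretely, I would fix $\phi\in C_c^\infty(\Omega)$ and $k>0$, choose a large parameter $h>k+\Vert\phi\Vert_\infty$, and test with $\phi^\star=2T_h(u)-\phi$. Since $T_h(u)\in H^1_0(\Omega)\cap L^\infty(\Omega)$, we have $\phi^\star\in H^1_0(\Omega)\cap L^\infty(\Omega)$, so Lemma \ref{lem:hunlinf} allows us to use $\phi^\star$ in \eqref{entropyineq}.

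The core is the pointwise analysis of $u-\phi^\star=(u-\phi)-2T_h(u)$. On the set $A=\{|u-\phi|<k\}$ one has $|u|<k+\Vert\phi\Vert_\infty<h$, hence $T_h(u)=u$ and $u-\phi^\star=-(u-\phi)$; therefore $T_k(u-\phi^\star)=-T_k(u-\phi)$ and $\nabla T_k(u-\phi^\star)=-\nabla T_k(u-\phi)$ on $A$, while $\nabla T_k(u-\phi)$ vanishes off $A$. The obstruction is that $\{|u-\phi^\star|<k\}$ is strictly larger than $A$: because $T_h$ saturates for $|u|>h$, the truncation reactivates on a parasitic set $B\subset\{2h-\kappa<|u|<2h+\kappa\}$, with $\kappa=k+\Vert\phi\Vert_\infty$, on which $\nabla T_h(u)=0$. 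Controlling $B$ is the main difficulty, and it is exactly what Lemma \ref{lem:estimuniq} is meant for.

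I would then let $h\to\infty$ in the three terms of \eqref{entropyineq} written for $\phi^\star$. For the zeroth-order terms I would split $\Omega=\{|u|\le h\}\cup\{|u|>h\}$: on $\{|u|\le h\}$ one has $T_k(u-\phi^\star)=-T_k(u-\phi)$, so by dominated convergence this part tends to $-\int_\Omega b\,T_k(u-\phi)\dx$ and $-\int_\Omega f\,T_k(u-\phi)\dx$ respectively (using $b,f\in L^1(\Omega)$ and $|T_k|\le k$), whereas the integrals over $\{|u|>h\}$ vanish by absolute continuity since $|\{|u|>h\}|\to0$. For the diffusion term, the contribution of $A$ equals $-\int_\Omega\Lambda\nabla u\cdot\nabla T_k(u-\phi)\dx$ independently of $h$, while the contribution of $B$ is bounded by $\overline{\lambda}\int_B(|\nabla u|^2+|\nabla u|\,|\nabla\phi|)\dx$, which tends to $0$ as $h\to\infty$ by Lemma \ref{lem:estimuniq} (applied to the window $\{2h-\kappa<|u|<2h+\kappa\}$) together with Cauchy--Schwarz for the cross term.

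Collecting these limits, the entropy inequality written for $\phi^\star$ becomes, after $h\to\infty$, $-\int_\Omega(b\,T_k(u-\phi)+\Lambda\nabla u\cdot\nabla T_k(u-\phi))\dx\le-\int_\Omega f\,T_k(u-\phi)\dx$, which is precisely the sought ``$\ge$'' inequality; together with \eqref{entropyineq} it gives the equality \eqref{d:entropyeq}. I expect the only genuine obstacle to be the control of the parasitic set $B$ on which the reflected test function is truncated anew: it is solely because the Dirichlet energy of $u$ on the shells $\{|u|\approx 2h\}$ vanishes (Lemma \ref{lem:estimuniq}) that this term disappears in the limit; the remaining passages are routine dominated-convergence arguments.
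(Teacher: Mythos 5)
Your proposal is correct and follows essentially the same route as the paper's proof: the paper also tests the entropy inequality with $2T_h(u)-\phi$ (justified by Lemma \ref{lem:hunlinf}), uses the identity $T_k(u-2T_h(u)+\phi)=T_k(-u+\phi)$ on $\{|u|\le 2h-M\}$, controls the residual diffusion term on the shell $\{2h-M<|u|<2h+M\}$ via Lemma \ref{lem:estimuniq}, and handles the zeroth-order terms by dominated convergence before letting $h\to\infty$ to obtain the reverse inequality.
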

\begin{proof}
Let $\phi = 2 T_h( u )-\widetilde{\phi}$, for given $h>0$ and $ \widetilde{\phi} \in C_c^\infty(\Omega)$. Let $M = k + \Vert \widetilde{\phi}\Vert_\infty$. For $h>M$, we have:
\begin{itemize}
 \item $T_k( u  -2 T_h( u )+\widetilde{\phi}) =  u  + 2h +\widetilde{\phi}$ for $| u  + 2h +\widetilde{\phi}|\le k$,
 \item $T_k( u  -2 T_h( u )+\widetilde{\phi}) = - u  +\widetilde{\phi}$ for $| - u  +\widetilde{\phi}|\le k$,
 \item $T_k( u  -2 T_h( u )+\widetilde{\phi}) =  u  - 2h +\widetilde{\phi}$ for $| u  - 2h +\widetilde{\phi}|\le k$,
 \item otherwise  $T_k( u  -2 T_h( u )+\widetilde{\phi}) = \pm k$,
\end{itemize}
and we also have
\begin{equation}\label{eq:eqinterieur}
  T_k( u  -2 T_h( u )+\widetilde{\phi})  = T_k( - u  +\widetilde{\phi}) \hbox{ if }| u |\le 2h-M.
\end{equation}
This proves that
\[
 \int_{\Omega} \Lambda(x) \nabla  u  \cdot \nabla T_k(  u -\phi)  \dx =  \int_{\Omega} \Lambda(x) \nabla  u  \cdot \nabla T_k( - u  +\widetilde{\phi})  \dx + R_h,
\]
with 
\[
 |R_h| \le  \overline{\lambda}\int_{2h-M <  | u |<2h+M  }  |\nabla  u |(|\nabla  u | + |\nabla\widetilde{\phi}|) \dx.
\]
Applying Lemma \eqref{lem:estimuniq}, we get that
\[
 \lim_{h\to\infty} R_h=0.
\]
Besides, we get from \eqref{eq:eqinterieur} that
\[
 |\int_{\Omega}  T_k( u  -\phi) f(x) \dx - \int_{\Omega}  T_k( - u  +\widetilde{\phi}) f(x) \dx | \le 2k\int_{| u |\ge 2h-M} |f(x) -b(x)|\dx.
\]
By dominated convergence, we get that
\[
 \lim_{h\to\infty} \int_{| u |\ge 2h-M}  |f(x) -b(x)| \dx = 0.
\]
We consider $\phi = 2 T_h( u )-\widetilde{\phi}$ in \eqref{entropyineq}  (this is possible owing to Lemma \ref{lem:hunlinf}), and we obtain by letting $h\to\infty$,
\[
  \int_{\Omega}\Big( b T_k(- u  +\widetilde{\phi}) +  \Lambda(x) \nabla  u  \cdot \nabla T_k( - u  +\widetilde{\phi})\Big)  \dx\le  \int_{\Omega}  T_k( - u  +\widetilde{\phi}) f(x) \dx,
\]
which, in addition to  \eqref{entropyineq} with $\phi = \widetilde{\phi}$, provides \eqref{d:entropyeq}.
\end{proof}

\begin{theorem}\label{thm:uniq}
We assume that  Assumptions \eqref{eq:hypomlambf} and \eqref{eq:hypbetazeta} hold. Then
there exists an unique entropy solution to Problem \eqref{eq:probcont}-\eqref{eq:probbound}-\eqref{eq:probcontbetazeta} in the sense of Definition \ref{def:qlentsol}.
\end{theorem}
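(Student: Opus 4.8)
Existence is already established: Lemma~\ref{lem:uepstendtoentsol} produces, as the limit of the regularised problems, a pair $(b,u)$ that is an entropy solution in the sense of Definition~\ref{def:qlentsol}. The whole difficulty is therefore uniqueness, for which the plan is the classical comparison argument for entropy solutions (in the spirit of \cite{ben1995theory}), adapted to the presence of the monotone-graph term $b$. Let $(b_1,u_1)$ and $(b_2,u_2)$ be two entropy solutions, with $b_i=\beta(v_i)$ and $u_i=\zeta(v_i)$, so that $b_i\in\mathcal{T}(u_i)$ by \eqref{eq:probcontgraph}.

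First I would extend the entropy equality \eqref{d:entropyeq} of Lemma~\ref{lem:ineqiseq} so that it remains valid for any test function $\phi\in H^1_0(\Omega)\cap L^\infty(\Omega)$, reproducing the approximation performed in Lemma~\ref{lem:hunlinf} (a uniformly bounded sequence of $C^\infty_c(\Omega)$ functions converging a.e.\ to $\phi$, with gradients dominated in $L^2(\Omega)$). This makes $T_h(u_2)$ and $T_h(u_1)$ admissible for every $h>0$, since they belong to $H^1_0(\Omega)\cap L^\infty(\Omega)$. I would then write \eqref{d:entropyeq} for the solution $u_1$ with $\phi=T_h(u_2)$ and for the solution $u_2$ with $\phi=T_h(u_1)$, and add the two identities. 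As $h\to+\infty$ one has $T_h(u_j)\to u_j$ a.e., the two source terms cancel because $T_k$ is odd, and the order-zero terms converge by dominated convergence (recall $b_i,f\in L^1(\Omega)$ and $\|T_k\|_\infty\le k$); the procedure thus yields
\[
\int_\Omega (b_1-b_2)\,T_k(u_1-u_2)\dx+\int_\Omega \Lambda\nabla(u_1-u_2)\cdot\nabla T_k(u_1-u_2)\dx\le 0.
\]

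To conclude, I would note that both integrals are nonnegative. The first one is nonnegative because $(b_1-b_2)(u_1-u_2)\ge 0$ by monotonicity of the graph $\mathcal{T}$ and because $T_k$ preserves the sign of its argument. The second one equals $\int_{\{|u_1-u_2|<k\}}\Lambda\nabla(u_1-u_2)\cdot\nabla(u_1-u_2)\dx\ge\underline{\lambda}\,\|\nabla T_k(u_1-u_2)\|_2^2$ by \eqref{hyplambda}. Hence both vanish; in particular $\nabla T_k(u_1-u_2)=0$ a.e.\ for every $k>0$. Since $T_k(u_1-u_2)\in W^{1,q}_0(\Omega)$ has a vanishing gradient it is zero, and letting $k\to+\infty$ gives $u_1=u_2$ a.e.\ in $\Omega$. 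Finally, by Lemma~\ref{lem:entisweak} each $(b_i,u_i)$ is a weak solution, so subtracting the two formulations \eqref{u:eq:pbcons} with $u_1=u_2$ leaves $\int_\Omega(b_1-b_2)w\dx=0$ for all $w\in\mathcal{T}_N(\Omega)$; the density of $C^\infty_c(\Omega)$ then forces $b_1=b_2$ a.e., which completes the uniqueness proof.

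The hard part will be the passage to the limit $h\to+\infty$ in the diffusion terms, where $\nabla T_k(u_i-T_h(u_j))$ carries the factor $T_h'(u_j)$ and is partly supported on the truncation shells $\{h-k<|u_j|\le h+k\}$. Controlling these shell contributions is exactly the role of the tail estimate \eqref{eq:estimuniq} of Lemma~\ref{lem:estimuniq}, which ensures $\int_{\{h-k<|u_j|\le h+k\}}|\nabla u_j|^2\dx\to0$; together with the observation that on $\{|u_i-T_h(u_j)|<k\}$ one has $|u_i|<h+k$, so that the relevant gradients coincide with those of suitable truncations $T_{h+k}(u_i)$ and thus lie in $L^2(\Omega)^N$, it legitimises the weak convergence of the truncated gradients and lets the diffusion integrals converge to their expected limits, after which the sign argument above closes the proof.
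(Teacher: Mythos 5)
Your proposal is correct and follows essentially the same route as the paper's proof: the B\'enilan-et-al.\ doubling argument with $\phi=T_h(u_j)$ as (extended) test functions, the tail estimate of Lemma~\ref{lem:estimuniq} to control the truncation shells in the diffusion terms, monotonicity of the graph for the zero-order term, and Lemma~\ref{lem:entisweak} to recover $b_1=b_2$ once $u_1=u_2$. The only cosmetic difference is that you route through the entropy equality \eqref{d:entropyeq} (which you must first extend to $H^1_0(\Omega)\cap L^\infty(\Omega)$ test functions) and compute exact limits of the zero-order and source terms, whereas the paper works directly with the inequality \eqref{entropyineq} via Lemma~\ref{lem:hunlinf} and only bounds the relevant $\liminf$/$\limsup$; both paths lead to the same final inequality and conclusion.
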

\begin{proof}
The proof follows that of \cite[Theorem 5.1]{ben1995theory}.
Let  $(b_1,u_1)$ and $(b_2,u_2)$ be two entropy solutions in the sense of Definition \ref{def:qlentsol}. We let, for $h,k>0$ be given, $\phi = T_h(u_i)$ in  \eqref{entropyineq} written for $(b_j,z_j)$  (one more time, this is possible thanks to Lemma \ref{lem:hunlinf}) and we add the resulting inequalities. We  get $A_1(h) + A_3(h) \le A_2(h)$ with
\begin{multline*}
A_1(h) = \int_{\Omega}( b_1  T_k( u_1 -T_h( u_2))+  b_2  T_k( u_2 -T_h( u_2)) )\dx,\\
A_2(h) = \int_{\Omega}  (T_k( u_1 -T_h( u_2))+T_k( u_2 -T_h( u_1)))  f  \dx,\\
A_3(h) =  \int_{ \Omega} (\Lambda \nabla  u_1 \cdot \nabla T_k(  u_1-T_h( u_2))+  \Lambda \nabla  u_2 \cdot \nabla T_k(  u_2-T_h( u_1)) )\dx.
\end{multline*}
Let us first study $A_1(h)$. We can write
\begin{align*}
&
\int_\Omega   b_1 T_k( u_1-T_h( u_2))\dx
\\ 
&
\qquad
=
\int_{| u_2|\leq h}   b_1 T_k( u_1- u_2)\dx
+
\int_{h<| u_2|}   b_1 T_k( u_1-T_h( u_2))\dx
\\
&
\qquad
=
\int_{| u_2|\leq h,| u_1|\leq h}   b_1 
T_k( u_1- u_2)\dx
\\
&
\qquad
\qquad
+
\int_{| u_2|\leq h,h<| u_1|}   b_1 
T_k( u_1- u_2)\dx
+
\int_{h<| u_2|}   b_1 T_k( u_1-T_h( u_2))\dx
\\
&
\qquad
\geq
\int_{| u_2|\leq h,| u_1|\leq h}   b_1 
T_k( u_1- u_2)\dx
-
\int_{| u_2|\leq h,h<| u_1|} |  b_1 |k \dx
-
\int_{h<| u_2|} |  b_1 |k \dx
\\
&
\qquad
\geq
\int_{| u_2|\leq h,| u_1|\leq h}   b_1 
T_k( u_1- u_2)\dx
-
\int_{h<| u_1|} |  b_1 |k \dx
-
\int_{h<| u_2|} |  b_1 |k \dx.
\end{align*}
Hence, applying the preceding computation to $(b_1,u_1)$ and $(b_2,u_2)$, and defining 
$$
\chi_h(x) = 1 \mbox{ if }(h<| u_1|,h<| u_2|)\mbox{ and }0\mbox{ otherwise},
$$
we get
\begin{align*}
&
\int_\Omega  ( b_1 T_k( u_1-T_h( u_2))
+
   b_2 T_k( u_2-T_h( u_1)))\dx
\\
&
\qquad
\geq
\int_{| u_2|\leq h,| u_1|\leq h} (  b_1 -  b_2 )
T_k( u_1- u_2)\dx
-
\int_{\Omega}\chi_h (|  b_1 |+|  b_2 |)k \dx.
\end{align*}
Using $ ( b_1- b_2)T_k( u_1- u_2)\ge 0$ which is a consequence of Assumption \eqref{eq:hypbetazeta}, we conclude that
\[
 A_1(h) \ge -\int_{\Omega}\chi_h (|  b_1 |+|  b_2 |)k \dx,
\]
which shows, by dominated convergence, that
\begin{equation}\label{eq:limaunh}
\liminf_{h\to+\infty}A_1(h)\ge 0. 
\end{equation}
Similar computations show that
\[
 A_2(h) \le 2 \int_{\Omega}\chi_h |f|\, k \dx,
\]
and therefore that
\begin{equation}\label{eq:limadeuxh}
\limsup_{h\to+\infty}A_2(h)\le 0.
\end{equation}

Following the analysis in \cite{ben1995theory}, we obtain that
\begin{equation}\label{eq:limatroish}
\liminf_{h\to +\infty}A_3(h) \geq\liminf_{h\to +\infty}\int_{|u_2|\leq h,|u_1|\leq h} \nabla(u_1-u_2) \nabla T_k(u_1-u_2)\dx \ge \int_{\Omega} \nabla(u_1-u_2) \nabla T_k(u_1-u_2)\dx.
\end{equation}
Gathering \eqref{eq:limaunh}, \eqref{eq:limadeuxh} and  \eqref{eq:limatroish}, we conclude that
\[
  \int_{\Omega} \nabla(u_1-u_2) \nabla T_k(u_1-u_2)\dx =0.
\]
Since the above relation holds for all $k>0$, we thus obtain that $\nabla(u_1-u_2) =0$ a.e. Using that $u_1$ and $u_2$ belong to $\mathcal{S}_N(\Omega)$, we conclude that $u_1=u_2$ a.e.

Applying Lemma \ref{lem:entisweak} for $(b_1,u_1)$ and $(b_2,u_2)$, we get that, for all $\phi\in C^\infty_c(\Omega)$,
\[
 \int_\Omega b_1\phi\dx =\int_\Omega b_2\phi\dx,
\]
which implies that $b_1 = b_2$ a.e. and concludes the proof of the uniqueness of the entropy solution.
\end{proof}

\bibliographystyle{abbrv}
\bibliography{linnonlin_bib}
\end{document}